\numberwithin{equation}{section}
\newtheorem{defi}{Definition}[section]
\newtheorem{lemm}{Lemma}[section]
\newtheorem{coro}{Corollary}[section]
\newtheorem{theo}{Theorem}[section]
\newtheorem{prop}{Proposition}[section]
\theoremstyle{remark}
\newtheorem{rema}{Remark}[section]
\newcommand{\dd}{\textnormal{d}}
\DeclareMathOperator{\detd}{\textnormal{\textbf{det}}}
\newcommand{\R}{\mathbb{R}}
\newcommand{\RP}{\mathbb{R}\textnormal{P}}
\newcommand{\T}{\mathcal{T}}
\newcommand{\E}{\mathcal{E}}
\newcommand{\Q}{\mathcal{Q}}
\newcommand\rotpi{\rotatebox[origin=c]{180}{$\Pi$}}
\newcommand{\Weyl}[4]{W_{#1#2\phantom{#3}#4}^{\phantom{#1#2}#3}}
\newcommand{\Riem}[4]{R_{#1#2\phantom{#3}#4}^{\phantom{#1#2}#3}}
\newcommand{\simeqd}{\mathrel{\rotatebox[origin=c]{-90}{$\simeq$}}}
\newcommand{\veq}{\rotatebox{90}{$\,=$}}
\title{Asymptotic symmetries of projectively compact order one Einstein manifolds} 
\author[1]{Jack Borthwick\thanks{\href{mailto:jack.borthwick@mcgill.ca}{jack.borthwick@mcgill.ca}}}
\author[2]{Yannick Herfray\thanks{\href{mailto:yannick.herfray@univ-tours.fr}{yannick.herfray@univ-tours.fr}}}
\affil[1]{Department of Mathematics and Statistics, McGill University}
\affil[2]{Institut Denis Poisson, Université de Tours}
\date{\today}
\begin{document}
\maketitle
{\let\thefootnote\relax\footnotetext{Keywords: Projective compactification, projective differential geometry, holonomy reductions, curved orbit decompositions }}
\begin{abstract}
We show that the boundary of a projectively compact Einstein manifold of dimension $n$ can be extended by a line bundle naturally constructed from the projective compactification. This extended boundary is such that its automorphisms can be identified with asymptotic symmetries of the compactification. The construction is motivated by the investigation of a new curved orbit decomposition for a $n+1$ dimensional manifold which we prove results in a line bundle over a projectively compact order one Einstein manifolds.
\end{abstract}

\tableofcontents

\section{Introduction}
In the study of objects living on non-compact manifolds it is often enlightening to try to make sense of their asymptotics. Curved orbit decompositions arising from holonomy reductions of (normal) Cartan geometries, as first studied in full generality in~\cite{cap_holonomy_2014}, provide a fascinating framework in which one can seemingly make sense of the asymptotics of geometric structures themselves. In many interesting cases, the orbit decomposition has a large open orbit $\mathcal{O}$ equipped with a certain geometric structure, e.g. an Einstein metric. Its complement can then be viewed as a boundary \enquote{at infinity} and splits into pieces endowed with other geometries that one may interpret as \enquote{limits} of the structure of $\mathcal{O}$.

Defining and studying the asymptotics of pseudo-Riemannian manifolds $(M,g)$ is of particular interest due to their importance in physics. For instance, conformal compactification is key for the formulation of a notion of isolated system in General Relativity~\cite{geroch_asymptotic_1977,penrose_spinors_1984,frauendiener_conformal_2004,valiente_kroon_conformal_2016}. It is also a way to approach the study of the asymptotic behaviour to partial differential equations on non-compact pseudo-Riemannian manifolds, see for instance~\cite{gover_poincare-einstein_2015,curry_introduction_2018,Mason:2004wg,AIF_2016__66_3_1175_0,borthwick2023peeling,mason_nicolas_2009} in the conformal case or~\cite{Borthwick_Proca_Projective} in the projective case, for examples of ideas and work in this direction.

In~\cite{cap_projective_2012,cap_holonomy_2014,Cap:2014aa,Cap:2014ab,cap_projective_2016-1,Flood:2018aa}, a specific type of holonomy reduction of \emph{projective} geometries is studied that gives rise to an Einstein metric on the open orbit, suggesting that projective geometry could play an important role in understanding their asymptotics. 
This is intimately related to the notion of projectively compact metric of order $\alpha$ introduced by Čap--Gover in~\cite{Cap:2014ab}. 
The order of a projectively compact metric is related to its volume asymptotics, and can a priori take any positive real value. For Einstein metrics, the cases $\alpha =1$ (vanishing scalar curvature) and $\alpha=2$ (non-vanishing scalar curvature) are the most relevant. Both cases were thoroughly  investigated  by Čap--Gover in \cite{Cap:2014ab}. In particular, it is shown in this reference that, for projectively compact Einstein metrics of order $1$, this notion coincides with asymptotic flatness at spatial infinity as introduced by Ashtekar and Romano in \cite{ashtekar_spatial_1992}, however with the restriction that only spacetime with vanishing mass aspect can be obtained in this way.

Asymptotic flatness at spatial infinity is by now a classical subject in general relativity, see e.g. \cite{beig_einsteins_1982,compere_relaxing_2011,mohamed_comparison_2021,compere_asymptotic_2023}. A physically important feature of these spacetimes is their asymptotic symmetry group: the -- infinite dimensional -- Spi group of \cite{ashtekar_spatial_1992}. The identification of spatial infinity with projective compactifications of order 1 however presents us with a conundrum: at null infinity, the group of asymptotic symmetry (the BMS group of \cite{bondi_gravitational_1962,sachs_gravitational_1962}) can be identified with the automorphisms of the boundary geometry \cite{geroch_asymptotic_1977,ashtekar_geometry_2015,duval_conformal_2014} and act on the induced Cartan geometry \cite{herfray_asymptotic_2020,herfray_tractor_2022,Herfray:2021qmp} encoding the radiative data. On the other hand the boundary geometry of projective compactification of order 1 of an Einstein metric is again a normal projective geometry with a holonomy reduction corresponding to projective compactification of order $2$ of an Einstein metric~\cite{Cap:2014ab} and thus necessarily has a finite dimensional group of automorphisms. So how exactly does the Spi group of Ashtekar--Romano plays a role in the projective compactification picture?

In this work we wish to resolve this tension by introducing an extended boundary at spatial infinity. In doing so, we follow the footsteps of Ashtekar--Hansen \cite{AhstekarHansen78}. We will show that this idea fits perfectly with, and extends, the notion of projective compactification. The properties of this boundary parallel exactly that of null infinity: its geometry is not rigid and the corresponding group of automorphisms 1) coincides with the Spi group, 2) acts on a space of induced Cartan geometry encoding some asymptotic gravitational data. We also obtain a similar structure at time like infinity.
In order to introduce and motivate this step, we will study a closely related curved orbit decomposition of Cartan geometries \cite{cap_holonomy_2014} generalising the work of Cap--Gover \cite{Cap:2014ab}.
The majority of this article will in fact be dedicated to the discussion of the curved version of a flat model discussed in great detail by Figueroa-O'Farrill and collaborators in \cite{Figueroa-OFarrill:2021sxz}. This homogeneous model generalises the projectively compactified 4D model $\mathbb{R}\text{P}^{4}$ to a 5D model $\mathbb{R}\text{P}^{5}\setminus \{pt\} \simeq \mathbb{R} \times \mathbb{R}\text{P}^{4}$ fibering over the usual projective compactification.
\begin{equation}\label{Introduction: Homogeneous model}
\begin{array}{cccccc}
\mathbb{R}\text{P}^{5}\setminus \{pt\} &\simeq& \left( \mathbb{R} \times M^4\right) &\sqcup&  \left(\textrm{Ti}^4 \sqcup \mathscr{I}^3 \sqcup  \textrm{Spi}^4\right)\\[1em]
\downarrow && \downarrow && \downarrow\\[1em]
\mathbb{R}\text{P}^{4}  &\simeq& M^4 &\sqcup&  \left( H^{3} \sqcup  S^{2} \sqcup  dS^{3}\right).
\end{array}
\end{equation}
This orbit decomposition will be discussed in more detail (and generic dimension) in the core of this article, however we summarise here its essential features: while the action of the Poincaré group has a trivial action along the fibres\footnote{Here $M^4 = \frac{ISO(1,3)}{SO(3,1)}$ is the usual Minkowski spacetime.} $\mathbb{R} \times M^4 \to M^4$ over Minkowski space, it has a non-trivial action over the extended boundary $\textrm{Ti}^4 \sqcup \mathscr{I}^3 \sqcup  \textrm{Spi}^4$:
\begin{equation}\label{Introduction: Boundary Homogeneous space}
\begin{array}{ccccccc}
\textrm{Ti}^4   &\simeq& \frac{\text{ISO}(1,3)}{\mathbb{R}^{3}\rtimes \text{SO}(3)} &\to& H^{3} &\simeq& \frac{\text{SO}(1,3)}{\text{SO}(3)}  \\[1em]

\mathscr{I}^3 &\simeq& \frac{\text{ISO}(1,3)}{\mathbb{R}^{3} \rtimes (\mathbb{R}^* \times \text{ISO}(2)) } &\to& S^{2} &\simeq& \frac{\text{SO}(1,3)}{\mathbb{R}^{2} \rtimes (\mathbb{R}^* \times \text{SO}(2) )} \\[1em]

 \textrm{Spi}^4  &\simeq& \frac{\text{ISO}(1,3)}{ \mathbb{R}^{3} \rtimes \text{SO}(1,2)} &\to& dS^{3} &\simeq& \frac{\text{SO}(1,3)}{\text{SO}(1,2)}.

\end{array}
\end{equation}
This points to the possibility of a non trivial geometrical extension of the boundary geometry of projectively compactified order 1 Einstein metrics. We shall prove in this article that this intuition is correct.

The article is organised as follows. In Section \ref{section: Projective tractors and the homogeneous model} we recall the elements of projective tractor calculus that we will need and discuss in more details the homogeneous model \eqref{Introduction: Homogeneous model}. In Section \ref{Section: curved orbit decompostion} we detail the different curved orbits of the corresponding holonomy reduction, under suitable assumptions this results in a line bundle $M\to \Q$. In Section \ref{section: Geometry of the open orbit O} we prove that the open curved orbit $\mathcal{O}$ is of the form $\mathbb{R}\times \Q_{\mathcal{O}}$ where $\Q_{\mathcal{O}}$ is canonically endowed with an Einstein metric. We then prove, in Section \ref{section: Geometry of Q}, that as a submanifold of $\Q$, this is projectively compact of order $1$ with boundary $\mathcal{H}$. In Section \ref{section: The geometry of Sigma to H} we investigate the induced geometry of the extended boundary $\Sigma \to \mathcal{H}$, first as a submanifold of $M\to \Q$ and then a natural extension of $\mathcal{H} \subset \Q$. Finally, in Section \ref{section: Asymptotic symmetries of projectively compact Einstein manifolds}, we discuss asymptotic symmetries and their action on the extended boundary.

\section*{Acknowledgements}
The first author gratefully acknowledges that this research is supported by NSERC Discovery Grant 105490-2018.

\section{Projective tractors and the homogeneous model}\label{section: Projective tractors and the homogeneous model}

It should be understood that all affine connections on the tangent bundle are torsion-free.

We make extensive use of the Penrose abstract index notation~\cite{penrose_spinors_1984}. In index notation, tractor indices (see below) will be denoted by capital Latin letters, $A,B,C\dots,$. 

The signature of a bilinear form will be written $(q,p)$: \enquote{$q$ `$-$'s and $p$ `$+$'s}.

\subsection{Projective tractor calculus}\label{Projective Tractor Calculus}

We here review the elements of projective tractor calculus that we will need in the rest of the article.

Let $M$ be a manifold of dimension $n+1$ endowed with an equivalence class of projectively equivalent torsion-free connections $[\nabla]$. It is a standard result\footnote{See, for instance,~\cite[Proposition 7.2]{Kobayashi:1995aa}} that two torsion-free affine connections $\hat{\nabla}$ and $\nabla$ are projectively equivalent if and only if there is a 1-form $\Upsilon_a$ such that for any vector field $\xi^b$:
\[\hat{\nabla}_a\xi^b=\nabla_a\xi^b + \Upsilon_a\xi^b +\Upsilon_c\xi^c\delta_a^b. \]
We will use $\hat{\nabla}=\nabla + \Upsilon$ as shorthand for this condition.

We denote by $\mathcal{E}(w)$ the associated vector bundle to the frame bundle $P^1(M)$ determined by the $1$-dimensional representation of $GL_{n+1}(\R)$:
\[ A \mapsto \lvert \det {A}\rvert ^{\frac{w}{n+2}}.\]
Sections of this bundle will be called \emph{projective densities} of weight $w$. Their definition is so that for any $\sigma \in \Gamma(\mathcal{E}(w))$ under the projective change of connection $\hat{\nabla} = \nabla + \Upsilon$, \[\hat{\nabla}\sigma = \nabla \sigma + w\Upsilon \sigma. \]
For any vector bundle $\mathcal{B}$ with base $M$, we will write: $\mathcal{B}(w)=\mathcal{B}\otimes \mathcal{E}(w)$ and we will say that sections of $\mathcal{B}(w)$ are of weight $w$.

Let $G=\textnormal{PGL}(n+2,\R)$ and $H=(\R^{n+1})^*\rtimes GL(n+1,\R)$ the isotropy subgroup of a given line. The class $[\nabla]$ determines a reduction of the second-order frame bundle $P^2(M)$ to a $H$-principle bundle $P$, on which there is a unique \emph{normal} Cartan connection $\omega$~\cite{Cartan:1924aa, Kobayashi:1995aa}; this is a normal projective geometry modeled on projective geometry as described in~\cite{Sharpe:1997aa}.

The \emph{standard (projective) tractor bundle}, $\T$ can be defined as the associated vector bundle to $P$ determined by the restriction to $H$ of the $(n+2)$-dimensional representation of $G$ given by:  $A\mapsto \lvert\det A\rvert^{-\frac{1}{n+2}}A$. The Cartan connection $\omega$ induces a linear connection $\nabla^{\mathcal{T}}$ on $\mathcal{T}$.

The bundle $\mathcal{T}$ fits into a canonical short exact sequence of vector bundles:
\begin{center}
\begin{tikzcd} 0 \arrow[r] & \mathcal{E}(-1) \arrow[r,"X"] & \arrow[l,dotted,bend left = 45, "Y"] \mathcal{T} \arrow[r,"Z"] & TM(-1) \arrow[r] \arrow[l,dotted,bend left=45,"W"] &0. \end{tikzcd}
\end{center}
A choice of connection in the projective class $[\nabla]$ provides a non-canonical isomorphism $\mathcal{T}\simeq \mathcal{E}(-1)\oplus TM(-1)$, described above by the non-canonical maps $Y$ and $W$. When thinking of these maps as sections of $\mathcal{T}^*\otimes\mathcal{E}(-1)$ and $T^*M(1)\otimes\mathcal{T}$ respectively, they transform under change of connection $\hat{\nabla}=\nabla + \Upsilon$ according to:
\begin{equation}\hat{Y}_A=Y_A - \Upsilon_aZ^a_A, \quad \widehat{W}^A_a=W^A_a + \Upsilon_a X^A. \end{equation}
Fixing a choice of connection $\nabla$ in the projective class, sections of the standard tractor bundle will be written:
\[ T^A=\xi^aW_a^A + \rho X^A,\]
similar expressions will be used with sections of tensor powers of the tractor bundle.

The dual tractor bundle $\T^*$ is usually identified with the first jet prolongation $J^1\E(1)$ of the weight $1$ projective density bundle~\cite{Bailey:1994aa}. This description has the advantage of being more direct than the principal bundle approach outlined above, and is independent of the projective structure. In addition, it allows for straightforward comparisons of tractors on different manifolds; in particular, tractors in the \enquote{bulk} and on distinguished hypersurfaces.

We also recall from~\cite{Bailey:1994aa} the standard decomposition of the curvature tensor $R_{ab\phantom{c}d}^{\phantom{ab}c}$:
\[ R_{ab\phantom{c}d}^{\phantom{ab}c}=W_{ab\phantom{c}d}^{\phantom{ab}c} + 2\delta^c_{[a}P_{b]d} + \beta_{ab}\delta^c_d,\]
where the Weyl tensor $W$ is trace-free and $P_{ab}$ is the projective Schouten tensor. It is easily verified that:
\[nP_{ab}= R_{ab}+\beta_{ab},\quad \beta_{ab}=-\frac{2}{n+2}R_{[ab]}=-P_{[ab]},\]
where $R_{ab}=R_{da\phantom{d}b}^{\phantom{da}d}$ is the Ricci tensor. The tensor $\beta_{ab}$ can be thought of as the curvature tensor of the density bundle $\mathcal{E}(1)$.

A connection $\nabla$ is said to be \emph{special} if it preserves a nowhere vanishing density $\sigma$. In this case $\beta_{ab}=0$ (in particular all density bundles are flat) and we say that $\nabla$ is the \emph{scale} determined by $\sigma$. Even when it does not preserve a nowhere vanishing density, we will refer to a choice of connection in the class as a choice of scale.

Having fixed a projective connection $\nabla$ in the class, and split the short-exact sequence, the action of the normal Cartan connection can be summarised conveniently by the relations:
\begin{equation}\label{TractorConnection} \nabla_a X^A= W^A_a, \quad \nabla_a W^B_b=-P_{ab}X^B,\quad \nabla_a Y_A=P_{ab}Z^b_A, \quad \nabla_a Z^b_B=-\delta_{a}^bY_B. \end{equation}

The tractor curvature, defined by, the identity $\Omega_{ab\phantom{C}D}^{\phantom{ab}C}T^D= 2 \nabla_{[a}\nabla_{b]}T^C$, is known to be given in an arbitrary scale by:
\begin{equation}\label{TractorCurvature} \Omega_{ab\phantom{C}D}^{\phantom{ab}C}=\Weyl{a}{b}{c}{d}W^C_cZ^d_D -Y_{abd}X_CZ^d_D \end{equation}
Where: $Y_{abd}=2\nabla_{[a}P_{b]d}$ is the projective Cotton tensor.

A projective class $[\nabla]$ is said to be \emph{metric}, if it admits a solution $\zeta^{ab} \in \Gamma(S^2(T^*M)(-2))$ to the Mike\v{s}-Sinjukov metrisability equation~\cite{Mikes1996,Sinjukov:1979aa}:
\[ \textnormal{trace-free}\left( \nabla_c\zeta^{ab} \right)=0, \]
or equivalently $\nabla_c \zeta^{ab} = 2 \delta^{(a}_c \lambda^{b)}$ for some $\lambda^a$.
Solutions of this equation have been shown~\cite{Eastwood:2008aa,Flood:2018aa} to be in one-to-one correspondence with tractors $H^{AB}$ that satisfy:
\begin{equation} \label{TractorMetrisability}\nabla_c H^{AB} + \frac{2}{n+1}X^{(A}W_{cE\phantom{B}F}^{\phantom{cE}B)}H^{EF}=0, \end{equation}
where $W_{cE\phantom{B}F}^{\phantom{cE}B}=Z^c_C\Omega_{ce\phantom{B}F}^{\phantom{ce}B}$ and $\Omega_{ce\phantom{B}F}^{\phantom{ce}B}$ is the tractor curvature.
In this case $H^{AB}$ is of the form:
\[H^{AB}= \zeta^{ab}W^A_aW^B_b + \lambda^aX^{(A}W^{B)}_b+ \tau X^AX^B.\]
with 
\begin{align}\label{H decomposition}
\lambda^a &= - \frac{2}{n+2}\nabla_c\zeta^{ca},& \tau&=  \frac{\nabla_a\nabla_b \zeta^{ab}}{(n+1)(n+2)}+\frac{1}{n+1}P_{ab}\zeta^{ab}.
\end{align}

If $\nabla_c H^{AB}=0$, then it can be shown that Eq.~\eqref{TractorMetrisability} is automatically satisfied; these solutions are said to be \emph{normal}. This is equivalent to
\begin{align}\label{DH=0}
\nabla_c \zeta^{ab} - 2 \delta^{(a}_c \lambda^{b)} &=0, &
\nabla_c \lambda^a + \delta^a_c \tau - P_{cb}\zeta^{ba} &=0,
\end{align}
(and imply \eqref{H decomposition}). Normal solutions are known to correspond to Einstein metrics~\cite{Cap:2014aa,Armstrong2008}. If one introduces\footnote{$\epsilon^2$ is the section that realises the identification $(\Lambda^{n+1}TM)^{\otimes 2} \simeq \mathcal{E}(2(n+2))$; if $M$ is orientable, then it can be thought of as the square of the volume form.}
\begin{equation*}
\detd : \begin{array}{ccc}
\mathcal{E}^{ab}(w) & \longrightarrow &\mathcal{E}((n+1)(w+2) + 2) \\
h^{ab} &\longmapsto& \epsilon^2_{a_1 ... a_{n+1}b_1... b_{n+1}} h^{a_1b_1}... h^{a_{n+1}b_{n+1}}
\end{array}
\end{equation*}
and\footnote{We recall from \cite{Flood:2018aa} that the projective tractor volume form is defined by \begin{equation*} \epsilon^2_{A_0 A_1 ... A_{n+1}B_1 B_1... B_{n+1}} :=  \epsilon^2_{a_1 ... a_{n+1}b_1... b_{n+1}} Y_{[A_0} Z_{A_1}^{a_1} ...  Z_{A_{n+1}]}^{a_{n+1}} Y_{[B_0} Z_{B_1}^{b_1} ...  Z_{B_{n+1}]}^{b_{n+1}}
	\end{equation*}}
\begin{equation*}
	\det : \begin{array}{ccl}
		\mathcal{E}^{AB} & \longrightarrow & C^{\infty}(M) \\
		H^{AB} &\longmapsto& \epsilon^2_{A_0 A_1 ... A_{n+1}B_1 B_1... B_{n+1}} H^{A_0B_0}... H^{A_{n+1}B_{n+1}}
	\end{array}
\end{equation*}
then $\detd(\zeta)\zeta^{ab} \in \mathcal{E}^{ab}$ is an Einstein metric with scalar curvature $R= n(n+1) \det(H)$. 

It will also be useful to keep in mind the following algebraic result (which we take from \cite{Flood:2018aa}): \begin{lemm}If $H^{AB}$ is invertible of inverse $\Phi_{AB}$ then $X^A X^B \Phi_{AB} =\det(H) \detd(\zeta)$ and if $\det(H)=0$ then the kernel is generated by $D_{A}\tau$ with $\tau^2 = \frac{\detd(\zeta)}{n+1}$.
\end{lemm}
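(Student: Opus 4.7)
My plan is algebraic and revolves around the tractor adjugate. Fix a scale so that $H^{AB}$ splits as
\[ H^{AB} = \tau X^AX^B + 2\lambda^a W_a^{(A}X^{B)} + \zeta^{ab}W_a^AW_b^B, \]
and introduce
\[ \mathrm{adj}(H)_{AB} := \tfrac{1}{(n+1)!}\,\epsilon^2_{A C_1 \cdots C_{n+1}\,B D_1 \cdots D_{n+1}}\,H^{C_1 D_1}\cdots H^{C_{n+1} D_{n+1}}, \]
which by construction satisfies $H^{AC}\mathrm{adj}(H)_{CB}=\det(H)\,\delta^A_B$, so that $\Phi_{AB}=\mathrm{adj}(H)_{AB}/\det(H)$ whenever $H$ is invertible.

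For the first identity I would compute $X^AX^B\,\mathrm{adj}(H)_{AB}$ directly in the splitting. Because $X^A Y_A=1$ and $X^A Z^a_A=0$, each contraction of $X^A$ into one of the two antisymmetrised $\epsilon^2$-blocks selects precisely the $Y$-slot of that block, and the remaining $(n+1)$ copies of $H$ are then paired against $Z$-legs, so that only the $\zeta^{ab}W_a^AW_b^B$ piece of each $H$ contributes. The tractorial contraction thus collapses to the density contraction $\epsilon^2_{a_1\cdots a_{n+1} b_1 \cdots b_{n+1}}\,\zeta^{a_1 b_1}\cdots\zeta^{a_{n+1} b_{n+1}}=\detd(\zeta)$, and combining with the defining relation for $\mathrm{adj}(H)$ yields the stated identity up to the combinatorial constants implicit in the normalisations of $\det$ and $\detd$.

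For the degenerate case, $H^{AC}\mathrm{adj}(H)_{CB}=0$, so every nonzero column of $\mathrm{adj}(H)$ lies in $\ker H$. Assuming the generic rank-$(n+1)$ case, $\ker H$ is one-dimensional and, by the computation just made, $X^B\mathrm{adj}(H)_{BA}$ is a canonical nonzero generator whenever $\detd(\zeta)\neq 0$. To identify this generator with $D_A\tau$, I would expand $\nabla_c H^{AB}=0$ in the splitting and read off its $X^AX^B$-coefficient, which forces $\nabla_a\tau$ to be an explicit linear combination of $P_{ab}\lambda^b$; then $D_A\tau=\tau Y_A+(\nabla_a\tau)Z^a_A$ can be matched slot by slot against $X^B\mathrm{adj}(H)_{BA}$. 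The square-root relation $\tau^2=\detd(\zeta)/(n+1)$ is then extracted by pairing the first identity with the decomposition of $\mathrm{adj}(H)$ in the splitting: the top component of the generator is essentially $\tau^2$ times a factor of $n+1$ produced by the antisymmetrisations in $\epsilon^2$, which must match $\detd(\zeta)$.

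The main obstacle is bookkeeping: tracking the combinatorial factors from the $(n+2)!$ antisymmetrisations in $\epsilon^2$ consistently across $\det$, $\detd$, and $\mathrm{adj}$, and verifying \emph{scale-independently} that the algebraic generator $X^B\mathrm{adj}(H)_{BA}$ equals $D_A\tau$ rather than merely being proportional to it in one chosen scale. The cleanest way around the latter is probably to observe that both sides of the putative equality transform identically under the universal BGG machinery associated to $H$, so that matching them in one scale suffices.
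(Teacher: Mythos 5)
The paper itself offers no proof of this lemma (it is quoted from Flood--Gover), so your argument has to stand on its own. For the first identity your mechanism is the right one: the adjugate built from the tractor volume squared, the relation $H^{AC}\mathrm{adj}(H)_{CB}\propto\det(H)\,\delta^A_C$, and the observation that contracting $X^A$ into each antisymmetrised block kills everything except the $Y$-slot, so that the remaining $H$'s only contribute their $\zeta^{ab}W_a^AW_b^B$ parts and the contraction collapses to $\detd(\zeta)$ — all of that is correct. But what this computation gives is $\det(H)\,X^AX^B\Phi_{AB}= c\,\detd(\zeta)$ for a combinatorial constant $c$, i.e.\ $\det(H)$ \emph{divides} rather than multiplies, and this is not something you can absorb into ``normalisations'': the identity as printed is not even homogeneous under $H\mapsto cH$ (the left side scales as $c^{-1}$, the right as $c^{2n+3}$), whereas your derived form is. You should state explicitly that your argument establishes the homogeneity-consistent version and that the printed formula must be read accordingly, instead of attributing the discrepancy to bookkeeping.

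The genuine gap is in the degenerate case, where you conflate two different objects called $\tau$. The $\tau$ in the lemma's conclusion is a weight-one density \emph{defined} (up to sign) by $\tau^2=\detd(\zeta)/(n+1)$, equivalently $\tau\propto K_AX^A$ with $K_A$ spanning $\ker H$; it is not the weight $-2$ bottom slot of $H$ in your splitting. Indeed that slot lies in $\E(-2)$, so its square lies in $\E(-4)$ and cannot equal the weight-$2$ density $\detd(\zeta)/(n+1)$; moreover the $Y$-component of your candidate generator $X^B\mathrm{adj}(H)_{BA}$ is proportional to $\detd(\zeta)$, not to the $X^AX^B$-coefficient of $H$, and $D_A$ of a weight $-2$ density is $-2\tau Y_A+\nabla_a\tau\,Z^a_A$, not the $\tau Y_A+\nabla_a\tau\,Z^a_A$ you wrote. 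So reading off $\nabla_a\tau\propto P_{ab}\lambda^b$ from $\nabla_cH^{AB}=0$ and ``matching slot by slot'' cannot close, and no appeal to BGG naturality repairs a mismatch of weights. The working route, still inside your adjugate framework, is: since $H^{AB}$ is parallel, $\mathrm{adj}(H)_{AB}$ is parallel and, in the corank-one case, of the form $\pm(\mathrm{const})\,K_AK_B$ with $K_A$ parallel; set $\tau:=K_AX^A$, so that $D_A\tau=(K_BX^B)Y_A+(K_BW^B_a)Z^a_A=K_A$ spans the kernel, while your first-part contraction gives $\tau^2\propto\detd(\zeta)$, the factor $(n+1)$ fixing the normalisation of $K_A$. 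Note that parallelism (normality) of $H$ is indispensable here — it is what makes the kernel line parallel and $D_A\tau$ well defined as a kernel generator — and it is nowhere invoked in your sketch.
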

\subsection{The Model}\label{model}

We now review from Figueroa-O'Farrill and collaborators \cite{Figueroa-OFarrill:2021sxz} the homogeneous model on which is based the holonomy reduction of Cartan geometry which is considered in this article. We only give a brief outline of the results that are of interest for this article and refer the reader to \cite{Figueroa-OFarrill:2021sxz} for an extensive discussion and proofs.

We will write elements of the $n+1$-dimensional projective space $\mathbb{R}\text{P}^{n+1} \simeq \frac{\text{SL}(n+2)}{\mathbb{R}^{n+1} \rtimes \text{GL}(n+1)}$ in terms of homogeneous coordinates $[X]$ with $X\in \mathbb{R}^{n+2}$.
 Introducing a metric $\Phi$ of signature $(2,n)$ on $\mathbb{R}^{n+2}$ together with a null element $I \in \mathbb{R}^{n+2}$, $\Phi(I,I)=0$ picks a Poincaré subgroup
\begin{equation}
\text{ISO}(1,n-1) \subset \text{SL}(n+2).
\end{equation}
stabilising both $\Phi$ and $I$.

Since by definition the action of the Poincaré group $\text{ISO}(1,n-1)$ stabilises $I$ this defines a point-like orbit $[I]$ in  $\mathbb{R}\text{P}^{n+1}$. Introducing \[\mathcal{M}_{\lambda, \sigma} = \{ X\in \mathbb{R}^{n+2} \;s.t.\; \Phi(X,X)=\lambda,  \Phi(X,I)=\sigma, X\not \propto I\}\] the orbit decomposition of  $\mathbb{R}\text{P}^{n+1}$ under $\text{ISO}(1,n-1)$ is given by
\begin{equation}\label{ModelOrbitDecomposition}
\mathbb{R}\text{P}^{n+1}  \simeq \left(\sqcup_{\lambda \in \mathbb{R}} \mathcal{M}_{\lambda, \sigma=1}\right) \sqcup  \left(\mathcal{M}_{-1, 0} \sqcup  \mathcal{M}_{0, 0} \sqcup  \mathcal{M}_{1, 0}\right) \sqcup  [I].
\end{equation}
Each of the orbits in the first continuous set is isomorphic to $n$-dimensional Minkowski space:
\begin{align}
\forall \lambda &\in \mathbb{R}, \quad \mathcal{M}_{\lambda, \sigma=1}  \simeq \frac{\text{ISO}(1,n-1)}{\text{SO}(1,n-1)}.
\end{align}
The next three orbits are ``extended boundaries'', respectively denoted as $\text{Ti}^{n} = \mathcal{M}_{-1, 0}$, $\mathscr{I}^{n-1} = \mathcal{M}_{0, 0}$ and $\text{Spi}^{n} = \mathcal{M}_{1, 0}$ in \cite{Figueroa-OFarrill:2021sxz}. These are naturally real line bundles over hyperbolic space, the conformal sphere and de Sitter space respectively:
\begin{equation}\label{Boundary Homogeneous space}
\begin{array}{rcccccc}
\textrm{Ti}^n  &\simeq& \frac{\text{ISO}(1,n-1)}{\mathbb{R}^{n-1}\rtimes \text{SO}(n-1)} &\to& H^{n-1} &\simeq& \frac{\text{SO}(1,n-1)}{\text{SO}(n-1)}  \\[1em]

\mathscr{I}^{n-1} &\simeq& \frac{\text{ISO}(1,n-1)}{\mathbb{R}^{n-1} \rtimes (\mathbb{R}^* \times \text{ISO}(n-2)) } &\to& S^{n-2} &\simeq& \frac{\text{SO}(1,n-1)}{\mathbb{R}^{n-2} \rtimes (\mathbb{R}^* \times \text{SO}(n-2) )} \\[1em]

\textrm{Spi}^n  &\simeq& \frac{\text{ISO}(1,n-1)}{ \mathbb{R}^{n-1} \rtimes \text{SO}(1,n-2)} &\to& dS^{n-1} &\simeq& \frac{\text{SO}(1,n-1)}{\text{SO}(1,n-2)}.

\end{array}
\end{equation}

We now briefly recall how this orbit decomposition relates both to conformal and projective compactifications of Minkowski space.

The action of the Poincaré group on the Klein quadric $N = \{ [X]\in \mathbb{R}\text{P}^{n+1} \;s.t.\; \Phi(X,X)=0\}$ gives an orbit decomposition
\begin{equation}
N \simeq \mathcal{M}_{\lambda=0, \sigma=1} \sqcup  \mathcal{M}_{0, 0} \sqcup [I].
\end{equation}
This realises the conformal compactification of Minkowski space and was taken as the model for the holonomy reduction studied in \cite{herfray_asymptotic_2020,herfray_tractor_2022,Herfray:2021qmp}.

On the other hand, as depicted in Figure \ref{Figure: homogeneous space}, $\mathbb{R}\text{P}^{n+1}\setminus [I]$ is naturally a fibre bundle over $\mathbb{R}\text{P}^{n}$, where the projection is obtained by quotienting by $I$. This results in a fibration of the orbit decomposition (also depicted, in the dimensionally reduced $n=1$ case, in Figure \ref{Figure: 2D picture of the model}):
\begin{equation}\label{Homogeneous model: diagram decomposition}
\begin{array}{cccccc}
\mathbb{R}\text{P}^{n+1}\setminus [I]  &\simeq& \left(\sqcup_{\lambda \in \mathbb{R}} \mathcal{M}_{\lambda, \sigma=1}\right) &\sqcup&  \left(\mathcal{M}_{-1, 0} \sqcup \mathcal{M}_{0, 0} \sqcup  \mathcal{M}_{1, 0}\right)\\[1em]
\downarrow && \downarrow && \downarrow\\[1em]
\mathbb{R}\text{P}^{n}  &\simeq& M^n &\sqcup&  \left( H^{n-1} \sqcup  S^{n-2} \sqcup  dS^{n-1}\right)
\end{array}
\end{equation}
This quotient therefore realises the projective compactification of Minkowski space which was the starting point for the holonomy reduction studied in \cite{Cap:2014aa,Cap:2014ab,Flood:2018aa}.

We therefore see that this extended model in a sense bridges between the conformal and projective compactification of Minkowski space. The investigation of the corresponding interplay between conformal and projective aspects in the associated curved orbit decomposition is the subject of this article.

\begin{figure}[h!]
\centering
\begin{tikzcd}[column sep={3cm,between origins}, row sep={2.5cm, between origins},arrows={very thick,-latex}]
& \R^{n+2}\setminus\{\R I\}\arrow[dl,swap,color=gray,"/\R"] \arrow[dr,"/I"]&\\
\underset{\substack{\textrm{``Lines in $\R^{n+2}$}\\ \textrm{differing from $[I]$ ''}}}{\RP^{n+1}\setminus[I]} \arrow[dr,swap,"/I"] & & \R^{n+1}\setminus\{0\}\arrow[dl,color=gray,"/\R"]
\\ & \underset{\substack{\textrm{``Planes in $\R^{n+2}$}\\ \textrm{containing $I$''}}}{\RP^n} & 
\end{tikzcd}
\caption{Fibration of $\RP^{n+1}\setminus[I]$ over $\RP^{n}$ and their corresponding ambient spaces.}\label{Figure: homogeneous space}
\end{figure}

\begin{figure}[h!]
	\centering
	\includegraphics[width=1\textwidth]{./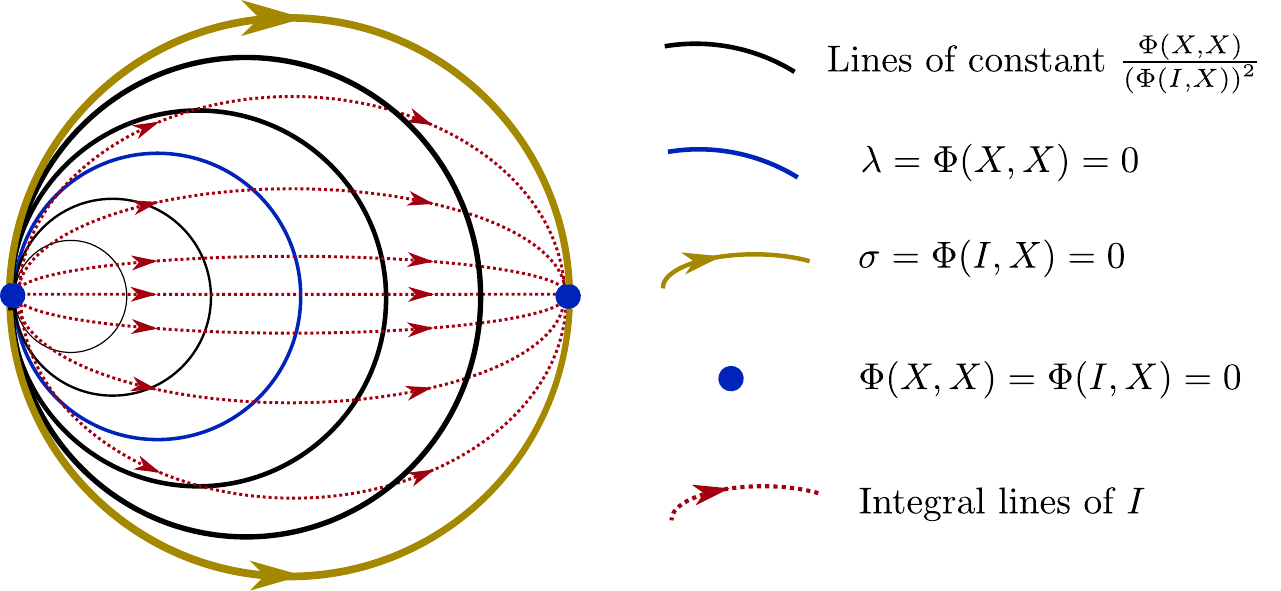}
	\caption{Dimensionally reduced depiction of $\RP^{n+1}\setminus[I]$. The oriented lines indicate the fibres of the projection  $\RP^{n+1}\setminus[I]\to\RP^{n}$; apart from those along the extended boundary at infinity -- depicted here by a golden line -- the fibres cross successive surfaces of constant growing $\frac{\Phi(X,X)}{(\Phi(I,X)^2}$ (ranging from $-\infty$ to $+\infty$).}\label{Figure: 2D picture of the model}
\end{figure}

\newpage
\section{Curved orbit decomposition}\label{Section: curved orbit decompostion}
The object of this section is to study the curved picture of the previous homogenous model. Throughout, $\widetilde{M}$ is a compact manifold of dimension $n+1$ equipped with a projective structure $[\nabla]$. We assume that it is additionally endowed with a parallel  \emph{non-degenerate} tractor $H^{AB}$ of signature\footnote{For definiteness, we work in this article with the physically interesting case of Lorentz signature. However, most of our considerations are independent of this choice.} $(2,n)$ corresponding to a normal of the metrisability equation, and a nowhere vanishing parallel null-tractor $I^A$:
\begin{align}\label{Holonomy reduction conditions DH=0, DI=0}
\nabla_a H^{AB} &=0, & \nabla_a I^A &=0,
\end{align}
 with $\det(H^{AB})\neq0$ and  $I^AI^B\Phi_{AB}=0$, where $\Phi_{AB}$ is the pointwise inverse of $H^{AB}$.
 
From Eq.~\eqref{TractorConnection}, it is easily seen that in an arbitrary scale $\nabla$ the components of
\[ I^A= n^aW_a^A + \rho X^A,\]
satisfy the equations:
\begin{equation}\label{IParallel} \nabla_cI^A =\left(\nabla_c n^a + \delta_c^a \rho \right)W_a^A + (\nabla_c \rho - P_{ca}n^a)X^A=0 \Leftrightarrow \begin{cases} \nabla_c n^a= -\delta_c^a \rho \\ \nabla_c\rho = P_{ca}n^a \end{cases}. \end{equation}
We emphasise that $n^a \in \Gamma(T\widetilde{M}(-1))$ is a scale-independent quantity whilst $\rho$ is not. On top of this vector field, we also have at our disposal two distinguished densities (of weight 1 and 2 respectively) that will be central to our development:
\begin{align}
\sigma&=\Phi_{AB}I^AX^B,\label{DefSigma}\\
\lambda&=\Phi_{AB}X^AX^B.\label{DefLambda}
\end{align}

These will allow us to state our main result concerning the curved orbits. 

\subsection{Resulting curved orbits}

\begin{theo}\mbox{}\label{Thrm: Curved orbits decomposition} Let $\widetilde{M}$ be a $(n+1)$-dimensional closed manifold equipped with a projective structure $[\nabla]$, a non-degenerate parallel tractor $H^{AB}$ of signature $(2,n)$ and a nowhere vanishing parallel tractor $I^A$, such that $\Phi_{AB}I^AI^B=0$ where $\Phi_{AB}$ is the pointwise inverse of $H^{AB}$. Define, $n^a$, $\sigma$ and $\lambda$ by Eqs.~\eqref{IParallel}--\eqref{DefLambda}.
\begin{enumerate}
\item The nowhere-vanishing null parallel tractor $I^A$ determines a first decomposition of $\widetilde{M}$ into two parts:
\begin{equation*} \widetilde{M} = M\sqcup \mathcal{Z}(n),
\end{equation*}
where $\mathcal{Z}(n) := \{x\in \widetilde{M}, n^a=0 \}$ is composed of finitely many isolated points and $M$ is the complement. 
\item The additional data of the parallel tractor $H^{AB}$ then provides a further decomposition
\begin{equation}
M= \mathcal{O} \sqcup \Sigma
\end{equation}
 Here $\mathcal{O}$ is the open submanifold such that $\sigma \neq 0$ and (if non-empty) $\Sigma := \mathcal{Z}(\sigma)\setminus \mathcal{Z}(n)$ is a smooth embedded hypersurface. 
 \item Finally, $\Sigma$ itself decomposes into further smooth submanifolds: 
 \begin{equation*}
\Sigma= \Sigma^- \sqcup \Sigma^0 \sqcup \Sigma^+
 \end{equation*}
   with $\Sigma^- :=\mathcal{Z}(\sigma)\cap\{\lambda <0\}$, $\Sigma^+:=\mathcal{Z}(\sigma)\cap\{\lambda > 0\}$  and $\Sigma^0:=\mathcal{Z}(\sigma)\cap\mathcal{Z}(\lambda)\setminus\mathcal{Z}(n).$
  \end{enumerate}
 Overall, in summary, we have:
  \begin{equation}\label{Decomposition of the manifold}
  M=\mathcal{O} \sqcup \Sigma^- \sqcup \Sigma^0 \sqcup \Sigma^+ \sqcup \mathcal{Z}(n).
  \end{equation}
\end{theo}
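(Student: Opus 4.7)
The overall strategy is to study the three scale-independent objects $n^a$, $\sigma$, $\lambda$ built from the parallel tractors $I^A$ and $H^{AB}$, and to analyse their zero loci using the parallel-transport equations \eqref{IParallel} together with $\nabla_a X^A = W^A_a$ and $\nabla_c H^{AB} = 0$ (hence also $\nabla_c \Phi_{AB} = 0$).

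For the zero locus of $n^a$: at any point $x \in \mathcal{Z}(n)$, the splitting $I^A = n^a W^A_a + \rho X^A$ reduces to $I^A(x) = \rho(x) X^A(x)$, and since $I^A$ is nowhere-vanishing one must have $\rho(x) \neq 0$. The parallel condition $\nabla_c n^a = -\rho\, \delta^a_c$ then provides a non-degenerate linearisation of $n^a$ at $x$, so $x$ is an isolated non-degenerate zero of the weighted vector field. Compactness of $\widetilde{M}$ then forces $\mathcal{Z}(n)$ to be finite, making $M = \widetilde{M} \setminus \mathcal{Z}(n)$ a manifold, and $\mathcal{O} = \{ \sigma \neq 0 \} \cap M$ is open by continuity of $\sigma$.

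For $\Sigma$: differentiating $\sigma = \Phi_{AB} I^A X^B$ gives $\nabla_c \sigma = \Phi_{AB} I^A W^B_c$. In any chosen scale the cotractor $\Phi_{AB} I^A$ decomposes as $\Phi_{AB} I^A = \sigma Y_B + (\nabla_c \sigma)\, Z^c_B$, as one checks by pairing with $X^B$ and $W^B_c$ using the identities $Y_B X^B = 1$, $Z^a_B X^B = 0$, $Y_B W^B_c = 0$, $Z^a_B W^B_c = \delta^a_c$. On $\Sigma$ we have $\sigma = 0$, so vanishing of $d\sigma$ at such a point would force $\Phi_{AB} I^A = 0$ and, by non-degeneracy of $\Phi$, $I^A = 0$, contradicting the hypothesis. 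Hence $d\sigma$ is nowhere zero on $\Sigma$, which is therefore a smooth embedded hypersurface of $M$.

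For $\Sigma^0$: the same computation gives $\nabla_c \lambda = 2 \Phi_{AB} X^A W^B_c$ and the analogous decomposition $\Phi_{AB} X^A = \lambda Y_B + \tfrac{1}{2}(\nabla_c \lambda)\, Z^c_B$. On $\Sigma^0$ both $\sigma$ and $\lambda$ vanish, so $\Phi_{AB} I^A$ and $\Phi_{AB} X^A$ both lie in the $Z$-slot, proportional respectively to $\nabla_c \sigma$ and $\nabla_c \lambda$. If $d\sigma$ and $d\lambda$ were linearly dependent at a point of $\Sigma^0$, these cotractors would be proportional, whence $I^A \propto X^A$ by invertibility of $\Phi$, i.e.\ $n^a = 0$; this contradicts our working on $M$. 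Thus $d\sigma, d\lambda$ are linearly independent along $\Sigma^0$, making it an embedded codimension-$2$ submanifold of $M$, while $\Sigma^\pm$ are open subsets of the hypersurface $\Sigma$ and hence automatically smooth. The main technical delicacy is keeping track of the weights of $\sigma$, $\lambda$, $X^A$, etc., so that the objects $\nabla_c \sigma$ and $\nabla_c \lambda$ really define density-valued 1-forms whose pointwise vanishing is scale-independent; once this bookkeeping is in place, the whole argument reduces to the algebraic facts that $\Phi$ is invertible and that $I^A, X^A$ are linearly independent precisely off $\mathcal{Z}(n)$.
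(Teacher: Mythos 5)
Your argument is correct and follows essentially the same route as the paper: the identities $D_A\sigma=\Phi_{AB}I^B$ and $D_A\lambda=2\Phi_{AB}X^B$ (which you derive by direct covariant differentiation rather than via the Thomas $D$-operator, a purely cosmetic difference) give nonvanishing of $d\sigma$ on $\mathcal{Z}(\sigma)$ and independence of $d\sigma,d\lambda$ on $\Sigma^0$ by invertibility of $\Phi_{AB}$, while $\nabla_c n^a=-\rho\,\delta^a_c$ with $\rho\neq 0$ on $\mathcal{Z}(n)$ gives isolated (hence, by compactness, finitely many) zeros. The only point the paper records that you omit is $\mathcal{Z}(n)\subset\mathcal{Z}(\sigma)\cap\mathcal{Z}(\lambda)$, but this is not needed for the decomposition as stated.
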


It should be noted that $\widetilde{M}$ also admits another curved orbit decomposition relative to the data $(\lambda, H^{AB})$:
\begin{equation}\label{Decomposition of O}
\widetilde{M} = U^-\sqcup \Lambda\sqcup U^+,
\end{equation}
 where $U^-$, $\Lambda$ and  $U^-$ are the sets $\{\lambda<0\}$, $\{\lambda=0\}$ and $\{\lambda>0\}$ respectively.  In particular, it follows from \cite{Cap:2014ab,Cap:2014aa,Flood:2018aa} that $\Lambda$ is the projective boundary of $U^{\pm}$. This would additionally lead to a further decomposition of $\mathcal{O}$ as $\mathcal{O}^+\sqcup \mathcal{O}^0 \sqcup \mathcal{O}^- $, however, this will not be our central focus because the decomposition \eqref{Decomposition of O} is \emph{not} preserved by the flow\footnote{in the sense described in Section~\ref{Proof3point2}} of the densified vector field $n^a$. Rather we have:

\begin{theo}\mbox{}\label{Thrm: Curved orbits decomposition: quotient}
The flow of $n^a$ preserves the decomposition \eqref{Decomposition of the manifold}.  Furthermore, the quotient $\Q_\mathcal{O}$ of $\mathcal{O}$ by this flow is a $n$-dimensional manifold.

If we  assume that the quotient $\Q$ of $M=\widetilde{M}\setminus \mathcal{Z}(n)$ is a manifold we obtain the decomposition
\begin{center}
\begin{tikzcd}[column sep=2mm]
M\arrow[d] &=&\mathcal{O}\arrow[d] &\sqcup& \overbrace{ \left(\Sigma^- \sqcup \Sigma^0 \sqcup \Sigma^+\right)}^\Sigma\arrow[d]\\
\Q  &\simeq& \Q_\mathcal{O} &\sqcup&  \underbrace{\left(  \mathcal{H}^- \sqcup  \mathcal{H}^0 \sqcup   \mathcal{H}^+\right)}_{\mathcal{H}}
\end{tikzcd}
\end{center}
generalising \eqref{Homogeneous model: diagram decomposition} in our curved setting.
\end{theo}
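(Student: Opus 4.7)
The plan is to differentiate the scale-invariant densities $\sigma$ and $\lambda$ defined in \eqref{DefSigma}-\eqref{DefLambda} along $n^a$, extract two scalar ODEs governing their behaviour, use these to establish flow-invariance of each stratum of \eqref{Decomposition of the manifold}, and then read off the manifold structure of the quotient. The key computation uses $\nabla_a \Phi_{AB} = 0$ (dual of $\nabla_a H^{AB} = 0$), $\nabla_a I^A = 0$, $\nabla_a X^A = W_a^A$, and the nullity $\Phi_{AB} I^A I^B = 0$; this immediately gives $\nabla_c \sigma = \Phi_{AB} I^A W_c^B$ and $\nabla_c \lambda = 2\,\Phi_{AB} X^A W_c^B$. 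Contracting with $n^c$ and substituting $n^c W_c^A = I^A - \rho X^A$ (read from \eqref{IParallel}) collapses these to the fundamental pair
\begin{align*}
n^c \nabla_c \sigma &= -\rho\, \sigma, & n^c \nabla_c \lambda &= 2\sigma - 2\rho\, \lambda.
\end{align*}

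Both relations are linear homogeneous in the controlled variable along flow lines. The first forces $\sigma$ to be either identically zero or nowhere zero on each integral curve of $n^a$, yielding flow-invariance of $\mathcal{O}$ and of $\Sigma$. Restricted to $\Sigma$ the second reduces to $n^c \nabla_c \lambda = -2\rho\, \lambda$, again linear homogeneous, so the sign of $\lambda$ is preserved along flow lines in $\Sigma$, separating $\Sigma^-$, $\Sigma^0$, $\Sigma^+$ into flow-invariant sets. Invariance of $\mathcal{Z}(n)$ being automatic, this settles the first assertion.

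For the quotient, the nowhere-vanishing weighted vector field $n^a$ spans a regular 1-dimensional integrable distribution on $M$; local transversals then yield $n$-dimensional foliation charts, so both $\mathcal{O}/\mathcal{F}$ and $M/\mathcal{F}$ are locally smooth $n$-manifolds. To upgrade this to a genuine manifold structure on the open stratum, a direct corollary of the identities above is
\begin{align*}
n^c \nabla_c \left(\frac{\lambda}{\sigma^2}\right) = \frac{2}{\sigma},
\end{align*}
which is nowhere zero on $\mathcal{O}$ and of constant sign along each orbit (sign of $\sigma$ being constant by the previous paragraph). The weight-zero function $\lambda/\sigma^2$ is therefore strictly monotonic along every flow line in $\mathcal{O}$ --- so no orbit closes and no two orbits accumulate onto each other --- which combined with the local slicing gives $\Q_\mathcal{O} = \mathcal{O}/\mathcal{F}$ a Hausdorff $n$-manifold structure.

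Finally, under the running assumption that $\Q = M/\mathcal{F}$ is a manifold, $\pi: M \to \Q$ is a surjective submersion. Because $n^a$ is tangent to each stratum and nowhere zero on $M$, the restrictions $\pi|_{\Sigma^\pm}$ and $\pi|_{\Sigma^0}$ are again submersions onto their images; hence $\mathcal{H}^\pm := \pi(\Sigma^\pm)$ are embedded hypersurfaces and $\mathcal{H}^0 := \pi(\Sigma^0)$ is an embedded codimension-two submanifold of $\Q$, and the diagram in the statement follows. The real obstacle is the third paragraph: proving Hausdorffness of the leaf space in the compact setting. The monotone function $\lambda/\sigma^2$ handles the open stratum cleanly, whereas the analogous global control on the closed strata $\Sigma^\pm$ and $\Sigma^0$ is precisely what is being granted by the standing assumption that $\Q$ is itself a manifold.
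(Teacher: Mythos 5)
Your first two paragraphs are essentially the paper's own argument: the two identities $n^a\nabla_a\sigma=-\rho\sigma$ and $n^a\nabla_a\lambda=2\sigma-2\rho\lambda$ are exactly Lemma~\ref{CovariantDerivativesDirectionn} (you derive them in components, the paper via the Thomas $D$-operator and nullity of $I$, which is equivalent), and the linear-ODE argument for flow-invariance of each stratum is the paper's corollary of that lemma. The final paragraph, where everything about $\Q$ and the strata $\mathcal{H}^{\pm},\mathcal{H}^0$ is granted by the standing assumption that $\Q$ is a manifold fibred by $\pi$, is also in line with what the paper does.

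The genuine gap is in your third paragraph, in the step \enquote{$l=\lambda/\sigma^2$ is strictly monotonic along every flow line, so no two orbits accumulate onto each other, hence the leaf space is Hausdorff}. Strict monotonicity of a function along orbits does not imply separation of orbits in the quotient: consider on $\R^2\setminus\{0\}$ the complete flow obtained by rescaling $\partial_x$ by a positive function that vanishes rapidly at the origin; the coordinate $x$ is strictly increasing along every orbit, yet the two half-orbits contained in the $x$-axis cannot be separated in the orbit space (the \enquote{line with two origins}). Monotonicity only rules out periodic orbits, i.e.\ it gives freeness; Hausdorffness requires \emph{properness} of the $\R$-action, which amounts to a uniform bound on the time needed to leave a compact set. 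This is precisely what the paper's proof of Theorem~\ref{TheoFibreBundleInside BIS} supplies by working in the scale determined by $\sigma$ on $\mathcal{O}$: for $N=\sigma n$ one has $N^a\nabla_a l=2$ identically, hence $l(\phi^N_t(x))=l(x)+2t$, from which freeness \emph{and} properness follow at once, and the characterisation of principal bundles (\cite[Theorem~4.2.4]{Sharpe:1997aa}) then yields the smooth structure on $\Q_\mathcal{O}$ together with the canonical global trivialisation $\mathcal{O}\simeq\R\times\Q_\mathcal{O}$. Your identity $n^a\nabla_a l=2/\sigma$ is correct, but for a generic choice of parametrising scale $\nu$ the rate $2\nu/\sigma$ need not be bounded away from zero along an orbit, so the affine-in-$t$ behaviour (and with it properness) is special to the scale $\sigma$; to close the gap you should fix that scale and run the free-and-proper argument rather than appeal to monotonicity alone.
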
 

Our present goal is to identify the geometric structures of each part and the relationships between them when they are all (save possibly $\mathcal{Z}(n)$) non-empty.

\subsection{Derivation of Theorem \ref{Thrm: Curved orbits decomposition}}
We recall that the Thomas $D$-operator is defined by the following equation in an arbitrary scale:
\begin{equation}\label{ThomasD} D_A T^{\mathscr{B}} = \nabla_a T^{\mathscr{B}}Z^a_A +w T^{\mathscr{B}}Y_A.\end{equation}
Where $T^{\mathscr{B}}$ is a section of an arbitrary bundle with tractor indices, of weight $w$; note that $D_A$ reduces weight by $1$. The following lemma will be useful: \begin{lemm}\label{Lemma: Relations ISigma, XLambda, etc} Let $H^{AB}, I^A$ be the parallel tractors (see Eq.~\eqref{Holonomy reduction conditions DH=0, DI=0}) introduced at the beginning of Section~\ref{Section: curved orbit decompostion} and $\sigma, \lambda$ the densities defined by Eqs.\eqref{DefSigma} and \eqref{DefLambda}, then:
\begin{align}\label{RelationISigma} I^A= H^{AB}D_B\sigma,\\ \label{RelationXLambda}
\Phi_{AB}X^B=\frac{1}{2}D_A\lambda, \\ \label{InverseTractorMetric} \Phi_{AB}=\frac{1}{2}D_AD_B \lambda. \end{align}

\end{lemm}
\begin{proof}
In any scale, the identity tractor satisfies: \[ \delta^A_B=W^A_aZ_B^b\delta_b^a + X^AY_B=D_BX^A.\] 
Since the Thomas $D$ operator satisfies the Leibniz rule, we have:
\[D_A\sigma = D_A \Phi_{BC}I^BX^C + \Phi_{BC}D_AI^BX^C+\Phi_{BC}I^BD_AX^C\]
However, $\Phi$ and $I$ are weightless parallel tractors, so, by definition of the Thomas $D$ operator, $D_A\Phi_{BC}=0$ and $D_A I^B=0$.
Thus:
\[D_A\sigma = \Phi_{BA}I^B. \]
Multiplying by $H^{AB}$ leads to the first equation.

In a similar fashion: 
\[D_A\lambda= 2\Phi_{BC}(D_AX^B)X^C=2\Phi_{AC}X^C \]
Applying $D_B$ once more to this equation gives the last identity.
\end{proof}

We use this to prove the first elementary decomposition of the manifold $M$.
\begin{prop}
For any connection $\nabla$ in the projective class $\Sigma \cap \mathcal{Z}(\nabla_c\sigma) = \emptyset$. Consequently, when non-empty, $\Sigma$ is a smooth hypersurface of $M$.
\end{prop}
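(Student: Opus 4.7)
My plan is to establish the statement by contraposition, using the identity $I^A = H^{AB} D_B \sigma$ from Lemma \ref{Lemma: Relations ISigma, XLambda, etc}, which is available because $H^{AB}$ is assumed non-degenerate so $\Phi_{AB}$ is its pointwise inverse. The key observation is that the Thomas $D$-operator expands, in any chosen scale, as
\begin{equation*}
D_B \sigma = (\nabla_b \sigma)\, Z^b_B + \sigma\, Y_B,
\end{equation*}
since $\sigma \in \Gamma(\mathcal{E}(1))$ has weight one. Hence at any point where both $\sigma$ and $\nabla_c\sigma$ vanish, the entire tractor $D_B\sigma$ vanishes, and by Lemma \ref{Lemma: Relations ISigma, XLambda, etc} so does $I^A$.

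Reading off the components in the chosen scale using the splitting $I^A = n^a W_a^A + \rho X^A$, the vanishing of $I^A$ forces in particular $n^a = 0$. Thus any point of $\mathcal{Z}(\sigma) \cap \mathcal{Z}(\nabla_c\sigma)$ lies in $\mathcal{Z}(n)$, and is therefore excluded from $\Sigma = \mathcal{Z}(\sigma) \setminus \mathcal{Z}(n)$ by definition. This gives the contradiction and hence $\Sigma \cap \mathcal{Z}(\nabla_c\sigma) = \emptyset$. I also want to record that the statement is genuinely scale-independent: under a projective change $\hat\nabla = \nabla + \Upsilon$, we have $\hat\nabla_c\sigma = \nabla_c\sigma + \Upsilon_c \sigma$, so at points where $\sigma = 0$ the value of $\nabla_c\sigma$ does not depend on the choice of representative in $[\nabla]$, and the conclusion holds uniformly for all connections in the projective class.

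To extract the hypersurface statement, I work in a local scale (equivalently, a local nowhere-vanishing section of $\mathcal{E}(1)$) which trivialises the density bundle; then $\sigma$ is represented by a smooth function $f$ on the chart, with $\Sigma$ locally realised as $\{f=0\}$. The previous step guarantees $df \ne 0$ along this zero set within $M = \widetilde{M} \setminus \mathcal{Z}(n)$, so $\Sigma$ is the regular zero locus of a smooth function and is therefore a smooth embedded hypersurface of $M$ by the implicit function theorem.

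The main obstacle is essentially bookkeeping rather than substance: one must be careful that the argument does not accidentally rely on $X^A$ being defined independently of scale (it is not; only $I^A$ and the decomposition $I^A = n^a W_a^A + \rho X^A$ in a chosen scale are needed), and that the final hypersurface step is formulated for a section of a line bundle rather than a genuine function. Both points are routine once the algebraic identity from Lemma \ref{Lemma: Relations ISigma, XLambda, etc} is in hand.
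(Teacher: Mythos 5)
Your proof is correct and follows essentially the same route as the paper: the identity $I^A=H^{AB}D_B\sigma$ of Lemma~\ref{Lemma: Relations ISigma, XLambda, etc} together with the expansion $D_B\sigma=\sigma Y_B+(\nabla_b\sigma)Z^b_B$ shows that $I^A$ would vanish at a point of $\mathcal{Z}(\sigma)\cap\mathcal{Z}(\nabla_c\sigma)$, which is excluded (the paper invokes the nowhere-vanishing of $I^A$ directly, you route through $n^a=Z^a_AI^A=0$ and the definition $\Sigma=\mathcal{Z}(\sigma)\setminus\mathcal{Z}(n)$ --- the same mechanism), and the regular-value argument for the hypersurface statement is the standard completion. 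One harmless slip in your closing remark: $X^A$ is in fact the canonical, scale-independent inclusion $\mathcal{E}(-1)\hookrightarrow\mathcal{T}$; it is $Y_A$ and $W^A_a$ that depend on the choice of connection, but this does not affect your argument.
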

\begin{proof}
From~\eqref{RelationISigma}, we see that on $\mathcal{Z}(\sigma)\cap\mathcal{Z}(\nabla_c\sigma)$ we must have:
\[ 0= D_A\sigma= \Phi_{AB}I^B.\]
Since by assumption, $\Phi_{AB}$ is non-degenerate this implies $I^B=0$ at such points. The result follows as we assume $I^B$ non-vanishing.
\end{proof}

The hypersurface $\Sigma$ will be interpreted as points at infinity of the region $\{\sigma \neq 0\}$; this is justified by:
\begin{prop}
Let $\mathcal{O}_\pm=\{\pm \sigma > 0\}$, then the scale $\nabla^\sigma$ determined by $\sigma$ on $\mathcal{O}_\pm$ is projectively compact of order one and $\mathcal{O}_\pm \cup \Sigma$ is a projective compactification of $\mathcal{O}_\pm$. \end{prop}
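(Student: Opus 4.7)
The plan is to verify directly the defining condition of projective compactness of order one, building on the transversal vanishing of $\sigma$ established in the previous proposition.

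First, since $\sigma \in \Gamma(\mathcal{E}(1))$ vanishes transversally on $\Sigma$, any local trivialisation of the density bundle $\mathcal{E}(1)$ identifies $\sigma$ with a smooth function $f$ satisfying $df|_\Sigma \neq 0$. Hence $\Sigma$ is a smoothly embedded hypersurface of $\widetilde{M}$, and $\mathcal{O}_\pm \cup \Sigma$ inherits from $\widetilde{M}$ the structure of a smooth manifold with boundary $\Sigma$; this supplies the topological data required of a projective compactification, and it remains only to identify the asymptotic form of the scale connection.

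Next, to verify projective compactness of order one, I would fix any smooth connection $\hat\nabla$ in the projective class $[\nabla]$ on a neighbourhood of $\Sigma$; for instance, the scale of a nowhere-vanishing local density. By definition, $\nabla^\sigma$ is the unique element of $[\nabla]$ with $\nabla^\sigma \sigma = 0$, so writing $\nabla^\sigma = \hat\nabla + \Upsilon$ and applying the transformation rule for weight-one densities forces
\[
\Upsilon_a \;=\; -\,\sigma^{-1}\,\hat\nabla_a \sigma.
\]
In the trivialisation $\sigma = f\,s$, with $\hat\nabla s = \beta \otimes s$ for a smooth $1$-form $\beta$, this reads $\Upsilon = -d(\log f) - \beta$. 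Choosing $\rho = f$ on the component of $\mathcal{O}_\pm$ where $f > 0$, one obtains $\Upsilon = -\tfrac{d\rho}{\rho} + \Upsilon_0$ with $\Upsilon_0 := -\beta$ smooth up to $\Sigma$, which is precisely the defining form of projective compactness of order one in the sense of \cite{Cap:2014aa}.

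The only delicate point is the interpretation of $\hat\nabla\sigma/\sigma$ as a genuine $1$-form while $\sigma$ is a density, which is resolved by the local trivialisation of $\mathcal{E}(1)$; beyond that, no further obstacle arises. The argument can be read as specialising the general correspondence between null parallel tractors and projectively compact scales (cf. \cite{Cap:2014aa,Flood:2018aa}) to our concrete triple $(\sigma,H^{AB},I^A)$.
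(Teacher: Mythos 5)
Your argument is correct, but it takes a different (more self-contained) route than the paper: the paper's entire proof is a one-line appeal to \cite[Proposition~2.4]{Flood:2018aa}, which is precisely the statement that a projective scale $\sigma$ extending to a defining density for the boundary yields a projectively compact connection of order one, whereas you unpack that result and verify the definition directly. Your computation is the standard one and it is sound: writing $\sigma=fs$ with $s$ a nowhere vanishing local density, $\hat\nabla s=\beta\otimes s$, one gets $\nabla^\sigma=\hat\nabla-\bigl(d\log f+\beta\bigr)$, so with the local defining function $\rho=\pm f$ the modified connection $\nabla^\sigma+\tfrac{d\rho}{\rho}=\hat\nabla-\beta$ extends smoothly across $\Sigma$, which is exactly the order-one condition of \cite{Cap:2014aa}; the transversal vanishing $\nabla\sigma\big|_\Sigma\neq 0$ from the preceding proposition guarantees both that $\rho$ is a genuine defining function and that $\mathcal{O}_\pm\cup\Sigma$ is a manifold with boundary (note this is a statement inside $M=\widetilde{M}\setminus\mathcal{Z}(n)$, since $\sigma$ need not vanish transversally at the isolated points of $\mathcal{Z}(n)$, and on $\mathcal{O}_-$ one takes $\rho=-f$, which changes nothing since $d\rho/\rho=df/f$). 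What your version buys is transparency: it shows that only the smooth extension of $\sigma$ as a defining density is used, the parallel tractors $I^A$, $H^{AB}$ entering solely through the previous proposition; what the paper's citation buys is brevity and the converse/uniqueness aspects of the Flood--Gover correspondence, which your one-directional check does not reproduce (and does not need for this statement). A small stylistic simplification: choosing $\hat\nabla=\nabla^s$ special makes $\beta=0$ and the extension is just $\nabla^s$ itself.
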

\begin{proof}
This is a direct consequence of~\cite[Proposition 2.4]{Flood:2018aa}.
\end{proof}
We recall from~\cite[Proposition 2.3]{Cap:2014ab} that in this case the points are indeed \enquote{at infinity} with respect to the (parametrised) geodesics of $\nabla^\sigma$, they cannot be reached for a finite value of the affine parameter.

Whilst the decomposition of $\mathcal{O}$  associated with $H^{AB}$ and the density $\lambda$ studied in~\cite{Flood:2018aa} is not of direct interest to us, the set $\mathcal{Z}(\sigma) \cap \mathcal{Z}(\lambda)$ is an important region. Ideally, we would like it to be a codimension $2$ submanifold of $\widetilde{M}$, the following proposition studies some possible singular points.
\begin{prop}
Recall that $I^A$ defines a weight $-1$ vector field $n^a=Z^a_AI^A$.
The set of points $\mathcal{Z}(n)$, or equivalently points at which $I\propto X$, is composed of isolated points that lie in $\mathcal{Z}(\lambda)\cap\mathcal{Z}(\sigma)$.
Since $\widetilde{M}$ is compact $\mathcal{Z}(n)$ is finite.
\end{prop}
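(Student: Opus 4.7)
The plan is to verify in order the three assertions: (a) the stated equivalence $\mathcal{Z}(n)=\{x : I^A(x)\propto X^A(x)\}$, together with the inclusion $\mathcal{Z}(n)\subset \mathcal{Z}(\lambda)\cap\mathcal{Z}(\sigma)$; (b) the zeros of $n^a$ are isolated; (c) the conclusion of finiteness is then immediate by compactness. For (a) I would simply unpack $n^a=Z^a_A I^A$: since $Z^a_A$ is the projection in the short exact sequence with kernel the image of $X^A$, we have $n^a(x)=0$ if and only if $I^A(x)=\rho(x) X^A(x)$ for some scalar $\rho(x)$. Because $I^A$ and $X^A$ are nowhere vanishing, on $\mathcal{Z}(n)$ we necessarily have $\rho\neq 0$.

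Next, using that $I^A$ is null with respect to $\Phi_{AB}$, at any $x\in\mathcal{Z}(n)$
\[
0=\Phi_{AB}I^AI^B=\rho^2\,\Phi_{AB}X^AX^B=\rho^2\,\lambda,
\]
so $\lambda(x)=0$, and then $\sigma(x)=\Phi_{AB}I^AX^B=\rho\,\Phi_{AB}X^AX^B=\rho\,\lambda(x)=0$. This yields $\mathcal{Z}(n)\subset\mathcal{Z}(\lambda)\cap\mathcal{Z}(\sigma)$.

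For isolation I would fix a scale in the projective class, which trivialises $\mathcal{E}(-1)$ locally and turns $n^a$ into an honest vector field near $x_0\in\mathcal{Z}(n)$. Equation \eqref{IParallel} gives $\nabla_c n^a=-\rho\,\delta_c^a$, so the linearisation of this vector field at the zero $x_0$ is $-\rho(x_0)\cdot \mathrm{Id}_{T_{x_0}M}$, which is invertible since $\rho(x_0)\neq 0$. The inverse function theorem then shows that $n^a$ is locally a diffeomorphism onto a neighbourhood of $0\in\R^{n+1}$, so $x_0$ is an isolated zero. Once this is established, $\mathcal{Z}(n)$ is a discrete subset of the compact manifold $\widetilde{M}$ and is therefore finite.

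No substantive obstacle arises; the only point requiring care is that $n^a$ is a section of $TM(-1)$ rather than $TM$, so speaking of its differential at a zero requires a local trivialisation of $\mathcal{E}(-1)$. Any choice of scale provides this, and the linearisation computed from \eqref{IParallel} is independent of that choice up to a positive factor, which does not affect invertibility.
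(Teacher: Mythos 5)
Your proposal is correct and follows essentially the same route as the paper: at a zero of $n^a$ one has $I^A=\rho X^A$ with $\rho\neq 0$, nullity of $I$ forces $\lambda=0$ and hence $\sigma=\rho\lambda=0$, and the relation $\nabla_c n^a=-\rho\,\delta_c^a$ from \eqref{IParallel} makes the zeros nondegenerate, hence isolated, with finiteness by compactness. Your extra care about trivialising $\mathcal{E}(-1)$ before invoking the inverse function theorem is a harmless refinement of the paper's argument, not a different method.
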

\begin{proof}
We introduce an arbitrary scale $\nabla$, and recall that
	$I^A=\rho X^A + n^aZ_a^A$. Thus $n$ vanishes exactly at points where $I\propto X$. 
	
	Since $I$ is null, and by definition of $\sigma$, when $I\propto X$ it follows that we have both $\lambda=\Phi_{AB}X^AX^B \propto \Phi_{AB}I^AI^B=0$ and $\sigma=\Phi_{AB}X^AI^B \propto \Phi_{AB}X^AX^B=0.$ So $\mathcal{Z}(n)\subset \mathcal{Z}(\lambda)\cap\mathcal{Z}(\sigma)$. Let us prove that these are isolated points. 

Using Eq.~\eqref{TractorConnection} we see that:
\[\nabla_b n^a= \nabla_b(I^AZ_A^a)=-\delta_b^aY_AI^A=-\delta_b^a \rho.\]
Since $I$ is nowhere vanishing, $\rho$ does not vanish on $\mathcal{Z}(n)$ and for any vector field $v^b$ that does not vanish on $\mathcal{Z}(n)$, we must have:
$v^b\nabla_b n^a=\rho v^b \neq 0$ therefore $\mathcal{Z}(n)$ consists of isolated points.
\end{proof}

The following justifies the introduction of $\mathcal{Z}(n)$:
\begin{prop}
Let $\Sigma^0$ be the relative complement of $\mathcal{Z}(n)$ in $\mathcal{Z}(\sigma)\cap \mathcal{Z}(\lambda)$, then $\Sigma^0$ is a smooth submanifold of $M$ with codimension $2$.
\end{prop}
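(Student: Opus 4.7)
The plan is to apply the regular value theorem: since $\Sigma = \mathcal{Z}(\sigma) \setminus \mathcal{Z}(n)$ is already known (from the preceding proposition) to be a smooth embedded hypersurface of $M$, it suffices to show that at every $p \in \Sigma^0$ the two $1$-forms $\nabla_a \sigma$ and $\nabla_a \lambda$ are linearly independent (after choosing a scale so that the density weights are trivialised and we genuinely obtain ordinary $1$-forms). Then $\Sigma^0 = (\mathcal{Z}(\sigma) \cap \mathcal{Z}(\lambda)) \setminus \mathcal{Z}(n)$ arises as the transverse intersection of two smooth hypersurfaces and is automatically a smooth embedded submanifold of codimension $2$ in $M$.

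The main tool is to translate this linear independence into a statement about the tractors $I^A$ and $X^A$, using the Thomas $D$-operator and Lemma~\ref{Lemma: Relations ISigma, XLambda, etc}. Suppose, for contradiction, that $\tilde{\alpha}\,(\nabla_a \sigma)_p + \tilde{\beta}\,(\nabla_a \lambda)_p = 0$ for some scalars $(\tilde{\alpha}, \tilde{\beta}) \neq (0,0)$. Contracting with $Z^a_A$ and using that $\sigma(p) = \lambda(p) = 0$ on $\Sigma^0$, the defining relation~\eqref{ThomasD} reduces this to
\[
\tilde{\alpha}\, D_A \sigma\big|_p + \tilde{\beta}\, D_A \lambda\big|_p = 0.
\]
Substituting the identities \eqref{RelationISigma} and \eqref{RelationXLambda}, and then applying the pointwise inverse $H^{CA}$ (which exists by non-degeneracy of $H^{AB}$), this becomes
\[
\tilde{\alpha}\, I^C\big|_p + 2\tilde{\beta}\, X^C\big|_p = 0.
\]
Since $p \notin \mathcal{Z}(n)$, the tractor $I^A$ is not proportional to $X^A$ at $p$, so $\tilde{\alpha} = \tilde{\beta} = 0$, a contradiction.

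I do not anticipate a serious obstacle: the proof reduces, via the tractor identities of Lemma~\ref{Lemma: Relations ISigma, XLambda, etc} and the injectivity of the splitting $\omega_a \mapsto \omega_a Z^a_A$ from $T^*M$ into $\mathcal{T}^*$, to the linear independence of the tractors $I^A$ and $X^A$, which is precisely the content of the hypothesis $n^a \neq 0$. The only point needing minor care is the bookkeeping of the different projective weights of $\sigma$ and $\lambda$; a convenient way to handle it is to fix a nowhere-vanishing scale $\tau$ locally and pass to the smooth functions $s = \sigma/\tau$ and $\ell = \lambda/\tau^2$, whose differentials at $p$ differ from $\nabla_a \sigma$ and $\nabla_a \lambda$ only by nonzero scalar factors, so linear independence of the latter is equivalent to linear independence of $ds_p$ and $d\ell_p$.
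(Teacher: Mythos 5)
Your proof is correct and follows essentially the same route as the paper: both arguments evaluate $D_A\sigma=\Phi_{AB}I^B$ and $D_A\lambda=2\Phi_{AB}X^B$ on $\mathcal{Z}(\sigma)\cap\mathcal{Z}(\lambda)$, use the injectivity of $\omega_a\mapsto\omega_a Z^a_A$ together with the invertibility of $\Phi_{AB}$ (equivalently of $H^{AB}$), and reduce linear (in)dependence of $\nabla_a\sigma,\nabla_a\lambda$ to the statement that $X^A\not\propto I^A$ away from $\mathcal{Z}(n)$. Your extra remarks on trivialising the density weights and on transversality are just the bookkeeping the paper leaves implicit.
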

\begin{proof}
	From Eqs.~\eqref{RelationISigma} and \eqref{RelationXLambda} one has $D_A\lambda= 2\Phi_{AB}X^B$ and $D_A\sigma= \Phi_{AB}I^B$. When evaluated at points of $\mathcal{Z}(\sigma)\cap\mathcal{Z}(\lambda)$ these give
	\begin{align}
	2\Phi_{AB}X^B &= 0 \,Y_A + \nabla_a\lambda \,Z^a_{A} &  \Phi_{AB}I^B &= 0 \,Y_A + \nabla_a\sigma \,Z^a_{A}.
	\end{align}
	To conclude the proof we need to show that there are no points of $\Sigma^0$ where $\nabla_a\lambda \propto \nabla_a\sigma$. However, at such points we would have for some density $f$:
\begin{equation}
	2\Phi_{AB}X^B + f \Phi_{AB}I^B =0.
	\end{equation}
	Since $\Phi_{AB}$ is invertible by hypothesis, this would mean $2X^B + f I^B =0$ which is excluded by definition of $\Sigma^0$.
\end{proof}

\subsection{Derivation of Theorem \ref{Thrm: Curved orbits decomposition: quotient}}\label{Proof3point2}

We will first need the following
\begin{lemm} \label{CovariantDerivativesDirectionn}
	In a generic scale $\nabla$, one has
	\begin{align}
	n^a \nabla_a \sigma &= - \rho \sigma \\
	n^a \nabla_a \lambda &= 2\sigma - 2\lambda \rho
	\end{align}	
\end{lemm}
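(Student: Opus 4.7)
The plan is to derive both identities in parallel by contracting the Thomas $D$-operator formulas of Lemma \ref{Lemma: Relations ISigma, XLambda, etc} with the parallel tractor $I^A$. The key preliminary observation is that, for any density $T$ of weight $w$, the definition \eqref{ThomasD} of $D_A$ gives
\begin{equation*}
I^A D_A T = (I^A Z^a_A)\,\nabla_a T + w\,(I^A Y_A)\,T = n^a \nabla_a T + w\rho\, T,
\end{equation*}
where I have used the decomposition $I^A = n^a W_a^A + \rho X^A$ together with the standard orthogonality relations $W^A_a Y_A = 0$, $W^A_a Z^b_A = \delta^b_a$, $X^A Y_A = 1$, $X^A Z^a_A = 0$ of the splitting operators. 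In particular, $I^A Z^a_A = n^a$ and $I^A Y_A = \rho$.

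For the first identity, $\sigma \in \Gamma(\mathcal{E}(1))$ has weight $w = 1$, and relation \eqref{RelationISigma} of Lemma \ref{Lemma: Relations ISigma, XLambda, etc} reads $D_A \sigma = \Phi_{AB} I^B$. Contracting the above display with $I^A$ yields $n^a \nabla_a \sigma + \rho \sigma$ on the left, while the right-hand side becomes $I^A \Phi_{AB} I^B = 0$ by the null condition on $I^A$. This immediately gives the first equation $n^a \nabla_a \sigma = -\rho \sigma$.

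For the second identity, $\lambda \in \Gamma(\mathcal{E}(2))$ has weight $w = 2$, and relation \eqref{RelationXLambda} reads $D_A \lambda = 2\Phi_{AB} X^B$. Contracting with $I^A$ now yields $n^a \nabla_a \lambda + 2\rho \lambda$ on the left, and on the right $2 I^A \Phi_{AB} X^B = 2\sigma$ by the very definition \eqref{DefSigma}. This immediately gives $n^a \nabla_a \lambda = 2\sigma - 2\rho\lambda$.

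There is no real obstacle in this proof: the whole argument reduces to correctly tracking the projective weights of $\sigma$ and $\lambda$ in the Thomas $D$-formula, and invoking the null condition $\Phi_{AB} I^A I^B = 0$ together with the definition of $\sigma$. The proof could alternatively be presented in purely tangent-index form by computing $\nabla_a \sigma$ and $\nabla_a \lambda$ in an arbitrary scale using \eqref{TractorConnection} and \eqref{IParallel} and then contracting with $n^a$, but the tractor computation above is more compact and makes the origin of each term transparent.
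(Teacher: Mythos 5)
Your proof is correct and follows essentially the same route as the paper: contracting $D_A\sigma=\Phi_{AB}I^B$ and $D_A\lambda=2\Phi_{AB}X^B$ with the parallel null tractor $I^A$, using the weight-sensitive Thomas $D$ formula together with $\Phi_{AB}I^AI^B=0$ and the definition $\sigma=\Phi_{AB}I^AX^B$. The only cosmetic difference is that you state the contraction $I^AD_A T=n^a\nabla_a T+w\rho T$ once for general weight $w$, whereas the paper writes out the two cases separately.
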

\begin{proof}
	Since $I$ is null, it follows from Eq.~\eqref{RelationISigma} that: $I^AD_A\sigma=0$. In a generic scale $\nabla$ we have:
	\[ I^A= n^aW_a^A+\rho X^A, \quad D_A\sigma=\sigma Y_A + \nabla_a\sigma Z^a_A,\]
	and therefore \[I^AD_A\sigma = n^a\nabla_a\sigma + \rho\sigma =0.\]
	Similarly from Eq.~\eqref{RelationXLambda}, and by definition of $\sigma$, we have:
	\[ 2\sigma=I^BD_B\lambda= n^a\nabla_a\lambda + 2\rho\lambda. \]
\end{proof}

The weight $-1$ vector field $n^a$ determines a distinguished family of unparametrised curves on $\widetilde{M}$. These are precisely the integral curves of any vector field $N^a$ obtained from $n^a$ by multiplying by a nowhere vanishing density $\nu$ of weight $1$ on $\widetilde{M}$; only the parametrisation depends on the choice of $\nu$. Since $\widetilde{M}$ is compact, any choice of $N^a$ leads to a complete vector field on $\widetilde{M}$ which restricts to a complete vector field on $M$.
Lemma~\ref{CovariantDerivativesDirectionn} shows that $n^a$ is tangent to the submanifolds $\Sigma$ and $\Sigma^0$ from which it follows immediately:
\begin{coro}
	For any choice of nowhere vanishing density $\nu$ of weight $1$ on $\tilde{M}$, the action of $(\R,+)$ given by the global flow of the vector field $N=\nu n$ on $N$, preserves the decomposition~\eqref{Decomposition of the manifold}.
\end{coro}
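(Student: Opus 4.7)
The plan is to reduce the invariance statement to elementary linear ODEs along the integral curves of $N$. Fix a scale, most conveniently the one determined by $\nu$ itself, which turns the weight-$1$ density $\sigma$ into a function $s=\sigma/\nu$, the weight-$2$ density $\lambda$ into a function $\ell=\lambda/\nu^{2}$, and the scale component $\rho$ of $I^{A}$ into a function on $\widetilde M$. In this scale $N=\nu n$ is a genuine (complete) vector field on $\widetilde M$, so standard flow theory applies.

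The two identities of Lemma~\ref{CovariantDerivativesDirectionn} translate directly into
\begin{equation*}
N^{a}\nabla_{a} s = -\nu\rho\, s, \qquad N^{a}\nabla_{a}\ell = 2\nu\, s - 2\nu\rho\,\ell,
\end{equation*}
which, because $s$ and $\ell$ are genuine functions, are just
$\dot s = -\nu\rho\, s$ and $\dot\ell = 2\nu\, s - 2\nu\rho\,\ell$ along any integral curve $c(t)$ of $N$. The first is a linear homogeneous ODE in $s$; by uniqueness its zero locus is flow-invariant, so $\mathcal Z(\sigma)$ and its complement are preserved, which gives the invariance of $\mathcal O$ and of $\Sigma$. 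Restricting the second equation to an integral curve contained in $\Sigma$, where $s\equiv 0$, the forcing term drops out and one is left with $\dot\ell=-2\nu\rho\,\ell$, again linear homogeneous. Hence along such a curve $\ell$ keeps its sign and stays zero if it starts at zero, proving that $\Sigma^{-}$, $\Sigma^{0}$ and $\Sigma^{+}$ are each preserved.

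It remains to handle $\mathcal Z(n)$: there $N=\nu n$ vanishes identically, so every point of $\mathcal Z(n)$ is a fixed point of the flow and the set is trivially preserved. The only small point that has to be checked is scale-independence of the conclusion, but the zero set and sign of $s$ (respectively $\ell$) depend only on $\sigma$ (respectively $\lambda$), since a change of scale multiplies these functions by $\nu^{-1}$ (respectively $\nu^{-2}$) which is strictly positive.

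Honestly there is no serious obstacle: the statement is essentially the integrated form of Lemma~\ref{CovariantDerivativesDirectionn}, and the only mild subtlety is organising the weight bookkeeping so that one genuinely obtains scalar ODEs on $\widetilde M$ rather than density identities.
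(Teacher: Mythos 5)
Your proof is correct and takes essentially the same route as the paper: the paper also deduces the corollary directly from Lemma~\ref{CovariantDerivativesDirectionn} (which shows $n^a$ is tangent to $\Sigma$ and $\Sigma^0$), and your integration of those identities into linear ODEs along the flow lines merely makes explicit the steps ``tangency implies invariance'', the sign preservation of $\lambda$ on $\Sigma$, and the fixed-point nature of $\mathcal{Z}(n)$. One harmless weight slip: in the scale determined by $\nu$ the second equation should read $\dot\ell = 2s - 2(\nu\rho)\ell$ rather than $\dot\ell = 2\nu s - 2\nu\rho\,\ell$ (the term $2\nu s$ has weight $1$, not $0$), but this does not affect the argument since the forcing term still vanishes exactly on $\{s=0\}$.
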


\begin{defi}
	Define $\Q$ to be the quotient space $M/\R$, equipped with the quotient topology, where the action of $\R$ is provided by the flow of any $N=\nu n$ and $\nu$ is a nowhere vanishing scale on $M$ that has a smooth extension to $\widetilde{M}$.

Let $\pi: M \rightarrow \Q$ be the natural projection, we define the distinguished open subset $\Q_\mathcal{O}=\pi(\mathcal{O})$ of $\Q$ ($\pi$ is an open map). Since $\mathcal{O}$ is stable under the action of $\R$ this can also be defined as the quotient space $\mathcal{O}/\R$ where the action of $\R$ is that of the flow of $N^\sigma=\sigma n$ equipped with the quotient topology.
\end{defi}
 We will now show that $\Q_\mathcal{O}$ has a natural manifold structure.
\begin{theo}\label{TheoFibreBundleInside BIS}
	Let $\pi: \mathcal{O} \rightarrow \Q_\mathcal{O}$ be the canonical projection. Then there is a unique smooth structure on $\Q_\mathcal{O}$ such that $\pi$ has the structure of a (right) $\R$-principal fibre bundle. The zero set of the function $l:= \sigma^{-2}\lambda : \mathcal{O} \to \mathbb{R}$ defines a global section and yields a preferred global diffeomorphism
	\begin{equation*}
	\begin{array}{ccc}
	\mathcal{O} & \simeq &\mathbb{R} \times \Q_\mathcal{O}\\
	x & \mapsto & ( \frac{1}{2}l(x) , \pi(x) )
	\end{array}.
	\end{equation*}
In particular, $\omega_a=\frac{1}{2} \nabla_a l$ is a canonical principal connection.
\end{theo}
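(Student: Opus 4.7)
The plan is to reduce the statement to a standard free-action trivialization via a single computation and one key regularity observation. First, using the Leibniz rule and Lemma \ref{CovariantDerivativesDirectionn}, I would establish the central identity
\[ N^\sigma\cdot l \;=\; \sigma\, n^a\nabla_a(\sigma^{-2}\lambda) \;=\; -2\sigma^{-2}(n^a\nabla_a\sigma)\,\lambda + \sigma^{-1}(n^a\nabla_a\lambda) \;=\; 2\rho\sigma^{-1}\lambda + 2 - 2\rho\sigma^{-1}\lambda \;=\; 2. \]
Consequently $dl$ is everywhere non-vanishing on $\mathcal{O}$, so $S := l^{-1}(0)$ is a smooth embedded hypersurface of $\mathcal{O}$ and $N^\sigma$ is transverse to every level set of $l$.

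Next, the crucial regularity observation: the weight $0$ vector field
\[ X \;:=\; \tfrac{1}{2}N^\sigma \;=\; \tfrac{\sigma}{2}\, n, \]
being the product of a weight $+1$ density with a weight $-1$ weighted vector field, is a \emph{genuine} smooth vector field on all of $\widetilde{M}$, which vanishes identically on $\Sigma\cup\mathcal{Z}(n)=\widetilde{M}\setminus\mathcal{O}$. By compactness of $\widetilde{M}$ its flow $\phi^X_t$ is globally defined, and since $\mathcal{O}$ is invariant (its complement consists of fixed points) the restriction provides a complete flow on $\mathcal{O}$. This single remark is the main point of the proof: it replaces a potentially delicate ODE analysis near $\Sigma^0$ and $\mathcal{Z}(n)$ by an automatic compactness argument on $\widetilde{M}$.

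With completeness in hand the rest is classical. Since $X\cdot l = 1$, the map $\Phi:\R\times S\to \mathcal{O}$, $(t,y)\mapsto \phi^X_t(y)$, satisfies $l\circ\Phi(t,y) = t$ and admits the smooth inverse $x\mapsto(l(x),\phi^X_{-l(x)}(x))$; it is therefore a diffeomorphism. Each $N^\sigma$-orbit consequently meets $S$ in exactly one point, $\pi|_S:S\to\Q_\mathcal{O}$ is a bijection, and I endow $\Q_\mathcal{O}$ with the unique smooth structure making $\pi|_S$ a diffeomorphism. Composing $\Phi^{-1}$ with the rescaling $t\mapsto t/2$ (to match the normalization of $N^\sigma$) produces the announced diffeomorphism $x\mapsto(\tfrac{1}{2}l(x),\pi(x))$. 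Under this trivialization the $\R$-action generated by $N^\sigma$ is translation in the first factor, manifestly free and proper, so $\pi$ inherits the structure of a trivial principal $\R$-bundle. Uniqueness of the smooth structure follows because any structure making $\pi$ a principal bundle also makes it a submersion, forcing $\pi|_S$ to be a local diffeomorphism on top of being a bijection.

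Finally, since $l$ has weight $0$ we simply have $\omega = \tfrac{1}{2}dl$. The identity of the first paragraph gives $\omega(N^\sigma)=1$, and Cartan's formula yields $L_{N^\sigma}\omega = d\iota_{N^\sigma}\omega + \iota_{N^\sigma}d\omega = d(1) + 0 = 0$, so $\omega$ is $\R$-invariant with the correct normalization on the fundamental vector field, and hence a principal connection (in fact a flat one).
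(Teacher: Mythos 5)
Your proof is correct, but it reaches the conclusion by a different route than the paper. The paper invokes the characterisation of principal bundles via free and proper actions (Sharpe, Theorem 4.2.4): it proves freeness from the strict monotonicity of $l$ along orbits (ruling out periodic orbits and fixed points) and properness from the linear growth $l(\phi^N_t(x))=l(x)+2t$ on compact sets, and only afterwards reads off the trivialisation from $\tfrac12 l$. You instead build the trivialisation directly: the same key identity $N^\sigma\cdot l=2$ makes $S=l^{-1}(0)$ a global slice, and the flow out of $S$ gives an explicit diffeomorphism $\mathcal{O}\simeq\R\times S$, so the principal bundle structure (indeed its triviality) and the uniqueness of the smooth structure are obtained without appealing to the free-and-proper criterion. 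Two things your argument buys beyond the paper's: (i) you make explicit the completeness of the flow of $N^\sigma=\sigma n$ on $\mathcal{O}$, by noting that $\sigma n$ is a genuine (weight $0$) vector field on the compact $\widetilde{M}$ vanishing on $\widetilde{M}\setminus\mathcal{O}$, so that $\mathcal{O}$ is flow-invariant and the $\R$-action is globally defined -- a point the paper leaves implicit, since its earlier completeness remark is phrased for nowhere-vanishing scales $\nu$ on $\widetilde{M}$; (ii) you address uniqueness of the smooth structure via the observation that any principal-bundle structure makes $\pi$ a submersion and hence $\pi|_S$ a diffeomorphism. The only point worth adding is a one-line check that the smooth structure you transport from $S$ induces the quotient topology already placed on $\Q_\mathcal{O}$; this is immediate since, under your trivialisation, $\pi$ becomes the product projection $\R\times S\to S$, which is an open quotient map, so the two topologies agree. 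Your verification that $\omega=\tfrac12\dd l$ is a flat principal connection matches the paper's statement and is fine.
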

\begin{proof}[Proof of Theorem~\ref{TheoFibreBundleInside BIS}] We apply the characterisation of principle fibre bundles~\cite[Theorem~\textbf{4}.2.4]{Sharpe:1997aa}. It suffices to show that action is free and proper.
	We shall work in the distinguished scale determined by $\sigma$ on $\mathcal{O}$. We define $l=\sigma^{-2}\lambda$, and let $\gamma$ denote an arbitrary integral curve of $N=\sigma n$, then Eq.~\eqref{sigma scale => N^a nabla_a l= 2} leads to:
	\begin{equation}\label{LambdaAlongCurve BIS}
	 \frac{\dd}{\dd t}\lambda(\gamma(t))= (N^a\nabla_a l)|_{\gamma(t)}=2.
	\end{equation}
	It follows that $l=2t +l_0$ along the curve $N$, consequently $l$ is strictly monotonous and unbounded along such a curve.
	Thus the action is:
	\begin{itemize}
		\item free: the isotropy subgroup of any point $x$ is either $\R$ (i.e. $\forall t \in \R, \phi^N_t(x)=x$) or $T\mathbb{Z}$ for some $T\in \R$. The first case is excluded as $n^a$ does not vanish on $\mathcal{O}$. If it was of the form $T\mathbb{Z}$ with $T>0$ then the curve $t\mapsto \phi^N_t(x)$ would be periodic, but this is excluded because $l$ is strictly monotonous along such a curve. Hence, $T=0$.
		\item proper: Let $K$ be a compact subset of $M\setminus \Sigma$, then, $l$, being continuous, is bounded above and below on $K$ say: $\forall x \in K, m \leq l(x)\leq M$.
		Let $x \in K$, then:
		\[  l(\phi^N_t(x)) =  2 t + l(x) \geq 2t + m. \]
		So for any $x\in K$, when $t \geq (\frac{M-m +1}{2})$ $\phi^N_t(x)$ has definitively left $K$.
		This implies that $\{t\in \R,  t K \cap K \neq \emptyset\}$ is compact.
	\end{itemize}
Finally, Eq.~\eqref{LambdaAlongCurve BIS} implies that $\frac{1}{2}l(\phi^N_t(p))=\frac{1}{2}l(p) + t$, which shows that  $\frac{1}{2}l$ provides a canonical trivialisation of $\mathcal{O}$.
\end{proof}

\begin{rema}
For future computations it will be useful to note that $N$ is by definition a fundamental vector field on $\mathcal{O}$. 
Recall that if $P$ is a principal $G$ bundle then the fundamental vector fields $X^*$ associated with $X\in \mathfrak{g}$ is defined at $p\in P$ by
\[ A^*_p=\left.\frac{\dd}{\dd t}(p \exp(Xt))\right|_{t=0}, \]
in the simple case we consider of the abelian Lie algebra $\R$, the exponential map satisfies $\exp(1t)=t$ therefore:
\[ 1^*_x = \left. \frac{\dd}{\dd t}(p\cdot t)\right|_{t=0}=  \left. \frac{\dd}{\dd t}\phi^N_t(x)\right|_{t=0}=N_x. \]
\end{rema}

It is not clear that $\Q$ itself can be equipped with a smooth structure, as the behaviour of $n$ near $\Sigma$ (to which it is tangent) is unclear. However, to simplify our discussion we shall make the assumption that $\Q$ can be equipped with a smooth structure such that $\pi : M\rightarrow \Q $ is a fibre bundle with fibre $\R$.

We conclude this section by recording the following useful result.
\begin{lemm} \label{LieDerivativeofg}
	Let $\nu$ be any nowhere vanishing scale and $N^a = \nu n^a$, $g^{ab} = \nu^2 \zeta^{ab}$ then
	\begin{equation}
	\mathcal{L}_{N} g^{ab} = 2 N^{(a} \lambda^{b)} + 2 \rho g^{ab},
	\end{equation}
where $\lambda^{a}$ and  $\rho$  are respectively defined by equations \eqref{DH=0}  and \eqref{IParallel}.
\end{lemm}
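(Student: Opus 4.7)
The plan is to fix the distinguished scale $\nabla^\nu$ in which the weight~$1$ density $\nu$ is parallel. In this scale all the weighted objects appearing in the statement trivialise to ordinary tensor fields on $M$: after this identification (which we denote with the same symbols by abuse of notation) we have $N^a = n^a$ and $g^{ab} = \zeta^{ab}$ as honest tensors, and the parallel equations \eqref{IParallel} and \eqref{DH=0} reduce to
\[ \nabla_c n^a = -\rho\,\delta_c^a, \qquad \nabla_c \zeta^{ab} = 2\,\delta_c^{(a}\lambda^{b)}. \]

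Next I would insert these into the standard formula for the Lie derivative of a $(2,0)$-tensor along $N$, computed with respect to the torsion-free connection $\nabla^\nu$:
\[ \mathcal{L}_N g^{ab} = N^c \nabla_c g^{ab} - g^{cb} \nabla_c N^a - g^{ac} \nabla_c N^b. \]
The first term then becomes $n^c\bigl(\delta_c^a \lambda^b + \delta_c^b \lambda^a\bigr) = 2 n^{(a} \lambda^{b)}$, while each of the remaining two terms produces a contribution $-(-\rho)\,\zeta^{ab} = \rho\,\zeta^{ab}$, adding up to $2\rho\,\zeta^{ab}$. Re-expressing the result in terms of $N$ and $g$ gives the desired identity.

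The only genuine subtlety—really the sole point requiring care—is the bookkeeping of projective weights: the symbols $\lambda^a$ and $\rho$ in the statement must be read as the (unweighted) components obtained in the $\nu$-scale of the weighted sections that appear in \eqref{DH=0} and \eqref{IParallel}. Once this convention is made explicit and one works consistently in $\nabla^\nu$, the computation collapses to a one-line substitution and no further manipulation is needed. In particular, since $\mathcal{L}_N g^{ab}$ is intrinsically defined on $M$, the formula is independent of the choice of torsion-free representative we computed in, which is consistent with the fact that $\nabla^\nu$ is merely a convenient choice that trivialises the density factors.
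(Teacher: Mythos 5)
Your proof is correct and is essentially the paper's own argument: both apply the standard torsion-free identity $\mathcal{L}_{N} g^{ab} = \nabla_{N} g^{ab} - 2\,\nabla_c N^{(a} g^{b)c}$ and substitute the component equations coming from \eqref{DH=0} and \eqref{IParallel}. Your explicit choice of the connection $\nabla^{\nu}$ preserving $\nu$, together with the remark that $\lambda^a$ and $\rho$ are to be read as their $\nu$-trivialised components, merely makes precise a weight convention that the paper leaves implicit (and which is indeed how the lemma is used later, e.g.\ with $\nu=\sigma$ in Proposition \ref{Proposition: canonical form on O}).
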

\begin{proof}
	For any vector field $X$, we have the identity
	\[ \mathcal{L}_{X}g^{ab} = \nabla_X g^{ab} - 2 \nabla_c X^{(a} g^{b)c}. \]
	Taking $X^a = N^a$  and  making use of \eqref{DH=0} and  \eqref{IParallel} one obtains
	\begin{equation}
	\mathcal{L}_{N} g^{ab} = 2 N^{(a} \lambda^{b)}  + 2 \rho g^{ab}.
	\end{equation}
\end{proof}

\section{Geometry of the open orbit $\mathcal{O}$}\label{section: Geometry of the open orbit O}

In this section we restrict ourselves to the curved orbit $\mathcal{O}$ on which $\sigma$ is nowhere vanishing. We can therefore form the undensitised bi-vector, vector field and function
\begin{align}
g^{ab} &= \sigma^2 \zeta^{ab}, & N^a &= \sigma n^a&  l&=\sigma^{-2}\lambda.
\end{align}

The scale $\nabla$ determined by $\sigma$ (i.e. such that $\nabla\sigma=0$) has the following important properties:
\begin{lemm}\label{Lemma: sigma scale => identities} 
	In the scale $\nabla$ determined by $\nabla \sigma =0$, we have:
	\begin{align}
	\nabla_c n^a &= -\delta^a_c \rho =0, \label{sigma scale => rho=0} \\
	\lambda n^a&=-\frac{1}{2}\sigma \zeta^{ab}\nabla_b\lambda,  \label{sigma scale => n=grad(l)} \\ 
	\nabla_c \zeta^{ab}&=-2\sigma^{-1}\delta^{(a}_{c}n^{b)} \label{sigma scale => lambda^a =n^a}.
	\end{align}
\end{lemm}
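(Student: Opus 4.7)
The plan is to work throughout in the scale $\nabla$ determined by $\sigma$ (so that $\nabla\sigma=0$), and to derive all three identities by combining the parallelism of $I^A$ and $H^{AB}$ with the algebraic relations from Lemma~\ref{Lemma: Relations ISigma, XLambda, etc}.

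For \eqref{sigma scale => rho=0}, I would invoke Lemma~\ref{CovariantDerivativesDirectionn}, which gives $n^a\nabla_a\sigma=-\rho\sigma$ in any scale. Since $\nabla\sigma=0$ in the chosen scale and $\sigma\neq 0$ on $\mathcal{O}$, this forces $\rho=0$. Substituting into \eqref{IParallel} then yields $\nabla_c n^a=-\delta^a_c\rho=0$.

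The key algebraic observation behind both \eqref{sigma scale => n=grad(l)} and \eqref{sigma scale => lambda^a =n^a} is that, because $\nabla\sigma=0$, the Thomas--$D$ expansion $D_B\sigma=\sigma Y_B+(\nabla_b\sigma)Z^b_B$ collapses to $D_B\sigma=\sigma Y_B$. Substituting into \eqref{RelationISigma} gives $H^{AB}Y_B=\sigma^{-1}I^A$, and projecting with $Z^a_A$ produces the compact identity $Z^a_A H^{AB}Y_B=\sigma^{-1}n^a$, which I will use twice. For \eqref{sigma scale => n=grad(l)}, I would dualise \eqref{RelationXLambda} by contracting with $H^{CA}$ to obtain $X^C=\tfrac12 H^{CA}D_A\lambda$, expand $D_A\lambda=2\lambda Y_A+(\nabla_a\lambda)Z^a_A$, and project with $Z^c_C$. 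The left-hand side vanishes because $Z^c_C X^C=0$, the $Y$-term contributes $\lambda\sigma^{-1}n^c$ via the boxed identity, and the $Z$-term contributes $\tfrac12\zeta^{ca}\nabla_a\lambda$ via $Z^c_C H^{CA}Z^a_A=\zeta^{ca}$; rearranging gives the claim.

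For \eqref{sigma scale => lambda^a =n^a}, I would exploit $\zeta^{ab}=Z^a_A Z^b_B H^{AB}$, apply $\nabla_c$, and use $\nabla_c H^{AB}=0$ together with $\nabla_c Z^a_A=-\delta^a_c Y_A$ from \eqref{TractorConnection}. This collapses the Leibniz expansion to $\nabla_c\zeta^{ab}=-\delta^a_c(Y_A H^{AB}Z^b_B)-\delta^b_c(Z^a_A H^{AB}Y_B)$, and each parenthesis equals $\sigma^{-1}n^\bullet$ by the boxed identity and the symmetry of $H^{AB}$, giving the stated formula. The only point that requires care is the sign and normalisation in \eqref{sigma scale => lambda^a =n^a}: proceeding directly via the tractor connection as above fixes it unambiguously and sidesteps having to chase through the decomposition conventions in \eqref{H decomposition}--\eqref{DH=0} to identify the value of $\lambda^a$ in the $\sigma$-scale.
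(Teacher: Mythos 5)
Your proof is correct and follows essentially the same route as the paper: \eqref{sigma scale => n=grad(l)} and \eqref{sigma scale => lambda^a =n^a} are obtained by exactly the same contractions of \eqref{RelationXLambda} and of $\zeta^{ab}=Z^a_A H^{AB} Z^b_B$, using the identity $H^{AB}Y_B=\sigma^{-1}I^A$ that comes from $D_B\sigma=\sigma Y_B$ in the $\sigma$-scale together with \eqref{RelationISigma}. The only cosmetic difference is that you deduce $\rho=0$ from Lemma~\ref{CovariantDerivativesDirectionn} (legitimate, since it is established earlier) instead of the paper's direct computation $\rho=Y_AI^A=\sigma^{-1}I^AD_A\sigma=\sigma^{-1}I^2=0$, which rests on the same ingredients.
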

\begin{proof}
	Equations \eqref{sigma scale => rho=0} follows from Eqs.~\eqref{IParallel}, \eqref{RelationISigma}, $I^2=0$ and the definition of the scale:
	\begin{equation*}
	\rho = I^AY_A= I^A(\sigma^{-1}D_A\sigma) =\sigma^{-1}I^2 =0.
	\end{equation*} 
	To prove the second relation $\eqref{sigma scale => n=grad(l)}$ we remark that from Eq.~\eqref{RelationXLambda} one has: $X^A=\frac{1}{2}H^{AB}D_B\lambda$. Contracting with $Z_A^a$ then gives
	\[0=\frac{1}{2}Z_A^aH^{AB}D_B\lambda= \frac{1}{2}\zeta^{ab}\nabla_b\lambda + Z_A^a\underbrace{H^{AB}Y_B}_{\sigma^{-1}H^{AB}D_B\sigma}\lambda=\frac{1}{2}\zeta^{ab}\nabla_b\lambda +\sigma^{-1}n^a\lambda,\]
	where we have used Eq.~\eqref{RelationISigma}. Finally, the last relation $\eqref{sigma scale => lambda^a =n^a}$  comes from:
	\[\nabla_c \zeta^{ab}= \nabla_c Z_A^aH^{AB}Z_B^b=-\delta^{a}_{c}Y_AH^{AB}Z_B^b-\delta^{b}_{c}Z_B^bH^{AB}Y_B=-2\sigma^{-1}\delta^{(a}_{c}n^{b)}.\]
\end{proof}

\begin{rema}Since $\sigma$ is covariantly constant for the connection $\nabla$, Eqs.~\eqref{sigma scale => rho=0},\eqref{sigma scale => n=grad(l)}, and~\eqref{sigma scale => lambda^a =n^a} translate directly in terms of the quantities $g,N,f$:
\[\nabla_c N^a =0, \quad N^a= -\frac{1}{2}l^{-1}g^{ab}\nabla_b l, \quad \nabla_c g^{ab}=-2\delta_{c}^{(a}N^{b)}.\] \end{rema}

Equation \eqref{sigma scale => rho=0} means that the integral lines of $N^a$ are geodesically parametrised for $\nabla$. In fact, combining this with Lemma~\ref{CovariantDerivativesDirectionn}, one sees that
\begin{equation}\label{sigma scale => N^a nabla_a l= 2}
N^a \nabla_a l= 2
\end{equation}
in other terms $l$ coincides with this parametrisation. This however fails to be a gradient flow for $g^{ab}$: Eq.~\eqref{sigma scale => n=grad(l)} recovers that $g^{ab}$ is degenerate along $l=0$, and one rather has $N^a = -\frac{1}{2} g^{ab}(l^{-1} \nabla_b l)$. The last identity \eqref{sigma scale => lambda^a =n^a} is crucial to derive the following canonical form:
\begin{prop}\label{Proposition: canonical form on O}
	\begin{equation}
	g^{ab} = -l N^a N^b + \tilde{h}^{ab}
	\end{equation}
	where $\tilde{h}^{ab}\nabla_b l=0$ and $\mathcal{L}_{N}\tilde{h}^{ab}=0$. 
\end{prop}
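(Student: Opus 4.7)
The plan is to define $\tilde{h}^{ab} := g^{ab} + l\,N^a N^b$ directly and verify the two required properties using the identities collected in Lemma \ref{Lemma: sigma scale => identities}. Throughout, I will work in the scale determined by $\sigma$, so that $\nabla\sigma=0$, $\rho=0$, and all the density weights are absorbed cleanly into the undensitised quantities $g^{ab},N^a,l$. The key preliminary step is to translate the densitised identities of Lemma \ref{Lemma: sigma scale => identities} into undensitised form. Since $\nabla\sigma=0$:
\begin{align*}
\nabla_c N^a &= \sigma\,\nabla_c n^a + n^a\nabla_c\sigma = 0,\\
\nabla_c g^{ab} &= \sigma^2\nabla_c\zeta^{ab} = -2\sigma\,\delta_c^{(a}n^{b)} = -2\delta_c^{(a}N^{b)},\\
g^{ab}\nabla_b l &= -2\,l\,N^a,
\end{align*}
where the last line is obtained from Eq.~\eqref{sigma scale => n=grad(l)} by expressing both sides in terms of $g^{ab}$, $N^a$ and $l$, and also follows from applying $\nabla_c$ to $\sigma^{-2}\lambda$ and invoking Eq.~\eqref{sigma scale => n=grad(l)}.

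For the first property, plug in:
\[ \tilde{h}^{ab}\nabla_b l = g^{ab}\nabla_b l + l\,N^a(N^b\nabla_b l) = -2l\,N^a + l\,N^a\cdot 2 = 0,\]
using $N^b\nabla_b l = 2$ from Eq.~\eqref{sigma scale => N^a nabla_a l= 2}.

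For the Lie derivative property, compute $\mathcal{L}_N g^{ab}$ directly. Since $\nabla_c N^a=0$ in the $\sigma$-scale, the usual formula reduces to
\[ \mathcal{L}_N g^{ab} = N^c\nabla_c g^{ab} - 2 g^{c(a}\nabla_c N^{b)} = N^c\bigl(-2\delta_c^{(a}N^{b)}\bigr) = -2 N^a N^b.\]
(Alternatively this is Lemma \ref{LieDerivativeofg} with $\rho=0$ and $\lambda^a = -\sigma^{-1}n^a$ read off from $\nabla_c\zeta^{ab}=2\delta_c^{(a}\lambda^{b)}$, yielding the same sign.) Next,
\[\mathcal{L}_N\bigl(l\,N^a N^b\bigr) = (N^c\nabla_c l)\,N^a N^b + l\,\mathcal{L}_N(N^a N^b) = 2\,N^a N^b,\]
since $\mathcal{L}_N N = [N,N]=0$. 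Adding the two contributions gives $\mathcal{L}_N \tilde{h}^{ab} = -2 N^a N^b + 2 N^a N^b = 0$, as required.

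There is no serious obstacle here: once the computations are performed in the distinguished $\sigma$-scale, Lemma \ref{Lemma: sigma scale => identities} does all the work. The only point requiring a moment's care is the bookkeeping of projective weights when passing from the densitised identities for $(\zeta^{ab},n^a,\lambda)$ to their undensitised counterparts $(g^{ab},N^a,l)$; using that $\nabla\sigma=0$ in this scale makes the translation immediate.
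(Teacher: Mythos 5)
Your proposal is correct and follows essentially the same route as the paper: both take $\tilde{h}^{ab}:=g^{ab}+l\,N^aN^b$, derive $\mathcal{L}_{N}g^{ab}=-2N^aN^b$ in the $\sigma$-scale, combine it with $N^a\nabla_a l=2$ to get $\mathcal{L}_N\tilde{h}^{ab}=0$, and check $\tilde{h}^{ab}\nabla_b l=0$ from Eq.~\eqref{sigma scale => n=grad(l)}. The only (cosmetic) difference is that you compute $\mathcal{L}_N g^{ab}$ directly from the undensitised identities $\nabla_cN^a=0$ and $\nabla_c g^{ab}=-2\delta_c^{(a}N^{b)}$, whereas the paper cites Lemma~\ref{LieDerivativeofg} and then specialises it to the scale $\sigma$.
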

\begin{proof}
	We recall from Lemma~\ref{LieDerivativeofg} that in a generic scale $\nu$
	\begin{equation}
	\mathcal{L}_{N} g^{ab} = 2 N^{(a} \lambda^{b)} + 2 \rho g^{ab},
	\end{equation}
	where $\lambda^a$ is defined by the relation $\nabla_c \zeta^{ab} - 2 \delta^{(a}_c \lambda^{b)} =0$. 
	When evaluated in the scale $\sigma$ this becomes, by making use of \eqref{sigma scale => rho=0} and \eqref{sigma scale => lambda^a =n^a},
	\begin{equation}
	\mathcal{L}_{N} g^{ab} = -2N^{a} N^b.
	\end{equation}
	Since $N^a \nabla_a l=2$, this is equivalent to $\mathcal{L}_{N}(  g^{ab} + l N^{a} N^b)=0$ and therefore one can write
	\begin{equation}
	g^{ab} = -l N^{a} N^b + \tilde{h}^{ab}
	\end{equation}
	with $\mathcal{L}_{N} \tilde{h}^{ab}=0$. We conclude by computing $\tilde{h}^{ab} \nabla_b l$
	\begin{align}
	\tilde{h}^{ab} \nabla_b l &= g^{ab}\nabla_b l +l N^{a} N^b\nabla_b l\\
	&= -2 l N^a + 2 l N^a =0
	\end{align}
	where we made use of \eqref{sigma scale => n=grad(l)}.
\end{proof}
Using Lemma~\ref{Lemma: sigma scale => identities} we have:
\begin{prop}\label{Proposition: einstein eq sur g}
	At points of $\mathcal{O}$ where $g^{ab}$ is invertible, the corresponding metric is Ricci-flat. 
\end{prop}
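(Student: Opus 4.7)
The plan is to show that the projective Schouten tensor $P_{ab}$ of the $\sigma$-scale connection $\nabla$ vanishes on $\mathcal{O}\cap\{\lambda\neq 0\}$, and then translate this into Ricci-flatness of $g$ via the explicit contorsion between $\nabla$ and the Levi-Civita connection $\nabla^g$ of $g$.

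By Lemma~\ref{Lemma: sigma scale => identities}, in the $\sigma$-scale we have $\nabla_c\zeta^{ab} = -2\sigma^{-1}\delta^{(a}_c n^{b)}$, so the auxiliary vector of the Eastwood-Matveev equation is $\lambda^a = -\sigma^{-1}n^a$. Combined with $\nabla\sigma = 0$ and $\nabla n = 0$ (Lemma~\ref{Lemma: sigma scale => identities} again), this forces $\nabla_c\lambda^a = 0$, and the second normality equation of \eqref{DH=0} reduces to $P_{cb}\zeta^{ba} = \delta^a_c\tau$; on the invertibility region of $\zeta^{ab}$ this gives $P_{cd} = \tau\zeta_{cd}$, so that $R^{\nabla}_{ab} = n\tau\zeta_{ab}$. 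To pin down $\tau$, I would apply the Ricci identity to the parallel vector $n^a$: $\Riem{a}{b}{c}{d}n^d = [\nabla_a,\nabla_b]n^c = 0$. Parallelism of $I^A$, unpacked in the $\sigma$-scale through \eqref{TractorCurvature}, independently yields $\Weyl{a}{b}{c}{d}n^d = 0$. Subtracting these via the special-scale decomposition $\Riem{a}{b}{c}{d} = \Weyl{a}{b}{c}{d} + 2\delta^c_{[a}P_{b]d}$ leaves $\delta^c_{[a}P_{b]d}n^d = 0$; writing $m_a := \zeta_{ab}n^b$ and substituting $P = \tau\zeta$, this reads $\tau(\delta^c_a m_b - \delta^c_b m_a) = 0$. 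Since $\zeta$ is invertible and $n$ nowhere vanishing on $\mathcal{O}$, the covector $m$ is nonzero, forcing $\tau = 0$ and hence $R^{\nabla}_{ab} = 0$.

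To pass from $R^{\nabla} = 0$ to Ricci-flatness of $g$, observe that $\nabla_c g^{ab} = -2\delta^{(a}_c N^{b)}$ by Lemma~\ref{Lemma: sigma scale => identities}, so $\nabla\neq\nabla^g$. The Koszul identity nevertheless produces the explicit contorsion $\nabla^g = \nabla + K$ with $K^a_{cb} = g_{bc}N^a$. Substituting into the standard formula
\[
R^{(g)}_{ab} - R^{\nabla}_{ab} = \nabla_d K^d_{ab} - \nabla_a K^d_{db} + K^d_{de} K^e_{ab} - K^d_{ae}K^e_{db}
\]
and using $\nabla N = 0$ throughout, the four correction terms evaluate to $2N_aN_b$, $-(g_{ab}N_cN^c + N_aN_b)$, $+g_{ab}N_cN^c$ and $-N_aN_b$ respectively, cancelling pairwise; one concludes $R^{(g)}_{ab} = R^{\nabla}_{ab} = 0$.

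The main obstacle is this last cancellation. The contorsion $K^a_{cb} = g_{bc}N^a$ is not of projective form, so $\nabla^g$ and $\nabla$ are not projectively equivalent and there is no a priori reason why their Ricci tensors should coincide. The equality hinges delicately on the precise shape of $K$ dictated by Lemma~\ref{Lemma: sigma scale => identities}, together with the parallelism $\nabla N = 0$ that is peculiar to this holonomy reduction.
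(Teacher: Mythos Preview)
Your argument is correct, and both halves follow a genuinely different route from the paper's. For the vanishing of the Schouten tensor, the paper computes the tractor identity $0 = D_BI^A = D_B(H^{AC}D_C\sigma)$ directly in the $\sigma$-scale and reads off $\zeta^{ac}P_{bc}=0$ from its $W^A_aZ^b_B$-component; you instead extract $P_{cd}=\tau\zeta_{cd}$ from the second normality equation~\eqref{DH=0} and then kill $\tau$ via the combination of $\Riem{a}{b}{c}{d}n^d=0$ and $\Weyl{a}{b}{c}{d}n^d=0$. Your route works, but it is more circuitous than necessary: already $\nabla_c\rho = P_{ca}n^a$ from~\eqref{IParallel} with $\rho\equiv 0$ gives $P_{ca}n^a=0$, hence $\tau m_a=0$ and $\tau=0$, without invoking the Weyl tensor at all. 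For the Ricci correction, the paper rewrites the contorsion terms via $\Upsilon_a=\lambda^{-1}\nabla_a\lambda$ and invokes the tractor identity $\Phi_{AB}=\tfrac12 D_AD_B\lambda$ to recognise $\sigma^2\lambda^{-1}g_{ab}=\tfrac12(\nabla_a\Upsilon_b+\tfrac12\Upsilon_a\Upsilon_b)$, whereas you plug $K^a_{bc}=g_{bc}N^a$ into the standard contorsion--Ricci formula and check the four terms cancel pairwise. Your computation is correct (using $\nabla_cg_{ab}=g_{ca}N_b+g_{cb}N_a$ and $N^aN_a=-l^{-1}$), and arguably more transparent than the paper's, since it avoids the detour through the tractor inverse and makes explicit that the cancellation rests solely on $\nabla N=0$ and the specific shape of $K$.
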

\begin{rema}
Note that the Ricci flat metric $\sigma^2\zeta^{ab}$ of this proposition differs from the Einstein metric with a non-vanishing scalar curvature $\lambda \zeta^{ab}$ that is given by the projective compactification of order 2 as in \cite{Cap:2014aa}.
\end{rema}

\begin{proof}
Calculating in the scale $\nabla$, \[\begin{aligned} 0= D_B I^A =D_B(H^{AC}D_C\sigma)=H^{AC}D_B (\sigma Y_C),
\end{aligned} \]
since $H^{AB}$ is non-degenerate we conclude that $D_B(\sigma Y_C)=0$, therefore,
\[0=D_B(\sigma Y_C)=(D_B\sigma) Y_C + \sigma(D_B Y_C)=\sigma P_{bc}Z^b_BZ^c_C, \]
hence, on $\mathcal{O}$, $P_{bc}=0$.

Define, when $l\neq 0$, a torsion-free connection $\tilde{\nabla}$ by:
\begin{equation}\label{RelationScaleSigmaLeviCivita} \tilde{\nabla}_c \xi^b = \nabla_c \xi^b +N^bg_{cd}\xi^d,\end{equation}
for any vector field $\xi^b$.
\begin{rema}
Note that this is \emph{not} a projective change of connection.
\end{rema}
It is straightforward to check that $\tilde{\nabla}$ is the Levi-Civita connection of $g$. Let us calculate the Ricci tensor $\tilde{R}_{ab}$ of $\tilde{\nabla}$ with respect to that of $\nabla$ which we denote by $R_{ab}$. In fact, $R_{ab}$ vanishes since $\sigma$ is a special scale and $R_{ab}=(n-1)P_{ab}=0$.
We have:
\begin{equation}
\tilde{R}_{ab}= \underbrace{R_{ab}}_{=0} + \underbrace{N^c\nabla_c g_{ab}}_{\frac{1}{2}\Upsilon_a\Upsilon_b} + \frac{1}{2}\nabla_a\Upsilon_b -\frac{1}{4}\Upsilon_a\Upsilon_b-\frac{1}{2}\underbrace{(N^c\Upsilon_c)}_{2\sigma^2\lambda^{-1}}g_{ab}
\end{equation}
Thus:
\begin{equation}
\tilde{R}_{ab}= \frac{1}{2}\nabla_a\Upsilon_b +\frac{1}{4}\Upsilon_a\Upsilon_b-\sigma^2\lambda^{-1}g_{ab}
\end{equation}
where we have introduced: $\Upsilon=\lambda^{-1}\nabla \lambda$.
Using $H^{AB}\Phi_{BC}=\delta^A_C$ and Equation~\eqref{InverseTractorMetric} one finds that:
\[ \lambda^{-1}\sigma^2 g_{ab}= \frac{1}{2} \left(\lambda^{-1}\nabla_a\nabla_b\lambda - \frac{1}{2}\lambda^{-2}\nabla_a \lambda\nabla_b \lambda \right)=\frac{1}{2}\left(\nabla_a \Upsilon_b + \frac{1}{2}\Upsilon_a\Upsilon_b \right),\]
which shows that:
\[ \tilde{R}_{ab}= 0.\]
\end{proof}

Proposition~\ref{Proposition: canonical form on O} suggests that $\tilde{h}^{ab}$ might induce a smooth tensor on the quotient $\Q_{\mathcal{O}}$ of $\mathcal{O}$ by the integral lines of $N^a$. In turn, Eqs~\eqref{sigma scale => lambda^a =n^a} and~\eqref{RelationScaleSigmaLeviCivita} suggest that the connection $\nabla$ factors to the quotient and this factorisation will yield the Levi-Civita connection of such a tensor field; the following results confirm this intuition.

\begin{prop}\label{Proposition: invertible metric on Q_O}
	$g^{ab}$ induces a bilinear form $h^{ab}$ on $\Q_{\mathcal{O}}$ which is everywhere non-degenerate.
\end{prop}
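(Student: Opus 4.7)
The plan is to dispose of descent and non-degeneracy separately, each by reducing to a short calculation in the global trivialisation $\mathcal{O}\simeq\R\times\Q_{\mathcal{O}}$ supplied by Theorem~\ref{TheoFibreBundleInside BIS}. In adapted coordinates $(t,y^i)$ with $t=\tfrac{1}{2} l$, one has $N=\partial_t$ and $\nabla_a l = 2\delta_a^t$, so the two defining properties of $\tilde h^{ab}$ in Proposition~\ref{Proposition: canonical form on O} become $\tilde h^{at}=0$ and $\partial_t\tilde h^{ij}=0$. Consequently $\tilde h^{ab}=h^{ij}(y)\,\partial_i\otimes\partial_j$ for some smooth symmetric bi-vector $h^{ij}$ defined on $\Q_{\mathcal{O}}$, and this is the candidate $h^{ab}$ to be produced; smoothness follows from that of $\tilde h$ together with the fact that $y$ is a coordinate on $\Q_{\mathcal{O}}$ in the chosen trivialisation.

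For non-degeneracy I would appeal to the algebraic lemma at the end of Section~\ref{Projective Tractor Calculus}: since $H^{AB}$ is non-degenerate, the identity $\lambda=X^AX^B\Phi_{AB}=\det(H)\,\detd(\zeta)$ has $\det(H)$ nowhere zero, so $g^{ab}=\sigma^2\zeta^{ab}$ is invertible at a point of $\mathcal{O}$ if and only if $l$ is non-zero there. On the other hand, the canonical form of Proposition~\ref{Proposition: canonical form on O} reads, in these adapted coordinates and in the scale $\sigma$,
\begin{equation*}
g^{ab}=\begin{pmatrix}-l & 0\\ 0 & h^{ij}(y)\end{pmatrix},
\end{equation*}
so that $\det g^{ab}$ factors as $-l\cdot\det h(y)$. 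Comparing vanishing loci forces $\det h(y)$ to be non-zero at every $y$ whose fibre meets $\{l\neq 0\}$; but every $\R$-fibre meets this set (in all but at most one point, since $l=2t$ along fibres), so $\det h\neq 0$ on all of $\Q_{\mathcal{O}}$.

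The only delicate aspect is keeping track of the density weights carried by $\lambda$, $\detd(\zeta)$ and the object $\det g^{ab}$, so that the determinantal comparison above is unambiguously pointwise. This is the main minor obstacle, and I would handle it by working throughout in the scale determined by $\sigma$, in which $\sigma\equiv 1$ and all weighted quantities become genuine functions; the whole argument then reduces to linear algebra, and no new tractor computation is needed beyond the two results already invoked.
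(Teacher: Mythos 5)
Your proposal is correct and follows essentially the same route as the paper: descend $\tilde h^{ab}$ to $\Q_{\mathcal{O}}$ using the two properties from Proposition~\ref{Proposition: canonical form on O}, then read off non-degeneracy from the block decomposition $\mathrm{diag}(-l,h^{ab})$ of $g^{ab}$ together with the fact that $g^{ab}$ is invertible wherever $l\neq0$ and that every fibre meets $\{l\neq0\}$. The only differences are cosmetic or supplementary: you phrase the descent in the adapted coordinates of the trivialisation rather than via pullbacks of $1$-forms, and you explicitly justify the equivalence \enquote{$\zeta^{ab}$ degenerate $\Leftrightarrow\lambda=0$} by the algebraic lemma $\lambda=\det(H)\,\detd(\zeta)$, a fact the paper simply asserts.
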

\begin{proof}
	Let $\alpha,\beta$ be $1$-form fields on $\Q_{\mathcal{O}}$ and consider the smooth function on $\mathcal{O}$ defined by $g(\pi^*\alpha,\pi^*\beta)$, Lemma~\ref{Proposition: canonical form on O} implies that $g(\pi^*\alpha,\pi^*\beta)= \tilde{h}(\pi^*\alpha,\pi^*\beta)$ is constant on the fibres of $\mathcal{O} \to \Q_{\mathcal{O}}$ and so factorises to a well-defined smooth map on $\Q_{\mathcal{O}}$, that we denote by $h(\alpha,\beta)$. It is clearly symmetric and tensorial so determines a bilinear form on $T^*\Q_{\mathcal{O}}$.

	Now $\zeta^{ab}$ is degenerate exactly at points where $\lambda =0$ and so $g^{ab} = \sigma^{2}\zeta^{ab}$ is invertible at any point where $l= \sigma^{-2}\lambda \neq 0$. However, at any point of $\mathcal{O}$, one has the decomposition $T^*\mathcal{O} = \text{Span}(\nabla_a l) \oplus \pi^* (T^*\Q_{\mathcal{O}})$ and by Lemma~\ref{Proposition: canonical form on O} the corresponding decomposition of $g^{ab}$ is $\text{diag}(-l, h^{ab})$. Therefore $g^{ab}$ can only be invertible at points where $l\neq0$ if $h^{ab}$ is.	
\end{proof}

\begin{prop}\label{Connection invariant under flow O}
The connection $\nabla$ is invariant under the flow of $N$.
\end{prop}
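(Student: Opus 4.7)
The plan is to show $\mathcal{L}_N \nabla = 0$ by an explicit computation of the Christoffel symbols of $\nabla$ in the global adapted coordinates $(t,x^i)$ on $\mathcal{O}\simeq\mathbb{R}\times\Q_\mathcal{O}$ provided by Theorem~\ref{TheoFibreBundleInside BIS}, with $t := l/2$ so that $N=\partial_t$. The relation $\nabla_c N^b = 0$ from Lemma~\ref{Lemma: sigma scale => identities} immediately forces $\Gamma^a_{bt}=\Gamma^a_{tb}=0$, so only $\Gamma^t_{tt}$, $\Gamma^t_{ij}$ and $\Gamma^i_{jk}$ need attention, and each must be shown to be independent of $t$.

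I would first work on the open dense subset $\{l\neq 0\}\subset\mathcal{O}$ — open by continuity, dense because $N\cdot l = 2$ prevents $l$ from being locally constant — where $g^{ab}$ is invertible by Proposition~\ref{Proposition: invertible metric on Q_O} and the Levi-Civita connection $\tilde\nabla$ of Eq.~\eqref{RelationScaleSigmaLeviCivita} is defined. The canonical form of Proposition~\ref{Proposition: canonical form on O}, together with $\mathcal{L}_N\tilde h^{ab}=0$, makes $g$ block-diagonal in these coordinates: $g^{tt}=-2t$, $g^{ti}=0$, and $g^{ij}=\tilde h^{ij}(x)$ manifestly $t$-independent. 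Inverting blockwise gives $g_{tt}=-\tfrac{1}{2t}$, $g_{ti}=0$, $g_{ij}=\tilde h_{ij}(x)$, and a routine Levi-Civita calculation yields $\tilde\Gamma^t_{tt}=-\tfrac{1}{2t}$ and $\tilde\Gamma^i_{jk}=\Gamma^{(\tilde h)i}_{jk}(x)$ as the only non-vanishing Christoffels.

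Substituting into $\Gamma^b_{ca}=\tilde\Gamma^b_{ca}-N^b g_{ca}$ (from Eq.~\eqref{RelationScaleSigmaLeviCivita}), the singular term $\tilde\Gamma^t_{tt}$ is cancelled exactly by $-g_{tt}=\tfrac{1}{2t}$, yielding $\Gamma^t_{tt}=0$, $\Gamma^t_{ij}=-\tilde h_{ij}(x)$ and $\Gamma^i_{jk}=\Gamma^{(\tilde h)i}_{jk}(x)$ — all independent of $t$. Hence $\partial_t\Gamma^c_{ab}=0$, i.e.\ $\mathcal{L}_N\nabla=0$, on $\{l\neq 0\}$.

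The main delicacy is extending this conclusion across the degenerate locus $\{l=0\}$, where $g^{ab}$ fails to be invertible and the intermediate Levi-Civita pieces are singular. However, the final $\Gamma^c_{ab}$ are manifestly smooth and $t$-independent, and since the projective connection $\nabla$ is smooth throughout $\mathcal{O}$ while $\{l\neq 0\}$ is open and dense, the smooth tensor $\mathcal{L}_N\nabla$ vanishes identically on $\mathcal{O}$ by continuity.
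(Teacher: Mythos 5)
Your proof is correct, and it takes a genuinely different route from the paper. You argue entirely downstairs, in the global trivialisation of Theorem~\ref{TheoFibreBundleInside BIS}: using the canonical form of Proposition~\ref{Proposition: canonical form on O} and the relation \eqref{RelationScaleSigmaLeviCivita} between $\nabla$ and the Levi-Civita connection of $g$ on $\{l\neq 0\}$, you compute the Christoffel symbols of $\nabla$ explicitly, observe that the singular piece $-\tfrac{1}{2t}$ cancels against $-N^tg_{tt}$, and read off $t$-independence, finishing with the same density/continuity argument across $\{l=0\}$ that the paper also needs. (Minor remark: $\Gamma^t_{tt}=0$ is already forced by $\nabla_cN^a=0$, since it is a component of $\Gamma^a_{ct}$, so that cancellation is a consistency check rather than a needed step.) The paper instead works upstairs on the frame bundle $P^1(\mathcal{O})$: since $N$ is parallel its natural lift is horizontal, so $\mathcal{L}_{\tilde N}\alpha$ reduces to the curvature contracted with $N$, and the key computation is that $N^aR_{ab}{}^{c}{}_{d}=0$, proved from the symmetry $R^{abcd}=-R^{abdc}$ (a consequence of $\nabla_d\nabla_cg^{ab}=0$) together with $N^a\propto g^{ab}\nabla_b\log\lvert l\rvert$ and $R_{ab}{}^{c}{}_{d}N^d=0$ where $l\neq0$, again extended by density. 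What the paper's route buys is precisely this curvature identity as a byproduct: it immediately gives Lemma~\ref{ContractionNandWeyl} ($n^a$ annihilates the projective Weyl tensor), which is reused later (e.g.\ in Proposition~\ref{Proposition: induced projective structure on Q} and in the vanishing of $n^a\Omega_{ab}{}^{A}{}_{B}$), whereas your computation would require an extra step (computing the curvature from your Christoffels) to recover it. What your route buys is elementariness and transparency: it avoids the principal-bundle formalism, and your block formulas for $g_{ab}$ and the Christoffels essentially anticipate the expressions \eqref{Levi-civita in the scale sigma}--\eqref{Riemann in the scale sigma} that the paper only derives later in the proof of Proposition~\ref{Proposition: einstein eq sur h}.
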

\begin{proof}
First we consider instead the principal connection $\alpha$ determined by $\nabla$ on the frame bundle $P^1(\mathcal{O})$. 
The $1$-parameter group of diffeomorphisms generated by $N$ induces a $1$-parameter group of diffeomorphisms on $P^1(\mathcal{O})$. Since $N$ is parallel, the infinitesimal generator $\tilde{N}$ of the induced $1$-parameter group of diffeomorphisms\footnote{This is also referred to as the complete or natural lift of $N$, see for instance~\cite{Cordero:1983aa}.} is precisely the \emph{horizontal lift} of $N$ with respect to the principal connection $\alpha$. This can be seen, for instance, by a swift computation in a canonical coordinate system of $P^1(\mathcal{O})$.
Since the Lie derivative satisfies the Leibniz rule we have:
\[ (\mathcal{L}_{\tilde{N}}\alpha)(\tilde{X})=\tilde{N}(\alpha(\tilde{X})) - \alpha([\tilde{N},\tilde{X}]). \]
If $\tilde{X}=X^*, X\in \mathfrak{gl}(n+1,\R)$ is a fundamental vector field, this vanishes entirely; indeed: $\alpha(X^*)=X$ is constant and the Lie bracket between a fundamental field and a horizontal vector field vanishes. 
Hence, $\mathcal{L}_{\tilde{N}}\alpha$ is a horizontal and equivariant $\mathfrak{gl}(n+1,\R)$ valued 1-form.
In fact, restricting to the case where $\tilde{X}$ is horizontal ($\alpha(\tilde{X})=0$), we see that it reduces to
\[(\mathcal{L}_{\tilde{N}}\alpha)(\tilde{X})=-\alpha([\tilde{N},\tilde{X}])=\Omega(\tilde{N},\tilde{X}), \]
where $\Omega$ is the curvature $2$-form.
It follows that we should compute: $N^aR_{ab\phantom{c}d}^{\phantom{ab}c}.$ 

Using Lemma~\ref{Lemma: sigma scale => identities}, we see that although $\nabla$ is not the Levi-Civita connection of $g$, it does satisfy: $\nabla_d \nabla_c g^{ab}=0,$ which leads to the following symmetry of the curvature tensor:
\[ R^{abcd}=-R^{abdc},\]
where all indices are raised with $g^{ab}$. This implies that $R^{abcd}$ has all the symmetries of the curvature tensor of the Levi-Civita connection of a metric. In particular, at points where $l\neq 0$,
\[N^aR_{a}^{\phantom{a}bcd}=-\frac{1}{2}l^{-1}\nabla_a l R^{abcd}=\frac{1}{2}l^{-1}\nabla_al R^{cdba}=-R^{cdb}_{\phantom{cdb}a}N^a. \]
However, since $N^a$ is parallel, we must have: $R_{ab\phantom{c}d}^{\phantom{ab}c}N^d=0$, thus at points where $l\neq 0$ (at which $g^{ab}$ is non-degenerate):
\[ N^aR_{ab\phantom{c}d}^{\phantom{ab}c}=0,\]
but these points are dense in $\mathcal{O}$, so the above identity extends to $\mathcal{O}$ by continuity.
Consequently:
\[ \mathcal{L}_{\tilde{N}}\alpha=0.\]
\end{proof}

Before we state the immediate corollary, we first record a consequence of the final steps of our computation:
\begin{lemm}\label{ContractionNandWeyl}
At every point of $M$:
\[n^a\Weyl{a}{b}{c}{d}=0\]
\end{lemm}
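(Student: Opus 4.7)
The plan is to observe that this statement is essentially already contained in the computation carried out in the proof of Proposition~\ref{Connection invariant under flow O}, once combined with the vanishing of the projective Schouten tensor on $\mathcal{O}$ established in Proposition~\ref{Proposition: einstein eq sur g}.

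First I would restrict to the open orbit $\mathcal{O}$ and work in the scale $\sigma$. By Lemma~\ref{Lemma: sigma scale => identities}, this is a special scale so $\beta_{ab}=0$. Moreover, the first part of the proof of Proposition~\ref{Proposition: einstein eq sur g} shows that $\zeta^{ac}P_{bc}=0$ on $\mathcal{O}$, and since $\zeta^{ab}$ is non-degenerate on the dense subset $\{l\neq 0\}$ (Proposition~\ref{Proposition: invertible metric on Q_O}), this gives $P_{ab}=0$ everywhere on $\mathcal{O}$. From the curvature decomposition
\[ R_{ab\phantom{c}d}^{\phantom{ab}c}=W_{ab\phantom{c}d}^{\phantom{ab}c}+2\delta^c_{[a}P_{b]d}+\beta_{ab}\delta^c_d, \]
it follows that $R_{ab\phantom{c}d}^{\phantom{ab}c}=W_{ab\phantom{c}d}^{\phantom{ab}c}$ throughout $\mathcal{O}$ in the scale $\sigma$.

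Next I would recall that the concluding computation of the proof of Proposition~\ref{Connection invariant under flow O} established $N^a R_{ab\phantom{c}d}^{\phantom{ab}c}=0$ on $\mathcal{O}$ by combining the Levi-Civita-like symmetries of $R^{abcd}$ (consequence of $\nabla_d\nabla_c g^{ab}=0$) with the parallelness of $N^a$. Substituting the identification $R=W$ of the previous step yields $N^a W_{ab\phantom{c}d}^{\phantom{ab}c}=0$ on $\mathcal{O}$. Since $N^a=\sigma n^a$ with $\sigma$ nowhere vanishing on $\mathcal{O}$, this immediately gives $n^a W_{ab\phantom{c}d}^{\phantom{ab}c}=0$ on $\mathcal{O}$.

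Finally I would extend by density. The projective Weyl tensor is a projective invariant, hence $W_{ab\phantom{c}d}^{\phantom{ab}c}$ is a smooth tensor field on $\widetilde{M}$ independent of any scale; similarly $n^a$ is a globally defined smooth section of $T\widetilde{M}(-1)$. By Theorem~\ref{Thrm: Curved orbits decomposition} the complement of $\mathcal{O}$ in $M$ is the hypersurface $\Sigma$, so $\mathcal{O}$ is open and dense in $M$, and continuity of the scale-independent tensor $n^a W_{ab\phantom{c}d}^{\phantom{ab}c}$ propagates the identity to all of $M$. There is no real obstacle here: the only subtlety is that the intermediate identification $R=W$ is valid only in the particular scale $\sigma$ and hence only on $\mathcal{O}$, so the extension to $M$ must be carried out on the scale-independent quantity $n^a W_{ab\phantom{c}d}^{\phantom{ab}c}$ itself.
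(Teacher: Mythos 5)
Your proposal is correct and follows essentially the same route as the paper: work in the special scale $\sigma$ on $\mathcal{O}$, use the vanishing of $P_{ab}$ (hence $\beta_{ab}$) from the proof of Proposition~\ref{Proposition: einstein eq sur g} to identify the Riemann and Weyl tensors there, invoke the computation $N^aR_{ab\phantom{c}d}^{\phantom{ab}c}=0$ from the proof of Proposition~\ref{Connection invariant under flow O}, and extend the projectively invariant identity $n^a\Weyl{a}{b}{c}{d}=0$ from the dense open orbit $\mathcal{O}$ to all of $M$ by continuity. Your added remark that the density argument must be run on the scale-independent quantity rather than on the scale-dependent identification $R=W$ is a fair clarification of the same argument.
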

\begin{proof}
Both the Weyl tensor $\Weyl{a}{b}{c}{d}$ and $n^a$ are projectively invariant, hence this is a projectively invariant statement and it suffices to check in one particular scale. On $\mathcal{O}$, $\sigma$ is a scale and we have seen above that the Riemann tensor of the connection $\nabla^\sigma$ satisfies:
\[N^a\Riem{a}{b}{c}{d}=\sigma n^a\Riem{a}{b}{c}{d}=0, \]
therefore $n^a\Riem{a}{b}{c}{d}=0$ where $\sigma\neq 0$.
However, we saw in the proof of Proposition~\ref{Proposition: einstein eq sur g} that $P_{bc}=0$ in this \emph{special} scale, i.e. $\beta_{ab}=0$ too. Hence: $\Riem{a}{b}{c}{d}=\Weyl{a}{b}{c}{d}$ and thus the result holds on $\mathcal{O}$, since $\mathcal{O}$ is dense in $M$ it extends to $M$ by continuity.
\end{proof}

The immediate corollary of Proposition~\ref{Connection invariant under flow O} is that :
\begin{coro}\label{TheoFactorisationNablaSigma}\label{Proposition: connection on Q}
$\nabla$ induces a torsion-free connection $\bar{\nabla}$ on the quotient $\Q_{\mathcal{O}}$.
\end{coro}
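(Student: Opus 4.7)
The plan is to exploit the principal $\R$-bundle structure $\pi : \mathcal{O} \to \Q_{\mathcal{O}}$ from Theorem~\ref{TheoFibreBundleInside BIS} together with the flow-invariance of $\nabla$ established in Proposition~\ref{Connection invariant under flow O}: the idea is to define $\bar{\nabla}$ by horizontally lifting vector fields from $\Q_{\mathcal{O}}$ to $\mathcal{O}$, applying $\nabla$ there, and pushing the resulting vector field back down. The canonical principal connection $\omega_a = \tfrac{1}{2}\nabla_a l$ supplies the required horizontal distribution.

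The first step is to analyse the interaction of horizontal lifts with the flow of $N$. For $X \in \Gamma(T\Q_{\mathcal{O}})$, let $\tilde{X}$ denote its unique horizontal lift, characterised by $\pi_*\tilde{X}=X$ and $\tilde{X}(l)=0$. I would show that $\tilde{X}$ is $N$-invariant, that is $[N,\tilde{X}]=0$. On the one hand, $\pi_*[N,\tilde{X}]=[\pi_*N,X]=0$, so the bracket is vertical. On the other hand, since $\omega=\tfrac{1}{2}dl$ is exact and $N(l)=2$ by Eq.~\eqref{sigma scale => N^a nabla_a l= 2}, one has $\mathcal{L}_N\omega = 0$; applied to $\tilde{X}$ this gives $\omega([N,\tilde{X}])=\mathcal{L}_N(\omega(\tilde{X}))-(\mathcal{L}_N\omega)(\tilde{X})=0$, so the bracket is horizontal too and therefore vanishes.

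With this in hand, I claim that $\nabla_{\tilde{X}}\tilde{Y}$ is itself $N$-invariant for any $X,Y \in \Gamma(T\Q_{\mathcal{O}})$. Indeed, Proposition~\ref{Connection invariant under flow O} is equivalent to the vanishing of the $(1,2)$-tensor
\begin{equation*}
(\mathcal{L}_N\nabla)(U,V):=\mathcal{L}_N(\nabla_U V)-\nabla_{[N,U]}V-\nabla_U[N,V];
\end{equation*}
evaluated at $U=\tilde{X}$, $V=\tilde{Y}$ with $[N,\tilde{X}]=[N,\tilde{Y}]=0$ this yields $[N,\nabla_{\tilde{X}}\tilde{Y}]=0$. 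Consequently $\nabla_{\tilde{X}}\tilde{Y}$ descends to a well-defined vector field on $\Q_{\mathcal{O}}$, and I would set $\bar{\nabla}_X Y := \pi_*(\nabla_{\tilde{X}}\tilde{Y})$.

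The connection axioms are then routine: tensoriality in $X$ uses $\widetilde{fX}=(f\circ\pi)\tilde{X}$; the Leibniz rule in $Y$ uses $\tilde{X}(f\circ\pi)=(Xf)\circ\pi$; and torsion-freeness follows from
\begin{equation*}
\bar{\nabla}_X Y - \bar{\nabla}_Y X - [X,Y] = \pi_*\bigl(\nabla_{\tilde{X}}\tilde{Y} - \nabla_{\tilde{Y}}\tilde{X} - [\tilde{X},\tilde{Y}]\bigr) = 0,
\end{equation*}
since $\nabla$ is torsion-free on $\mathcal{O}$. The substantive content is entirely encapsulated by Proposition~\ref{Connection invariant under flow O}; the only genuine check in the above is the $\R$-invariance of the canonical horizontal distribution, which is the short computation $\mathcal{L}_N\omega=0$.
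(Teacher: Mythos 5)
Your proposal is correct and follows essentially the same route as the paper: define $\bar{\nabla}_X Y = \pi_*(\nabla_{\tilde{X}}\tilde{Y})$ using horizontal lifts for the canonical connection $\omega = \tfrac{1}{2}\dd l$, invoke Proposition~\ref{Connection invariant under flow O} for fibrewise well-definedness, and get torsion-freeness from torsion-freeness of $\nabla$ and naturality of the pushforward. The only difference is that you spell out the details the paper leaves implicit, namely $\mathcal{L}_N\omega=0$, hence $[N,\tilde{X}]=0$, hence $[N,\nabla_{\tilde{X}}\tilde{Y}]=0$, which is a welcome but not essentially new elaboration.
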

\begin{proof}
Let $\bar{X},\bar{Y}$ be vector fields on $\Q_{\mathcal{O}}$, and $X,Y$ their horizontal lifts to vector fields $\mathcal{O}$ using the principal connection $\omega$ in Theorem~\ref{TheoFibreBundleInside BIS} and define:
\[\bar{\nabla}_X Y = \pi_*(\nabla_{\overline{X}}\overline{Y}) \]
By the previous proposition, this is independent of choice on the fibre and so is a well-defined vector field on $Q$. This is clearly linear in $X$ and satisfies the Leibniz rule in $Y$ and therefore is a connection.
It is torsion-free as:
\[ \bar{\nabla}_{\bar{Y}}{\bar{X}}-\bar{\nabla}_{\bar{X}}{\bar{Y}}=\pi_*(\nabla_X Y -\nabla_Y X)= \pi_*([X,Y])=[\pi_*(X),\pi_*(Y)]=[\bar{X},\bar{Y}], \]
by naturality of the pushforward.
\end{proof}

\begin{prop}\label{Proposition: einstein eq sur h}
The induced connection $\bar{\nabla}$ on $\Q_{\mathcal{O}}$ is the Levi-Civita connection of the form $h$ defined in Proposition~\ref{Proposition: invertible metric on Q_O}; furthermore the Lorentzian geometry $(Q,h_{ab})$ is Ricci-flat.
\end{prop}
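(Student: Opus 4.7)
The plan is to split the statement into two parts. First, I show that $\bar\nabla$ preserves the tensor $h$, which combined with the torsion-freeness from Corollary \ref{TheoFactorisationNablaSigma} identifies $\bar\nabla$ as the Levi-Civita connection of $h$. Second, I deduce Ricci-flatness of $h$ from the Ricci-flatness of $g^{ab}$ established in Proposition \ref{Proposition: einstein eq sur g}, by exploiting the explicit product structure of $\mathcal{O}$ provided by Theorem \ref{TheoFibreBundleInside BIS}.

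For the metric compatibility, the idea is to work in the $\sigma$-scale on $\mathcal{O}$ and use the canonical decomposition $\tilde h^{ab} = g^{ab} + l\, N^a N^b$ of Proposition \ref{Proposition: canonical form on O}. A direct differentiation, using $\nabla_c N^a = 0$ and $\nabla_c g^{ab} = -2\delta_c^{(a} N^{b)}$ from Lemma \ref{Lemma: sigma scale => identities}, gives
\[\nabla_c \tilde h^{ab} = -2\delta_c^{(a} N^{b)} + (\nabla_c l)\, N^a N^b.\]
Every term on the right-hand side carries $N$ in one of the upper indices, so the contraction $\nabla_c \tilde h^{ab}\,\tilde\alpha_a\tilde\beta_b$ vanishes whenever $\tilde\alpha=\pi^*\bar\alpha$ and $\tilde\beta=\pi^*\bar\beta$ are pulled back from 1-forms on $\Q_{\mathcal O}$ (such forms automatically satisfy $\tilde\alpha_a N^a = 0$). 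Combined with the fact that $\nabla_X\tilde\alpha$ remains horizontal for $X$ horizontal -- which itself follows from $\nabla N = 0$ -- unwinding the Leibniz rule and the definition of $\bar\nabla$ in Corollary \ref{TheoFactorisationNablaSigma} yields $(\bar\nabla_{\bar X}h)(\bar\alpha,\bar\beta)=0$, so $\bar\nabla h = 0$.

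For the Ricci-flatness, I would use the global diffeomorphism $\mathcal{O}\simeq \mathbb{R}\times \Q_{\mathcal O}$ of Theorem \ref{TheoFibreBundleInside BIS}. Since $N^a\nabla_a l = 2$, in the adapted coordinates $(l,y^i)$ one has $N = 2\partial_l$, and combined with $\tilde h^{ab}\nabla_b l = 0$ the inversion of the canonical form of $g^{ab}$ yields the explicit metric
\[g = -\frac{dl^2}{4l} + \pi^*h\]
on $\{l\neq 0\}$. On each connected region $\{\pm l > 0\}$ the reparametrisation $l = \pm s^2$ turns this into a bona fide direct product $g = \pm ds^2 + \pi^*h$ with a one-dimensional flat factor. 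The standard Ricci decomposition on such a product gives $\mathrm{Ric}(g) = \pi^*\mathrm{Ric}(h)$, so the Ricci-flatness of $g$ on $\{l\neq 0\}$ from Proposition \ref{Proposition: einstein eq sur g}, together with the fact that every point of $\Q_{\mathcal O}$ admits a lift to $\{l\neq 0\}$, forces $\mathrm{Ric}(h) = 0$ on all of $\Q_{\mathcal O}$.

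The main technical point I expect to require care is the translation between $\nabla_c\tilde h^{ab}$ on $\mathcal{O}$ and $\bar\nabla h$ on the quotient: Corollary \ref{TheoFactorisationNablaSigma} defines $\bar\nabla$ only via horizontal lifts of vector fields, so its induced action on covariant tensors must be extended consistently, and the key ingredient making derivatives of pulled-back 1-forms remain horizontal is precisely $\nabla N = 0$.
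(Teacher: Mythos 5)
Your proposal is correct and follows essentially the same route as the paper: the metric-compatibility step is the paper's own computation (the derivative of $g$, equivalently $\tilde h$, contracts to zero against pulled-back forms, with $\nabla_c N^a=0$ guaranteeing that derivatives of pulled-back forms stay horizontal), and your Ricci argument via the trivialisation $g=-\tfrac{1}{4l}\,\dd l^2+\pi^*h$ is exactly the paper's second verification, merely recast as a genuine product metric after setting $l=\pm s^2$. Only a trivial slip: $l=\pm s^2$ gives $g=\mp \dd s^2+\pi^*h$ (sign reversed from what you wrote), which does not affect the conclusion.
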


\begin{proof}
If $\bar{X}$ is a vector field on $\Q_{\mathcal{O}}$, $X$ its horizontal lift to a vector field on $\mathcal{O}$, and $\alpha, \beta$ are 1-forms on $\Q_{\mathcal{O}}$, then at every point $x\in \mathcal{O}$ we have:
\[ \begin{aligned} (\bar{\nabla}_{\bar{X}} h)(\alpha,\beta)(\pi(x))&= \nabla_{\bar{X}}g(\pi^*\alpha, \pi^*\beta)(x)\\&=-(\pi^*\alpha)(N)(\pi^*\beta)(X)(x)-(\pi^*\beta)(N)(\pi^*\alpha)(X)\\&=0. \end{aligned}\]
This shows that $\bar{\nabla}$ is the Levi-Civita connection for $h$.

Now, if $\bar{X},\bar{Y}$ and $\bar{Z}$ are vector fields on $\Q_\mathcal{O}$ and $X,Y,Z$ their horizontal lifts to $\mathcal{O}$ then, noting that $\nabla_a l[X,Y]^a=0$, and using that $N$ is parallel for $\nabla$, we have:
\[\bar{R}(\bar{X},\bar{Y})\bar{Z}= \pi_*(R(X,Y)Z). \]

If $(\bar{E}_i)$ is an arbitrary local frame of $T\Q_{\mathcal{O}}$ then lifting them horizontally to fields on $\mathcal{O}$, we obtain a local frame $(E_i,N)$ of $TM$, with dual frame: $(\omega^i, \frac{1}{2}\dd l)$. Now:
\[ \textrm{Ric}(Y,Z)= \sum_i \omega^i(R(E_i, Y)Z) + \frac{1}{2}\dd l(R(N,Y)Z)= \sum_i \omega^i(R(E_i, Y)Z).  \]
By definition, $\omega^i(E_j)=\delta^i_j$, $\omega^i(N)=0$ therefore: $\mathcal{L}_N \omega^i(E_j) = -\omega^i([N,E_j])=0$. Thus, the $\omega^i$ are the pullbacks of a local frame $\bar{\omega}^i$ of $\Q_\mathcal{O}$ and it follows that:
\[ 0=\textrm{Ric}(Y,Z)=\overline{\textrm{Ric}}(\tilde{Y},\tilde{Z}).\]

We can also check this in terms of the global trivialisation given in Theorem~\ref{TheoFibreBundleInside BIS}:

	\begin{equation}
	\begin{array}{ccc}
	T\mathcal{O} & \to & \text{Span}(N) \oplus T\Q_\mathcal{O}\\
	v^a & \mapsto &( \frac{1}{2}(v^b dl_b) N^a , v^b \nabla_bx^{\mu})
	\end{array}
	\end{equation}
	the decomposition of the tangent bundle given by $l$. At any point where  $\lambda \neq 0$ the inverse of $g^{ab}$ is, in this decomposition,
	\begin{equation}
	g_{ab} = \begin{pmatrix}-\frac{1}{l} & 0 \\ 0 &  h_{\mu\nu}\end{pmatrix}.
	\end{equation}
	The local connection form of the Levi-Civita connection in coordinates $(x^0=\frac{1}{2} l, x^\mu)$ adapted the global trivialisation is then given by
	\begin{equation}\label{Levi-civita in the scale sigma}
	(\omega^i_{\, j})_c= \nabla_c x^0\begin{pmatrix}
	-\frac{1}{2l} & 0 \\ 0 & 0
	\end{pmatrix} +  \nabla_c x^\rho \begin{pmatrix}
	0 & 0 \\ 0 & \gamma^{\mu}_{\rho\nu}
	\end{pmatrix}
	\end{equation}
	where $\gamma^{\mu}_{\rho\nu}$ are the Christoffel symbols of the Levi-Civita connection of $h$. The local curvature form is
	\begin{equation}\label{Riemann in the scale sigma}
	(F^i{}_{j})_{cd}= \nabla_cx^{\rho}\wedge\nabla_d x^{\sigma} \begin{pmatrix}
	0 & 0 \\ 0 & R^{\mu}{}_{\nu\rho\sigma}
	\end{pmatrix}
	\end{equation}
	where $R^{\mu}{}_{\nu\rho\sigma}$ is the Riemann tensor of $h$. Since by Proposition~\ref{Proposition: einstein eq sur g} $g_{ab}$ is Ricci-flat it immediately follows that $h_{ab}$ is also.
\end{proof}

\section{Geometry of $\Q$}\label{section: Geometry of Q}

We recall that $M := \widetilde{M}\setminus \mathcal{Z}(n)$ and assume, as in Theorem \ref{Thrm: Curved orbits decomposition: quotient}, that its quotient $\Q$ by the flow of $n^a$ can be equipped with a smooth manifold structure. It then decomposes as
\begin{equation}
\Q = \Q_\mathcal{O} \cup \mathcal{H}^- \cup \mathcal{H}^0 \cup \mathcal{H}^+ .
\end{equation}

 From the previous section, in particular Proposition \ref{Proposition: invertible metric on Q_O} and Proposition  \ref{Proposition: einstein eq sur h}, we learned that $\Q_\mathcal{O}$ is equipped with a Ricci flat Lorentzian metric. We will now show that this metric is projectively compact of order 1 with $\mathcal{H}^- \cup \mathcal{H}^0 \cup \mathcal{H}^+$ forming the projective boundary.
 
In order to prove this we will proceed in 3 steps. First, we will discuss how to identify densities and tractor bundles on $Q$ with subbundles of bundles on $M$ via pullback.
This first step can be thought as an infinitesimal version of the relations between the homogeneous models as in Figure~\ref{Figure: homogeneous space}. In particular, since $H^{AB}$ and $I_A$ are covariantly constant on the whole of $M$ they will define corresponding tractor fields $\bar{H}^{AB}$ and $\bar{I}_A$ on  $\Q$. As a result of the quotient by $I^A$ which is necessary for the identification, $\bar{H}^{AB}$ then is degenerate of kernel $\bar{I}_A$.  
Secondly, we will show that the projective tractor connection $\nabla$ of $ M$ induces a normal projective tractor connection $\bar{\nabla}$ on $\Q$ and that $\bar{\nabla}_c\bar{H}^{AB}=0$, $\bar{\nabla}_c\bar{I}_{A}=0$. This will imply by  \cite{Cap:2014ab,Flood:2018aa} that $\Q$ is equipped with a Ricci flat metric projectively compact of order 1. Finally we will show that on $\Q_\mathcal{O}$ this metric coincides with the Ricci flat metric resulting from the previous section.

\subsection{Densities and tractors of $\Q$}
We will first show how to relate densities on $M$ with densities on $\Q$. Extending this at the level of $1$-jets will yield the corresponding identification between projective tractors $\T_M$ on $M$ with projective tractors $\T_\Q$ of $\Q$.

\subsubsection{Densities on $\Q$}

In the sequel, we shall assume that the flow of $n^a \in \Gamma\left(TM(-1)\right)$ on  $M :=  \widetilde{M} \setminus \mathcal{Z}(n)$ defines a fibre bundle
\begin{equation}\label{M -> Q}
\pi :\begin{array}{ccc}
M & \longrightarrow &\Q
\end{array}
\end{equation}
with fibres given by the integral lines, \textrm{and general fibre $\R$}.

Let $\mathcal{E}_M(w)$ and $\mathcal{E}_\Q(w)$ be projective densities of $M$ and $\Q$ respectively.
\begin{prop}\label{Proposition: Isomorphism between densities}
	There is a canonical isomorphism
\begin{equation}\label{Isomorphism between densities}
\pi^*\E_{\Q}(w)\simeq  \E_M(w).
\end{equation}
\end{prop}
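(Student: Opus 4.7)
My plan is to construct the isomorphism canonically in three steps by combining the tautological identification of top-form bundles with projective densities, the short exact sequence of the fibration $\pi$, and the scale-invariance of $n^a$. First, unravelling the defining representation $A\mapsto |\det A|^{w/(k+1)}$ of $GL_k(\R)$ gives, for any $k$-dimensional base, a canonical identification $\E(k+1)\simeq |\Lambda^k T|$. Applied to $M$ and $\Q$ this yields
\[
\E_M(n+2)\simeq |\Lambda^{n+1}TM|,\qquad \E_\Q(n+1)\simeq |\Lambda^{n}T\Q|.
\]

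Second, the fibration $\pi:M\to\Q$ induces the short exact sequence $0\to V\to TM\to \pi^*T\Q\to 0$, where $V=\ker d\pi$ is the vertical line bundle, and hence a canonical isomorphism
\[
|\Lambda^{n+1}TM|\simeq |V|\otimes \pi^*|\Lambda^{n}T\Q|.
\]
The third, key ingredient is the data at hand: $n^a\in\Gamma(TM(-1))$ is scale-invariant, nowhere-vanishes on $M=\widetilde{M}\setminus\mathcal{Z}(n)$, and is vertical (its flow generates the fibres of $\pi$). Therefore $\tau\mapsto \tau\,n^a$ provides a canonical trivialisation $\E_M(1)\xrightarrow{\sim}|V|$. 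Assembling the three steps yields the canonical line-bundle isomorphism
\[
\E_M(n+2)\simeq \E_M(1)\otimes \pi^*\E_\Q(n+1),
\]
equivalently $\pi^*\E_\Q(n+1)\simeq \E_M(n+1)$.

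To conclude for arbitrary real weight, recall that density bundles are canonically defined for every $w\in\R$ through the positive transition functions $|\det A|^{w/(k+1)}$, and that the assignment $w\mapsto \E(w)$ is compatible with tensor products and fractional powers of positive sections. Raising the previous isomorphism to the power $w/(n+1)$ yields the claimed $\pi^*\E_\Q(w)\simeq \E_M(w)$ for all $w\in \R$. The main point that deserves attention is canonicity---independence of any choice of scale---but this is automatic since none of the three ingredients depend on a choice of connection in $[\nabla]$: the first is representation-theoretic, the second comes from $\pi$ alone, and the third uses only the scale-invariance of $n^a$.
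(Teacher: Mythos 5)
Your proof is correct and follows essentially the same route as the paper: both hinge on the nowhere-vanishing, projectively invariant, vertical weight~$-1$ field $n^a$ to identify top-degree densities on $M$ with pullbacks of top-degree densities on $\Q$, and then pass to arbitrary weights by taking roots/powers of positive line bundles. The only difference is cosmetic: the paper contracts the canonical squared volume section $\varepsilon^2$ with $n^an^b$ on the cotangent side, whereas you take determinants in the exact sequence $0\to V\to TM\to\pi^*T\Q\to 0$ and trivialise $|V|$ by $n^a$ --- dual formulations of the same step (your $\E_M(1)\simeq|V|$ should strictly read $\E_M(1)\simeq V$ followed by the canonical $|V|\simeq|\E_M(1)|\simeq\E_M(1)$, but this is immaterial).
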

\begin{proof}
By definition of projective densities (see Section~\ref{Projective Tractor Calculus}), the bundle $\mathcal{E}_{M}(w)$ is canonically isomorphic to $\big(\textnormal{Vol}(M)\big)^{-\frac{w}{n+2}}$ where $\textnormal{Vol}(M) = \big(\Lambda^{n+1}T^*M\otimes\Lambda^{n+1} T^*M\big)^{\frac{1}{2}}$ is the bundle of volume densities of $M$. 

Therefore, there always exists a canonical nowhere vanishing section $\varepsilon^2_{a_1\dots a_{n+1}, b_1 \dots b_{n+1}}$ of $  [(\Lambda^{n+1}T^*M)\otimes(\Lambda^{n+1} T^*M)](2(n+2))$ realising the canonical isomorphism
\begin{equation}
\varepsilon^2_{a_1\dots a_{n+1}, b_1 \dots b_{n+1}} :
\begin{array}{ccc}
\E_M(-2(n+2)) &\simeq& \Lambda^{n+1}T^*M \otimes \Lambda^{n+1}T^*M
\end{array}
\end{equation}
We will note $TM/n$ the fibre bundle over $M=\widetilde{M}\setminus \mathcal{Z}(n)$ obtained by taking the point-wise quotient of $TM$ by the span of $s n^a$ where $s$ is any \enquote{true scale}, i.e. nowhere vanishing density of weight $1$; the quotient is independent of the choice of $s$. By construction, the annihilator $\mathcal{N}^*$ of $n^a$ in $T^*M$ is identified with $(TM/n)^*$ as well as with the pull-back bundle $\pi^*(T^*Q)$. It follows that:

\begin{equation}\label{proof: nu volume definition}
\nu_{a_1\dots a_n, b_1\dots b_{n}}:= \varepsilon^2_{aa_1\dots a_{n}, bb_1 \dots b_{n}}n^an^b,
\end{equation}
is identified with a nowhere vanishing section of $\pi^*\big(\Lambda^n T^*Q \otimes \Lambda^n T^*Q \big)\otimes \E_M\big(2(n+1)\big)$ and thus realises an isomorphism
\begin{equation}\label{Proof: intermediate density isomorphism}
\mathcal{E}_M\big(-2(n +1)\big) \simeq \pi^*\big(\Lambda^n T^*Q \otimes \Lambda^n T^*Q \big) \simeq \pi^*\mathcal{E}_{\Q}(-2(n+1)).
\end{equation}
Taking roots yields the desired isomorphism for arbitrary weights $w$.\end{proof}

This identification enables us to construct a natural differential operator on projective densities of $M$. Indeed, the action of the Lie derivative on $\pi^*\big(\Lambda^n T^*Q \otimes \Lambda^n T^*Q \big)$ along $n^a$ induces a differential operator $D_n$ on $\mathcal{E}_{M}(w)$ by closing the following commutative diagram:
\begin{figure}[h!!]
\centering
\begin{tikzcd}
\pi^*\left( \Lambda^n T^*Q \otimes \Lambda^n T^*Q \right)^{-\frac{w}{2(n+1)}} \arrow[d,"\simeqd"]\arrow[r,"\mathcal{L}_n"] &   \pi^*\left( \Lambda^n T^*Q \otimes \Lambda^n T^*Q \right)^{-\frac{w}{2(n+1)}}(-1)\arrow[d,"\simeqd"]\\
 \mathcal{E}_{M}(w) \arrow[r, dashed, "D_n"]&  \mathcal{E}_{M}(w-1).
\end{tikzcd}
\caption{Defining diagram of $D_n$ \label{Commutative diagram for Lie derivatives}}
\end{figure}

\begin{prop}\label{Proposition: Linear vertical differential operator on densities}
	The maps appearing in the commutative diagram in Figure~\ref{Commutative diagram for Lie derivatives} are well defined linear differential operators of order $1$. In a given projective scale, we have
	\begin{equation}\label{Vertical derivative, definition}
	\begin{array}{cccc}
	D_n : &\mathcal{E}_{M}(w) &\longrightarrow & \mathcal{E}_{M}(w-1)\\
	&\tau &\longmapsto & (\nabla_{n} +w\rho)\tau
	\end{array}
	\end{equation}
	where $\rho := -\tfrac{1}{n+1} \nabla_a n^a=Y_AI^A.$
\end{prop}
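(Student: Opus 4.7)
My plan is to work in a fixed projective scale, verify the explicit formula, and then reduce the proof to two independent checks: scale-invariance of the proposed expression and agreement with the operator defined by the commutative diagram. I would first dispatch the algebraic identity $\rho = -\tfrac{1}{n+1}\nabla_a n^a = Y_A I^A$: the second equality is immediate from the decomposition $I^A = n^a W_a^A + \rho X^A$ together with $Y_A X^A = 1$ and $Y_A W_a^A = 0$; the first follows by tracing the relation $\nabla_b n^a = -\delta^a_b \rho$ already derived in \eqref{IParallel} from $\nabla_c I^A = 0$. This simultaneously shows that $\rho$ is a well-defined scale-dependent weight-$(-1)$ density.

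Next I would verify scale-invariance of the candidate $D_n \lambda := (\nabla_n + w\rho)\lambda$. Under $\hat{\nabla} = \nabla + \Upsilon$, a weight-$w$ density satisfies $\hat{\nabla}_n \lambda - \nabla_n \lambda = w(\Upsilon_a n^a)\lambda$, while the transformation law $\hat{Y}_A = Y_A - \Upsilon_a Z^a_A$ gives $\hat{\rho} - \rho = -\Upsilon_a n^a$. The two corrections cancel, so $(\nabla_n + w\rho)\lambda$ defines a linear first-order operator $\mathcal{E}_M(w) \to \mathcal{E}_M(w-1)$ independent of the choice of scale.

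It remains to show that this operator agrees with the one produced by Figure~\ref{Commutative diagram for Lie derivatives}. Working in an arbitrary special scale, $\nu$ becomes an honest tensor field: since $[n,n] = 0$ and $\mathcal{L}_n(\omega \otimes \omega) = 2(\nabla_a n^a)(\omega \otimes \omega)$ for the covariantly constant volume form $\omega$ squaring to $\varepsilon^2$, one obtains the key identity
\[
\mathcal{L}_n \nu \;=\; 2(\nabla_a n^a)\,\nu \;=\; -2(n+1)\rho\,\nu.
\]
The isomorphism~\eqref{Isomorphism between densities} represents a weight-$w$ density $\lambda$ as the honest section $\lambda\,\nu^{-w/(2(n+1))}$; applying the Leibniz rule and substituting the identity above gives
\[
\mathcal{L}_n\bigl(\lambda\,\nu^{-w/(2(n+1))}\bigr) \;=\; \bigl(\nabla_n \lambda + w\rho\,\lambda\bigr)\,\nu^{-w/(2(n+1))},
\]
since the factor $-w/(2(n+1)) \cdot (-2(n+1)\rho) = w\rho$ is exactly what is needed. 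Transporting back along the right-hand vertical isomorphism of the diagram yields $(\nabla_n + w\rho)\lambda \in \mathcal{E}(w-1)$, as claimed.

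The main obstacle is the weight bookkeeping: one must consistently track the fractional power $-w/(2(n+1))$ inherited from taking roots of the isomorphism~\eqref{Proof: intermediate density isomorphism} and confirm that the extra $\mathcal{E}(-1)$ factor on the right of the diagram is correctly absorbed when passing back to densities. Once these identifications are carefully arranged, everything else reduces to Leibniz and the trace identity of the first paragraph.
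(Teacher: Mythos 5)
Your proof is correct and follows essentially the same route as the paper: both transport the Lie derivative through the $\nu$-isomorphism in the scale determined by the trivialising density and reduce everything to $\nabla_b n^a=-\rho\,\delta^a_b$ together with the parallelism of $\varepsilon^2$. The differences are only organisational — where the paper establishes well-definedness by noting that $\mathcal{L}_{sn}\alpha = sn\lrcorner \dd\alpha$ is tensorial in the density $s$ and works out only the weight $w=-2(n+1)$ case explicitly, you instead verify projective invariance of $(\nabla_n+w\rho)\lambda$ directly and treat arbitrary $w$ at once via the Leibniz rule applied to $\nu^{-w/(2(n+1))}$, both of which are sound.
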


\begin{proof}
Let $s$ be an arbitrary non-vanishing density of weight $1$ on $M$, and $X^a=sn^a$. Since $\pi^*(\Lambda^nT^*Q)$ can be identified with $n$-forms on $M$ that vanish whenever at least one of its arguments is a vertical vector, we can apply the Lie derivative in the usual sense. Let $\nabla$ be the scale determined by $s$ in the projective class then for any $\alpha \in \pi^*(\Lambda^nT^*Q)$:
\[ \mathcal{L}_X\alpha= X \lrcorner \dd\alpha = \nabla_X \alpha - n s\rho \alpha= s(n^a\nabla_a\alpha -n\rho \alpha), \]
where we recall $\nabla_b n^a = - \rho \delta_b^a$.
Since this is proportional to the scale $s$, we see that this yields a well-defined linear operator $\mathcal{L}_{n} : \pi^*(\Lambda^n T^*Q) \to  \pi^*(\Lambda^n T^*Q)(-1)$.
According to the commutative diagram in Figure~\ref{Commutative diagram for Lie derivatives}, to define $D_n$ on densities of weight $-2(n+1)$, we must set:
\[\begin{aligned} D_n \tau = \nu^{-1}\mathcal{L}_n (\nu \tau)&= \nu^{-1} \left(\nabla_n (\nu \tau) - 2n\rho \nu\tau\right) \\
&=\nu^{-1} \left(\nu \nabla_n \tau -2\rho \nu \tau - 2n\rho \nu \tau \right)\\&= \nabla_n \tau - 2(n+1)\rho \tau.
\end{aligned}\]
where $\nu$ is a short-hand notation for the section \eqref{proof: nu volume definition}.
Eq. \eqref{Vertical derivative, definition} follows for densities of arbitrary weight $w$.  
\end{proof}

Proposition \ref{Proposition: Isomorphism between densities} allows us to identify projective densities $\mathcal{E}_{M}(w)$ on $M$ with pullbacks of densities on $\Q$. Since sections $\tau$ of $\pi^*\mathcal{E}_{\Q}(w)$ which are the pullback $\tau = \pi^* \bar{\tau}$ of sections of $\mathcal{E}_{\Q}(w)$ are precisely those with vanishing Lie derivative in the direction $n^a$ we obtain, from the commutative diagram~\ref{Commutative diagram for Lie derivatives}, the following characterisation of densities on $Q$:

\begin{prop}\label{Proposition: equivalence of densities}
		Densities $\bar{\tau}$ of weight $w$ on $\Q$ are canonically identified with densities (of weight $w$) $\tau$ on $M$ satisfying $D_n\tau=0$:
\begin{align}\label{Equivalence of densities on Q and M}
D_n \tau &=0,\quad\tau \in \Gamma(\mathcal{E}_{M}(w)) & \Leftrightarrow& & \tau &= \pi^* \bar{\tau}, \qquad \bar{\tau} \in \Gamma(\mathcal{E}_{\Q}(w)).
\end{align}
\end{prop}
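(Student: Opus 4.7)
I would translate both conditions via the identification $\nu : \mathcal{E}_M(w)\simeq\pi^*\mathcal{E}_\Q(w)$ of Proposition~\ref{Proposition: Isomorphism between densities} and the commutative diagram defining $D_n$ (Figure~\ref{Commutative diagram for Lie derivatives}), so that the equivalence reduces to the standard fact that $\R$-invariant smooth sections of a pullback bundle are precisely those that descend from the base.

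For the direction ``$\Leftarrow$", assume $\lambda = \pi^*\tilde\lambda$ with $\tilde\lambda \in \Gamma(\mathcal{E}_\Q(w))$. Under $\nu$, the associated object $\mu$ on $M$ is itself a pullback, $\mu = \pi^*\tilde\mu$, of a section of $(\Lambda^n T^*\Q \otimes \Lambda^n T^*\Q)^{-w/(2(n+1))}$ on $\Q$. Choosing any nowhere vanishing weight-one density $\nu_0$ and setting $N = \nu_0 n$, the field $N$ is an ordinary (unweighted) vector field tangent to the fibres of $\pi$, and any pullback section is annihilated by the Lie derivative along a vertical vector field, giving $\mathcal{L}_N(\pi^*\tilde\mu) = 0$. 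Unwinding the construction of $\mathcal{L}_n$ in the proof of Proposition~\ref{Proposition: Linear vertical differential operator on densities} and the definition of $D_n$ then yields $D_n\lambda = 0$.

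For the direction ``$\Rightarrow$", assume $D_n\lambda = 0$. Under $\nu$ and the diagram, this is equivalent to $\mathcal{L}_n\mu = 0$, i.e.\ to invariance of $\mu$ under the flow of $N = \nu_0 n$ for any fixed nowhere vanishing $\nu_0$. By the assumption of Theorem~\ref{Thrm: Curved orbits decomposition: quotient}, $\pi : M \to \Q$ is a fibre bundle with connected fibre $\R$ on which the flow of $N$ acts transitively. In any local trivialisation $\pi^{-1}(U) \simeq U \times \R$ the smooth $\R$-invariant section $\mu$ is independent of the $\R$-coordinate and therefore defines a smooth section on $U$; compatibility across overlaps is automatic, so $\mu = \pi^*\tilde\mu$ for a unique smooth $\tilde\mu$ on $\Q$. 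Transporting back through $\nu$ produces the required $\tilde\lambda \in \Gamma(\mathcal{E}_\Q(w))$ with $\lambda = \pi^*\tilde\lambda$, uniquely determined by the surjectivity of $\pi$.

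The only non-formal step is the smooth descent in the second direction, which I expect to be the main obstacle. It rests on the principal bundle structure of $\pi$ furnished on $\mathcal{O}$ by Theorem~\ref{TheoFibreBundleInside BIS} together with the global fibration assumption of Theorem~\ref{Thrm: Curved orbits decomposition: quotient}; once this descent is settled in local trivialisations as above, the remainder is bookkeeping. In particular, the isomorphism $\nu$ of Proposition~\ref{Proposition: Isomorphism between densities} was built solely from $n^a$ and $\varepsilon^2$, both projectively invariant, and therefore intertwines the notions of ``pullback'' on the two sides of that isomorphism tautologically.
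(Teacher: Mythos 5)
Your proposal is correct and follows essentially the same route as the paper, which obtains the proposition directly from the observation that sections of $\pi^*\mathcal{E}_{\Q}(w)$ arising as pullbacks are exactly those invariant along the (connected $\R$) fibres, transported through the isomorphism of Proposition~\ref{Proposition: Isomorphism between densities} and the defining diagram of $D_n$. Your elaboration of the descent step in local trivialisations is just a spelled-out version of the same argument.
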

In the righthand side of the above equivalence, and throughout the following sections of this article, we abuse notation and use the isomorphism \eqref{Isomorphism between densities} to identify sections of $\mathcal{E}_M(w)$ and $\pi_*\left(\E_{\Q}(w)\right)$, i.e. we will think of $\pi^*\bar{\tau}$ as a section of $\mathcal{E}_M(w)$.

\begin{rema}\label{ScalesWithRho=0Exist}
	This shows that when $\Q$ can be equipped with a smooth manifold structure, there is a nowhere vanishing density $s$ of weight $1$ on $M$ such that $\rho=Y_AI^A=0$ in the scale $\nabla^s$. Indeed, let $\tilde{s}$ be a nowhere vanishing scale on $\Q$; we have shown that this corresponds to a nowhere vanishing density $s  = \pi^* \tilde{s}$ such that $I^AD_As =0$, writing this last equation in the scale determined by $s$ shows that $s\rho =0 \Rightarrow \rho=0.$ 
\end{rema}
The above remark motivates the following definition:

\begin{defi}\label{AdaptedScales}
A nowhere vanishing $1$-density on $M$ that satisfies $D_n \nu =0$ will be called an \textbf{adapted scale} on $M$.
\end{defi}

These scales are useful as they guarantee the most compatibility between the structures on $M$ and $\Q$; we record below of their some fundamental properties.
\begin{lemm}\label{Lemma: PropertiesAdaptedScales}
Let $s$ be an adapted scale on $M$, and $\nabla$ the scale it determines in the projective class. Then:
\begin{equation}
\rho = Y_A I^A =0, \quad P_{ab}n^b = 0,\quad \nabla_a n^b =0.
\end{equation}
\end{lemm}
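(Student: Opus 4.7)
The plan is to unpack the definition of adapted scale using Proposition \ref{Proposition: Linear vertical differential operator on densities} to immediately get $\rho=0$ in the scale determined by $s$, and then read off the other two identities directly from equation \eqref{IParallel}. No deep argument is required; the lemma is essentially a restatement of Remark \ref{ScalesWithRho=0Exist} together with the parallelism condition on $I^A$.

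More precisely, first I would apply Proposition \ref{Proposition: Linear vertical differential operator on densities} to the weight-$1$ density $s$: in any projective scale one has $D_n s = \nabla_n s + \rho s$. Now specialise to the scale $\nabla$ determined by $s$ itself, so that $\nabla s = 0$ and in particular $\nabla_n s = 0$. The hypothesis $D_n s = 0$ then forces $\rho s = 0$, and since $s$ is nowhere vanishing we conclude $\rho = Y_A I^A = 0$ in this scale. This is the content of Remark \ref{ScalesWithRho=0Exist}; I am just repackaging it.

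Once $\rho = 0$ is in hand, the remaining two statements are immediate consequences of the component form \eqref{IParallel} of $\nabla_c I^A = 0$. The first equation in \eqref{IParallel} reads $\nabla_c n^a = -\delta_c^a \rho$, which with $\rho = 0$ gives $\nabla_a n^b = 0$. The second reads $\nabla_c \rho = P_{ca} n^a$; since $\rho$ vanishes identically in this scale, its derivative vanishes too and we conclude $P_{ca} n^a = 0$.

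There is essentially no obstacle. The only subtlety is conceptual rather than technical: one must be careful that the single condition $D_n s = 0$, which is a first-order condition on a density, really suffices to simultaneously kill $\rho$ (a curvature-type quantity of the density bundle) and make the parallel tractor $I^A$ degenerate to a genuinely parallel vector field $n^a$ along with the vanishing of $P_{ab} n^b$. This works precisely because $I^A$ is globally parallel on $M$, so all the components of $\nabla_c I^A = 0$ are at our disposal simultaneously, and fixing the single scalar $\rho$ to vanish collapses them both.
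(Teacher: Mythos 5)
Your proof is correct and follows exactly the route the paper intends: the paper leaves this lemma without a written proof, relying on Remark~\ref{ScalesWithRho=0Exist} (which is your $D_n s = \nabla_n s + \rho s = \rho s = 0 \Rightarrow \rho = 0$ argument) and on reading $\nabla_c n^a = 0$ and $P_{ca}n^a = \nabla_c\rho = 0$ off the component form \eqref{IParallel} of $\nabla_c I^A = 0$ in the scale determined by $s$. No gaps.
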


Using their characterisation and Eq. \eqref{CovariantDerivativesDirectionn}, we can immediately identify some important adapted scales on $\mathcal{O}$ and $\Sigma\setminus \{\lambda= 0\}$:
\begin{coro}\label{Corollary: Densities induced on Q}\mbox{}
	\begin{itemize}
		\item The density $\sigma := H^{AB}X_A I_B$ on $M$ induces a weight 1 density $\bar{\sigma}$ on
\begin{equation*}
\Q = \Q_\mathcal{O} \cup \underbrace{\mathcal{H}^- \cup \mathcal{H}^0 \cup \mathcal{H}^+}_{\mathcal{H}}
\end{equation*}
such that $\bar{\sigma}(x) =0 \;\Leftrightarrow\; x \in \mathcal{H}=\Sigma/\R$; in particular it is a boundary defining function for $\mathcal{H}$.

\item The density $\lambda := H^{AB}X_A X_B$ on $M$ induces a weight 2 density $\bar{\lambda}$, on
\begin{equation*}
\mathcal{H} = \mathcal{H}^- \cup \mathcal{H}^0 \cup \mathcal{H}^+
\end{equation*}
such that $\bar{\lambda}(x) =0 \;\Leftrightarrow\; x \in \mathcal{H}^0$.
	\end{itemize}
\end{coro}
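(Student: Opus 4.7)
My plan is to apply Proposition~\ref{Proposition: equivalence of densities}, which characterises weight-$w$ densities on $\Q$ as exactly the weight-$w$ densities on $M$ annihilated by the vertical operator $D_n$. The first step is therefore to compute $D_n\sigma$ and $D_n\lambda$ using the explicit formula \eqref{Vertical derivative, definition} together with Lemma~\ref{CovariantDerivativesDirectionn}. For the weight $1$ density $\sigma$ the two terms in $D_n\sigma = n^a\nabla_a\sigma + \rho\sigma$ cancel exactly, so $\sigma=\pi^*\bar\sigma$ for a well-defined $\bar\sigma\in\Gamma(\E_\Q(1))$. For the weight $2$ density $\lambda$ the same computation yields $D_n\lambda = 2\sigma$, which does not vanish on all of $M$; so $\lambda$ descends only after restriction to the flow-invariant hypersurface $\Sigma$ of Theorem~\ref{Thrm: Curved orbits decomposition: quotient}, where it becomes a density on $\mathcal{H}=\Sigma/\R$. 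For this last step I will need the $\Sigma$-analogue of Proposition~\ref{Proposition: equivalence of densities}: since $n$ is tangent to $\Sigma$ and $\Sigma$ is flow-invariant, the restricted quotient $\Sigma\to\mathcal{H}$ inherits the principal $\R$-bundle structure from $M\to\Q$ and the same argument applies verbatim.

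Next, I would identify the zero sets fibrewise. On any fibre of $\pi$, which is an integral curve of $N=\nu n$ for an adapted scale $\nu$, the equation $n^a\nabla_a\sigma=-\rho\sigma$ becomes a homogeneous linear first-order ODE for $\sigma$; by uniqueness of solutions $\sigma$ is either identically zero or nowhere zero on any given fibre, so
\[
\bar\sigma^{-1}(0)=\pi\!\left(\mathcal{Z}(\sigma)\setminus\mathcal{Z}(n)\right)=\pi(\Sigma)=\mathcal{H}.
\]
The same reasoning applied to $n^a\nabla_a\lambda = -2\rho\lambda$, which is what remains of Lemma~\ref{CovariantDerivativesDirectionn} on $\Sigma$ once $\sigma=0$ is imposed, gives $\bar\lambda^{-1}(0)=\pi(\Sigma^0)=\mathcal{H}^0$.

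Finally, the boundary-defining-function property of $\bar\sigma$ follows from the earlier proposition establishing that $\nabla_a\sigma$ is nowhere vanishing on $\Sigma$: together with the identity $\dd\sigma=\pi^*\dd\bar\sigma$ and the surjectivity of $\pi_*$, this immediately rules out $\dd\bar\sigma$ vanishing anywhere on $\mathcal{H}$. The main conceptual obstacle, should there be one, is ensuring that $\bar\lambda$ really exists as a smooth density on $\mathcal{H}$ rather than as a merely formal flow-invariant restriction; but as noted this is handled by applying the construction of Section~\ref{Section: curved orbit decompostion} to the submersion $\Sigma\to\mathcal{H}$, which is legitimate precisely because $\Sigma$ is a smooth flow-invariant hypersurface and $n$ is nowhere vanishing on $M$.
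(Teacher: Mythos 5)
Your computation is exactly the paper's (implicit) argument: apply the characterisation of Proposition~\ref{Proposition: equivalence of densities} together with Lemma~\ref{CovariantDerivativesDirectionn} to get $D_n\sigma=0$ on $M$ and $D_n\lambda=2\sigma$, hence $\lambda$ descends only after restriction to the flow-invariant hypersurface $\Sigma$, with the zero sets and the boundary-defining property read off from the definitions of $\Sigma$, $\Sigma^0$ and from the non-vanishing of $\nabla_a\sigma$ along $\Sigma$. Your extra care about transporting the density identification to the quotient $\Sigma\to\mathcal{H}$ is a detail the paper leaves implicit, but it is handled correctly, so the proposal is sound and follows the same route.
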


\subsubsection{Tractors of $\Q$}\label{sssection: Tractors of Q}

In the previous subsection we established an equivalence \eqref{Equivalence of densities on Q and M} between densities on $\Q$ and densities on $M$ which are in the kernel of \eqref{Vertical derivative, definition}. In this section we will extend this equivalence to the corresponding tractor bundles $\mathcal{T}_{M} = \left(J^1 \mathcal{E}_{M}(1)\right)^* \rightarrow M $ and  $\mathcal{T}_{\Q} = \left(J^1 \mathcal{E}_{\Q}(1)\right)^* \rightarrow \Q$.

\begin{prop}\label{Proposition: Isomorphism of tractors on Q/M}
The isomorphism of Proposition \ref{Proposition: Isomorphism between densities} extends at the level of 1-jet and yields the canonical isomorphisms
\begin{align}
	\pi^*(\T_{\Q})^* &\simeq I^\circ \subset \T^*_{M}, & \pi^*\T_{\Q} &\simeq \T_M /I.
	\end{align}
\end{prop}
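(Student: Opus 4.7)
The plan is to use the canonical identification $\T^*_M \simeq J^1\E_M(1)$ (and analogously $\T^*_\Q \simeq J^1\E_\Q(1)$), and to promote the density pullback of Proposition~\ref{Proposition: Isomorphism between densities} to one on $1$-jets. Under this identification the universal $1$-jet operator is precisely the Thomas $D$-operator acting on $\E(1)$, which makes the construction explicit in terms of $D_A$. I would then argue by comparing ranks.

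Concretely, I would define a bundle map $\Pi : \pi^*\T^*_\Q \to \T^*_M$ pointwise by the recipe: given an element of $(\T^*_\Q)_{\pi(x)}$ represented by a density $\bar\lambda$ defined near $\pi(x)$, set $\Pi(\bar\lambda)_x := D_A(\pi^*\bar\lambda)|_x$, where $\pi^*\bar\lambda \in \Gamma(\E_M(1))$ uses the identification of Proposition~\ref{Proposition: Isomorphism between densities}. Independence from the choice of representative follows from the chain rule, since two representatives agreeing to first order at $\pi(x)$ yield pullbacks agreeing to first order at $x$. To see the image lies in $I^\circ$, compute in any scale
\begin{equation*}
I^A D_A(\pi^*\bar\lambda) = n^a \nabla_a(\pi^*\bar\lambda) + \rho\,\pi^*\bar\lambda = D_n(\pi^*\bar\lambda) = 0,
\end{equation*}
where the last equality is Proposition~\ref{Proposition: equivalence of densities}.

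For injectivity, suppose $D_A(\pi^*\bar\lambda)|_x = 0$; equivalently $(\pi^*\bar\lambda)(x)=0$ and $\nabla(\pi^*\bar\lambda)(x)=0$ in some scale. The first gives $\bar\lambda(\pi(x)) = 0$ by Proposition~\ref{Proposition: Isomorphism between densities}. For the second, the density pullback isomorphism is implemented by a nowhere-vanishing section $\Phi$ of $\pi^*\E_\Q(-1)\otimes \E_M(1)$, so $\nabla(\pi^*\bar\lambda) = (\nabla\Phi)\,\bar\lambda \circ\pi + \Phi\,\pi^*(\dd\bar\lambda)$; at the zero of $\bar\lambda$ the first term drops out, forcing $\pi^*(\dd\bar\lambda)(x)=0$. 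Surjectivity of $d\pi_x$ (since $\pi$ is a submersion) then gives $\dd\bar\lambda(\pi(x)) = 0$, hence $[\bar\lambda]^1_{\pi(x)} = 0$. Finally both $\pi^*\T^*_\Q$ and $I^\circ$ have rank $n+1$ (the former because $\dim\Q = n$, the latter as the annihilator of the nowhere vanishing section $I$ in the rank $n+2$ bundle $\T_M$), so $\Pi$ is a bundle isomorphism onto $I^\circ$. The dual identification $\pi^*\T_\Q \simeq \T_M/I$ follows by dualising, since $I^\circ$ is canonically $(\T_M/\langle I\rangle)^*$.

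I expect the main technical nuisance to be the injectivity step, specifically the careful bookkeeping of the density pullback isomorphism when taking first derivatives, so that vanishing of $\bar\lambda$ at a point really does trivialise the chain-rule correction term. Everything else is a routine consequence of the universal property of $1$-jet bundles and a dimension count.
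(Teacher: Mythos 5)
Your proposal is correct and is essentially the paper's own argument: both identify $\T^*$ with $J^1\E(1)$, use the Thomas $D$-operator on pullback densities to build the comparison map, note that $I^A D_A(\pi^*\bar\lambda)=D_n(\pi^*\bar\lambda)=0$ places the image in $I^\circ$, and conclude by fibrewise injectivity plus the rank count $n+1$, with the second isomorphism obtained by duality. The only difference is cosmetic — you construct the map $\pi^*\T^*_\Q\to I^\circ$ while the paper goes from $I^\circ$ to $\pi^*\T^*_\Q$ — and you spell out the injectivity/chain-rule bookkeeping that the paper leaves implicit.
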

\begin{proof}
As we have seen, any section $\tau$ of $\E_M(1)$ that satisfies $D_n\tau=0$, arises as $\pi^*\bar{\tau}$ for some section $\bar{\tau}$ of $\E_{\Q}(1)$. Now $0=D_n\tau= I^AD_A \tau$ so that at each point $p\in M$, $D_A \tau$ is in $I^\circ_p$. 
Consider the map $D_A \tau \mapsto \pi^* D_A \bar{\tau}$. By construction, $D_A \bar{\tau}_{\pi(p)}$ only depends on the $1$-jet of $\bar{\tau}$ at $\pi(p)$, and similarly for $D_A \tau$ at $p$, therefore this defines a vector bundle morphism $I^\circ \to \pi^*\T_{\Q}^*$, which is injective in the fibres. Since they have the same dimension this is a vector bundle isomorphism.
The other isomorphism is obtained by duality.
\end{proof}

Working in adapted scales (see Definition~\ref{AdaptedScales}), we can determine an analogue of~Proposition \ref{Proposition: equivalence of densities} that tells us when a section of $I^\circ \subset \T_M^*$ arises as a pullback of a section of $\T_{\Q}^*$. 
First, let us point out that the splitting sections $Y$ are $W$ determined by a connection $\nabla$ in a projective class in fact only depend on the connection $\nabla$ induces on the density bundle $\mathcal{E}(1)$. Therefore, one can define $Y$ and $W$ even \emph{in the absence of a projective structure} by simply choosing a connection on $\mathcal{E}(1)$. In particular, for every nowhere vanishing density $\nu$, there is always a connection on $\mathcal{E}(1)$ for which it is parallel.
\begin{coro}\label{Proposition: explicit tractor isomorphism}
A co-tractor field $V_A$ on $M$ can be written as the pullback $V_A =\pi^*\bar{V}_A$ of a tractor field $\bar{V}_A$ on $Q$ if and only if:
\begin{align*} 
n^a\nabla_a V_A &=0 &&\textrm{ and } &I^AV_A&=0.
\end{align*}
Let $\nu = \pi^*\bar{\nu}$ be an adapted scale on $M$, by the preceding remark, $\nu$ and $\bar{\nu}$ determine respectively connections on $\E_\Q(1)$, $\E_{M}(1)$ and splitting operators $\overline{Y},\overline{W}$ and $Y,W$  on\footnote{One has $Y_AI^A=\nu^{-1}I^AD_A \nu=\nu^{-1}D_n \nu=0$ and these are thus splitting operators on $I^{\perp} \subset \T_{M}^*$.} $(\T_\Q)^*$ and $(\T_{M})^*$; this is summarised by Figure~\ref{Figure: Compatible splittings on Q/M}. Using these connections to simultaneously split $\mathcal{T}_{M}$ and $\mathcal{T}_{\Q}$, the sections 
\begin{align*}
V_A&=\xi Y_A + \mu_c Z_A^c, & \bar{V}_A &= \bar{\xi} \bar{Y}_A + \bar{\mu}_c\bar{Z}_A^c,
\end{align*}
of $(\T_M)^*$ and $(\T_{\Q})^*$ are identified via the relations $\pi^*\bar{\xi} = \xi$ and $\pi^*\bar{\mu}=\mu$.
\end{coro}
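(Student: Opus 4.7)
The plan is to reduce both the iff characterisation and the component identification to density-level descent via Proposition \ref{Proposition: equivalence of densities}, by working in an adapted scale that kills every distracting algebraic term. Necessity of $I^A V_A = 0$ is built into Proposition \ref{Proposition: Isomorphism of tractors on Q/M}, which identifies $\pi^*\T^*_\Q$ with $I^\circ \subset \T^*_M$; the real content is to characterise which sections of $I^\circ$ actually arise as pullbacks.

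First I would fix an adapted scale $\nu = \pi^*\bar\nu$, which exists by Remark \ref{ScalesWithRho=0Exist}, and split $V_A = \xi Y_A + \mu_c Z^c_A$. In this scale $\rho = 0$ makes $I^A V_A = \xi\rho + n^c\mu_c$ collapse to $n^c\mu_c$, so $I^A V_A = 0$ is equivalent to the horizontality $n^c\mu_c = 0$. A straightforward computation using the tractor connection formulas \eqref{TractorConnection} yields
\begin{equation*}
\nabla_a V_A = (\nabla_a\xi - \mu_a)\, Y_A + (\nabla_a\mu_c + \xi P_{ac})\, Z^c_A,
\end{equation*}
and contracting with $n^a$, using $n^c\mu_c = 0$ together with the identity $P_{ab}n^b = 0$ furnished by Lemma \ref{Lemma: PropertiesAdaptedScales}, cancels the algebraic terms and leaves
\begin{equation*}
n^a\nabla_a V_A = (n^a\nabla_a\xi)\, Y_A + (n^a\nabla_a\mu_c)\, Z^c_A.
\end{equation*}
Thus, assuming horizontality, the condition $n^a\nabla_a V_A = 0$ splits cleanly into $n^a\nabla_a\xi = 0$ and $n^a\nabla_a\mu_c = 0$.

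The first of these, again thanks to $\rho = 0$, is exactly $D_n\xi = 0$ in the sense of Proposition \ref{Proposition: Linear vertical differential operator on densities}, so Proposition \ref{Proposition: equivalence of densities} produces a unique $\bar\xi \in \Gamma(\E_\Q(1))$ with $\xi = \pi^*\bar\xi$. For the covector component, the adapted-scale identity $\nabla_c n^a = 0$ (Lemma \ref{Lemma: PropertiesAdaptedScales}) turns $n^a\nabla_a\mu_c = 0$ into $\mathcal{L}_N(\nu^{-1}\mu) = 0$ with $N = \nu n$; combined with horizontality this is the standard descent condition for a horizontal $1$-form on the principal $\R$-bundle $M \to \Q$, which via the density isomorphism \eqref{Isomorphism between densities} supplies a unique $\bar\mu_c \in \Gamma(T^*\Q(1))$ with $\mu_c = \pi^*\bar\mu_c$. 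The reverse implication and the explicit component identification in the statement then follow by tracing these constructions backwards, since the splitting operators $Y, Z$ depend only on the connection $\nu$ induces on $\E_M(1)$---itself a pullback---and so restrict to $\pi^*\bar Y, \pi^*\bar Z$ on $I^\circ$ essentially tautologically.

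The main subtlety I anticipate is not any substantive computation but weight-bookkeeping: verifying that the pair of scale-induced splittings on $\T^*_M$ and $\T^*_\Q$ really are intertwined under $\pi^*$ on $I^\circ$, and that pulling back $\bar\mu \in \Gamma(T^*\Q(1))$ lands in the correct weighted cotangent bundle on $M$. Both points are routine once the density isomorphism of Proposition \ref{Proposition: Isomorphism between densities} is unwound carefully and the definition of $Y, Z$ in terms of the induced connection on $\E(1)$ is kept in mind.
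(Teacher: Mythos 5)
Your proposal is correct and takes essentially the same route as the paper: write $V_A$ in components in an adapted scale, use Lemma~\ref{Lemma: PropertiesAdaptedScales} ($\rho=0$, $P_{ab}n^b=0$, $\nabla_c n^a=0$) to reduce the two conditions to componentwise descent ($D_n\xi=0$ for the density and a horizontal, flow-invariant weighted $1$-form $\mu_c$), and then identify the splittings through the pullback behaviour of $Y_A=\nu^{-1}D_A\nu$. The only cosmetic differences are that the paper records the conditions in a generic scale before specialising, and leaves the $1$-form descent implicit where you spell it out as $\mathcal{L}_N(\nu^{-1}\mu)=0$.
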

\begin{proof}
Writing out the conditions on $V_A$ in terms of its components in a generic scale we have:
\begin{equation}\label{proof: explicit equivalence of tractors}
\begin{aligned}
I^A V_A = 0  \Leftrightarrow  n^a\mu_a + \xi \rho =0, \quad
\nabla_{n} V_A = 0  \Leftrightarrow \begin{cases} \nabla_{n} \xi = n^a\mu_a, \\ \nabla_{n}\mu_a +n^aP_{ab}\xi =0.\end{cases}
  \end{aligned}
\end{equation}	
Working in an adapted scale and making use of Lemma~\ref{Lemma: PropertiesAdaptedScales}, the above equations become equivalent to requiring that each components $\xi$ and $\mu$ are the pullback of corresponding fields $\bar{\xi}$ and $\bar{\mu}$ on $\Q$.
Now, by construction, the isomorphism of Proposition~\ref{Proposition: Isomorphism of tractors on Q/M} maps $D_A\nu=\nu Y_A$ to $D_A \bar{\nu}= \bar{\nu} \bar{Y}_A$. It follows that $\xi Y_A \mapsto \bar{\xi} \bar{Y}_A$ and then by linearity: $V_A -\xi Y_A \mapsto \bar{V}_A - \bar{\xi}\bar{Y}_A$.
\end{proof}
\begin{figure}[H]
\centering
\begin{tikzcd}
0 \arrow[r] & \pi^*T^*\Q(1) \arrow[r,"\pi^*\bar{Z}^a_A"] & \pi^*\T^*_{\Q} \arrow[l,"\pi^*\bar{W}^A_a", dashed, bend right=45,swap]\arrow[r,"\pi^*\bar{X}^A"] & \pi^*\E_{\Q}(1)\arrow[l,dashed, bend right=45,swap,"\pi^*\bar{Y}_A=\bar{s}^{-1}D_A\bar{s}"] \arrow[r]&0 \\
0 \arrow[r] & n^\circ \subset T^*M(1) \arrow[u,equal,"\simeqd"] \arrow[r,"Z^a_A"] &\arrow[l,"\substack{W^A_a\\ I^A=W^A_an^a }",dashed, bend left=45] I^\circ \subset \T^*_{M} \arrow[u,equal,"\simeqd"] \arrow[r,"X^A"] & \E_M(1) \arrow[u,equal,"\simeqd"]\arrow[l,"Y_A=s^{-1}D_A s",dashed, bend left=45] \arrow[r] &0
\end{tikzcd}
\caption{Compatible splittings in adapted scales}\label{Figure: Compatible splittings on Q/M}
\end{figure}

\subsection{The projective structure and the tractor connection of $\Q$}

\subsubsection{The induced projective structure on $\Q$}
We have already established in Proposition~\ref{Connection invariant under flow O} that the projective structure of $M$ induces a projective structure on $\Q_\mathcal{O}$; we now extend this to all of $\Q$.

\begin{prop}\label{Proposition: induced projective structure on Q}
	 Each adapted scale $s$ determines a connection $\bar{\nabla}^s$ on $\Q$. Furthermore, all such connections belong to the same projective structure.
\end{prop}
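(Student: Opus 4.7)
The plan is to show that, for each adapted scale $s$, the affine connection $\nabla^s$ in the projective class $[\nabla]$ that $s$ determines on $M$ descends to a torsion-free connection $\bar{\nabla}^s$ on $\Q$, and that replacing $s$ by another adapted scale changes $\bar{\nabla}^s$ by a projective transformation. This mimics the strategy of Proposition~\ref{Connection invariant under flow O} and Corollary~\ref{TheoFactorisationNablaSigma}, but carried out on all of $M$ rather than only on $\mathcal{O}$.

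For the existence of $\bar{\nabla}^s$, fix an adapted scale $s$ and set $N^a = s\, n^a$. Lemma~\ref{Lemma: PropertiesAdaptedScales} gives $\nabla^s_a n^b = 0$ and, combined with $\nabla^s s = 0$, yields $\nabla^s_a N^b = 0$, so $N$ is a $\nabla^s$-parallel complete vector field whose flow generates the fibres of $\pi$. Parallelism of $N$ and torsion-freeness of $\nabla^s$ together imply
\[ (\mathcal{L}_N \nabla^s)(Y, Z) = R^{\nabla^s}(N, Y) Z \]
for all vector fields $Y, Z$ on $M$. To descend to $\Q$ it suffices to show that this right-hand side is vertical, i.e.\ a multiple of $N$. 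Using the Weyl--Schouten decomposition of the projective curvature, the Weyl piece vanishes when contracted with $n^a$ by Lemma~\ref{ContractionNandWeyl}, the $\beta_{ab}\delta^c_d$ piece vanishes because an adapted scale is special, and the Schouten piece reduces, using $P_{ab}n^b = 0$ together with the symmetry of $P_{ab}$ in a special scale, to $n^c P_{bd}$. Hence $N^a R^{\nabla^s}_{ab}{}^c{}_d = P_{bd}\, N^c$, which is vertical. Lifting any vector fields $\bar{X}, \bar{Y}$ on $\Q$ to $N$-invariant vector fields $X, Y$ on $M$, the identity $[N, \nabla^s_X Y] = (\mathcal{L}_N \nabla^s)(X, Y) = P(X, Y)\, N$ is vertical, so $\pi_*(\nabla^s_X Y)$ is constant along fibres and independent of the chosen lifts; this yields a torsion-free connection $\bar{\nabla}^s$ on $\Q$, exactly as in Corollary~\ref{TheoFactorisationNablaSigma}.

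For the projective equivalence, let $s, s'$ be two adapted scales and write $s' = e^f s$ for a unique $f \in C^\infty(M)$. Applying $D_n$ to both sides and using that both $s$ and $s'$ are adapted forces $n(f) = 0$, so $f = \pi^* \bar{f}$ for a unique $\bar{f} \in C^\infty(\Q)$; consequently $df = \pi^* d\bar{f}$ is horizontal and $N$-invariant. On $M$ the projective change of scale reads $\nabla^{s'} = \nabla^s + df$, and pushing forward to $\Q$ gives $\bar{\nabla}^{s'} = \bar{\nabla}^s + d\bar{f}$, showing that $\bar{\nabla}^{s'}$ is projectively equivalent to $\bar{\nabla}^s$.

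The main obstacle, compared with the situation on $\mathcal{O}$, is that the projective Schouten tensor $P_{ab}$ need not vanish in an adapted scale away from the open orbit, so the stronger identity $\mathcal{L}_N \nabla^s = 0$ used in Proposition~\ref{Connection invariant under flow O} no longer holds. The key point is that only \emph{verticality} of $\mathcal{L}_N \nabla^s$ is needed for the quotient connection to be well-defined, and this is delivered exactly by the alignment condition $P_{ab} n^b = 0$ of Lemma~\ref{Lemma: PropertiesAdaptedScales} together with the vanishing of $n \lrcorner W$.
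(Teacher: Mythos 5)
Your proof is correct and takes essentially the same route as the paper: working in an adapted scale, you show $\mathcal{L}_{N}\nabla^{s}$ is vertical — equal to $N^{c}P_{bd}$ — using $\nabla^{s}N=0$, $n^{a}W_{ab\phantom{c}d}^{\phantom{ab}c}=0$, $\beta_{ab}=0$ and $P_{ab}n^{b}=0$, which is exactly the identity $(\mathcal{L}_{N^{s}}\nabla^{s})^{a}_{bc}=(N^{s})^{a}P_{bc}$ the paper uses (dually, on pullback $1$-forms) to descend the connection, and you then observe that two adapted scales differ by a basic exact $1$-form, giving projective equivalence on $\Q$. The only slip is the sign of the projective change ($\Upsilon_{a}=-\nabla_{a}f$ when $s'=e^{f}s$), which does not affect the conclusion.
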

\begin{proof}
Let $s$ be an adapted scale and let us consider the connection $\nabla$ it determines in the projective class.
As in the proof of Proposition~\ref{Connection invariant under flow O}, we can evaluate the Lie derivative of the connection.
It follows from Lemma~\ref{Lemma: PropertiesAdaptedScales} and Lemma~\ref{ContractionNandWeyl} that:
\[ (\mathcal{L}_{N^s}\nabla^s)^a_{bc} = (N^s)^a P_{bc}, \]
Let $\beta$ be a $1$-form on $\Q$, and $\bar{Z}$ a vector field on $\Q$ to vector fields on $M$. Locally lift $\bar{Z}$ to a vector field $Z$ on $M$ with the property that $\nabla_n Z=0$ and set:
\[\pi^*\bar{\nabla}^s_{\bar{Z}} \alpha = \nabla_{Z}(\pi^*\beta). \]
To check that the definition make sense we must prove that right-hand side is independent of the lift and can be written as the pullback of a form on $\Q$. First note that $\nabla_{N^s} (\pi^*\beta)=0$, hence, two such lifts differ by a vector that is proportional to $N^s$, the formula, if well-defined, is independent of the choice of lift. Second:
\[\mathcal{L}_{N^s} \nabla_{Z}(\pi^*\beta)=\nabla_{[N^s,Z]}(\pi^*\beta) + \pi^*\beta\left(\mathcal{L}_{N^s}\nabla^s(Z, \cdot)\right) +\nabla_{Z} \mathcal{L}_{N^s}\pi^*\beta=0,
 \]
whilst $(\nabla_{\bar{Z}}\pi^*\beta)(N^s)=0$, showing that the connection is well-defined.

If $s_1$ is another adapted scale then \[\nabla^{s_1} = \nabla^{s} - s_1^{-1}\nabla s_1= \nabla^{s}-f^{-1}\nabla f,\] where $f=s_1s^{-1}$. As $D_n s_1= D_n s= 0$ it follows that $f=\pi^* \bar{f}$ and $\frac{df}{f}= \pi^*\frac{d\bar{f}}{\bar{f}}$, so that:
\[\begin{aligned} \nabla_{Z}^{s_1}\pi^*\beta - \nabla_{Z}^{s}\pi^*\beta&= f^{-1}\dd f(\pi^*\beta)(Z) + f^{-1}\dd f(Z)\pi^*\beta \\&= (\pi^*\bar{f})^{-1}\dd (\pi^*\bar{f})\pi^*(\beta(\bar{Z}))+ (\pi^*\bar{f})^{-1}\dd(\pi^*\bar{f})(Z) \pi^*\beta
\\&=\pi^*\left(\bar{f}^{-1}\dd\bar{f}\beta(\bar{Z})\right)+ (\pi^*\bar{f})^{-1}\pi^*(\dd \bar{f})(Z) \pi^*\beta \\&=
\pi^*\left(\bar{f}^{-1}\dd\bar{f}\beta(\bar{Z})+ \bar{f}^{-1} \bar{f}(\bar{Z})\beta\right).
\end{aligned}\]
Therefore all connections constructed in this manner are projectively related.
\end{proof}
The following is an immediate consequence of the definition the connection:
\begin{prop}
If $s=\pi^*\bar{s}$ is an adapted scale then the curvature tensors $R$ and $\bar{R}$ of $\nabla^s$ and $\bar{\nabla}^{s}$ satisfy:
\[\pi^*(\alpha(\bar{R}(\bar{X},\bar{Y})(-))=(\pi^*\alpha)(R(X,Y)(-)),\]
for every $1$-form $\alpha$ and vector fields $\bar{X},\bar{Y}$, on $\Q$ and $X,Y$ are arbitrary lifts of $\bar{X},\bar{Y}$ respectively that satisfy $\nabla_nX = \nabla_n Y=0$.
\end{prop}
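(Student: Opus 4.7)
My plan is to unwind the curvature acting on the pullback $\pi^*\alpha$ of a $1$-form on $\Q$ and check that each of the three terms $\nabla^s_X \nabla^s_Y$, $\nabla^s_Y \nabla^s_X$ and $\nabla^s_{[X,Y]}$ applied to $\pi^*\alpha$ pushes through the pullback cleanly. The decisive preliminary observation, which I would establish first, is that $\nabla^s_n(\pi^*\alpha) = 0$ in the adapted scale. This follows because $\mathcal{L}_n(\pi^*\alpha) = 0$ (the pullback is fibre-invariant) combined with the adapted-scale identity $\nabla_a n^b = 0$ from Lemma~\ref{Lemma: PropertiesAdaptedScales} and the torsion-free relation $\nabla_n\beta = \mathcal{L}_n\beta - \beta(\nabla_\cdot n)$.

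Two consequences of this vanishing feed into the main computation. First, the defining property of $\bar{\nabla}^s$ actually holds for \emph{any} lift $Z$ of $\bar{Z}$, not only those with $\nabla_n Z = 0$: two lifts differ by a vertical field $fn$, and $\nabla^s_{fn}(\pi^*\alpha) = f\nabla^s_n(\pi^*\alpha) = 0$. Second, since $\bar{\nabla}^s_{\bar Y}\alpha$ is itself a $1$-form on $\Q$, the defining relation can be iterated: for $X, Y$ lifts satisfying $\nabla_n X = \nabla_n Y = 0$,
\begin{equation*}
\nabla^s_X \nabla^s_Y (\pi^*\alpha) \;=\; \nabla^s_X \pi^*(\bar{\nabla}^s_{\bar Y}\alpha) \;=\; \pi^*(\bar{\nabla}^s_{\bar X}\bar{\nabla}^s_{\bar Y}\alpha),
\end{equation*}
and symmetrically for $\nabla^s_Y\nabla^s_X(\pi^*\alpha)$. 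For the bracket term, $[X,Y]$ is $\pi$-related to $[\bar X,\bar Y]$ and hence is some lift of it; by the first consequence above, $\nabla^s_{[X,Y]}(\pi^*\alpha) = \pi^*(\bar{\nabla}^s_{[\bar X,\bar Y]}\alpha)$ regardless of whether $\nabla_n[X,Y]$ vanishes (in general it does not, which is precisely why the first consequence is needed).

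Assembling the three pieces gives $R(X,Y)(\pi^*\alpha) = \pi^*(\bar R(\bar X,\bar Y)\alpha)$ as $1$-forms on $M$. To match the statement of the proposition, I would then dualise using $(R(X,Y)\beta)(V) = -\beta(R(X,Y)V)$, which holds because $R(X,Y)$ acts as a derivation commuting with contractions: evaluating on any $V\in TM$ yields $(\pi^*\alpha)(R(X,Y)V) = \alpha(\bar R(\bar X,\bar Y)\pi_*V) = \pi^*(\alpha(\bar R(\bar X,\bar Y)(-)))(V)$, as required. The only step requiring any genuine work is the preliminary identity $\nabla^s_n(\pi^*\alpha) = 0$, whose truth depends crucially on the adapted scale condition; once this is in hand, the rest is bookkeeping.
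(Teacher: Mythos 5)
Your proof is correct and follows the route the paper intends: the paper states this proposition as an immediate consequence of the definition $\pi^*(\bar{\nabla}^s_{\bar{Z}}\beta)=\nabla_Z(\pi^*\beta)$, and you simply iterate that definition on the three curvature terms, handling the bracket term via the key fact $\nabla_{N^s}(\pi^*\alpha)=0$ (already noted in the paper's construction of $\bar{\nabla}^s$) and then dualising. The only cosmetic point is weight bookkeeping — statements like $\mathcal{L}_n(\pi^*\alpha)=0$ and $\nabla^s_{fn}$ should be phrased with the true vector field $N^s=s\,n^a$, which is harmless in the adapted scale since $\nabla^s s=0$.
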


\begin{coro}\label{SchoutenTensorMagic}
Let $s=\pi^*\bar{s}$ be an adapted scale then for any vector fields $\bar{Y},\bar{Z}$ on $\Q$ we have:
\[\pi^*(\bar{P}(\bar{Y},\bar{Z}))= P(Y,Z), \]
where $P$ and $\bar{P}$ are the Schouten tensors of $\nabla^s$ and $\bar{\nabla}^{s}$ respectively and $Y$ and $Z$ are lifts of $\bar{Y}, \bar{Z}$ such that $\nabla_n Y=\nabla_n Z=0$.
\end{coro}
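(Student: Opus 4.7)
The plan is to deduce the identity for the Schouten tensors from the corresponding identity for the Ricci tensors, exploiting the fact that both $s$ and $\bar{s}$ are special scales. Since each is a nowhere-vanishing density preserved by its associated connection, the trace parts $\beta$ and $\bar{\beta}$ vanish, so that
\[ R_{ab} = n\,P_{ab} \quad\text{on } M,\qquad \bar{R}_{ab} = (n-1)\,\bar{P}_{ab} \quad\text{on } \Q. \]
It is therefore enough to prove that
\[ \mathrm{Ric}(Y,Z) \;=\; \pi^*\overline{\mathrm{Ric}}(\bar{Y},\bar{Z}) + P(Y,Z), \]
as then $P(Y,Z)=\pi^*\bar{P}(\bar{Y},\bar{Z})$ follows by elementary arithmetic (with $n\geq 2$).

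To establish this, pick a local frame $(\bar{E}_1,\dots,\bar{E}_n)$ of $T\Q$, lift it to a family $(E_1,\dots,E_n)$ on $M$ satisfying $\nabla_n E_i = 0$, and complete it with $N = s\,n$ to form a local frame of $TM$. The vector field $N$ is parallel: indeed $\nabla_a N^b = s\,\nabla_a n^b + (\nabla_a s)\,n^b = 0$ by Lemma~\ref{Lemma: PropertiesAdaptedScales} and the choice of scale. The dual frame is $(\pi^*\bar{\omega}^1,\dots,\pi^*\bar{\omega}^n,\eta)$ with $\eta(N)=1$ and $\eta(E_i)=0$. Writing $\mathrm{Ric}(Y,Z)=\mathrm{tr}\bigl(X\mapsto R(X,Y)Z\bigr)$ decomposes the trace as
\[ \mathrm{Ric}(Y,Z) \;=\; \sum_i (\pi^*\bar{\omega}^i)\bigl(R(E_i,Y)Z\bigr) \;+\; \eta\bigl(R(N,Y)Z\bigr), \]
and the preceding proposition, applied termwise, identifies the horizontal sum with $\pi^*\overline{\mathrm{Ric}}(\bar{Y},\bar{Z})$.

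The vertical term is where the real computation lies. In the special scale $s$, the curvature decomposes as $R_{ab\phantom{c}d}^{\phantom{ab}c} = W_{ab\phantom{c}d}^{\phantom{ab}c} + \delta^c_a P_{bd} - \delta^c_b P_{ad}$. Contracting against $n^a$ and invoking Lemma~\ref{ContractionNandWeyl} (so $n^a W_{ab\phantom{c}d}^{\phantom{ab}c} = 0$) together with the adapted-scale identity $n^a P_{ab} = 0$ supplied by Lemma~\ref{Lemma: PropertiesAdaptedScales} yields $n^a R_{ab\phantom{c}d}^{\phantom{ab}c} = n^c P_{bd}$, which translates into $R(N,Y)Z = P(Y,Z)\,N$. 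Hence $\eta(R(N,Y)Z) = P(Y,Z)$, and combined with the previous paragraph this gives the desired identity.

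The heart of the argument is thus the vertical contraction, which depends crucially on the two tensorial identities $n^a W_{ab\phantom{c}d}^{\phantom{ab}c} = 0$ and $n^a P_{ab} = 0$ furnished by earlier results: together they guarantee both that only the Schouten contribution survives and that it is aligned with $N$, so that $\eta$ isolates exactly $P(Y,Z)$. The remaining arithmetic step—recognising that the $+\,P(Y,Z)$ correction exactly reconciles the dimensional factors $\tfrac{1}{n}$ and $\tfrac{1}{n-1}$ relating Schouten to Ricci on $M$ and on $\Q$ respectively—is then automatic.
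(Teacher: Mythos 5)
Your proof is correct and is essentially the paper's argument with the implicit trace computation written out: like the paper, you combine the curvature proposition immediately preceding Corollary~\ref{SchoutenTensorMagic} with Lemma~\ref{ContractionNandWeyl} and Lemma~\ref{Lemma: PropertiesAdaptedScales} to get $\pi^*\overline{\mathrm{Ric}}(\bar Y,\bar Z)=\mathrm{Ric}(Y,Z)-P(Y,Z)$, and then convert Ricci into Schouten on both sides (your coefficients $n$ and $n-1$, versus the paper's $n-1$ and $n-2$, simply reflect the convention $\dim M=n+1$, and only their difference matters, so the conclusion is the same). The one step you assert rather than prove is that $\bar\beta=0$ on $\Q$ because $\bar\nabla^{s}$ preserves $\bar s$ --- in the paper this fact is only established in the corollary that follows; it is cleaner, and is what the paper does, to observe that $\mathrm{Ric}-P$ is symmetric, hence $\bar R$ is symmetric, hence $\bar\beta=0$ and $\bar R$ is proportional to $\bar P$, which your own formula already delivers.
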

\begin{proof}
It follows immediately from the above that:
\[ \pi^*(\bar{R}(\bar{Y},\bar{Z}))=\underbrace{R(Y,Z)}_{\substack{\veq\\(n-1)P(Y,Z)}} - P(Y,Z)=(n-2)P(Y,Z).\]
Since the Schouten tensor $P$ is symmetric ($\nabla^s$ is special) it follows that the Ricci tensor $\bar{R}$ is symmetric (therefore we recover that $\bar{\nabla}^s$ is special), and so $\bar{R}(\bar{Y},\bar{Z})=(n-2)\bar{P}(\bar{Y},\bar{Z})$
\end{proof}

\begin{coro}For any adapted scale $s=\pi^*\bar{s}$:
$\bar{\nabla}^s\bar{s}=0$.
\end{coro}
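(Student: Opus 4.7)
The plan is to extend the defining relation of $\bar{\nabla}^s$ on $1$-forms, established in Proposition~\ref{Proposition: induced projective structure on Q}, to an analogous relation on projective densities, and then evaluate on $\bar s$. Under the canonical identification $\pi^*\mathcal{E}_\Q(w)\simeq \mathcal{E}_M(w)$ of Proposition~\ref{Proposition: Isomorphism between densities}, the density $\bar s$ corresponds to $s$, and since $\nabla^s s=0$ by definition of the scale determined by $s$, the conclusion $\bar{\nabla}^s\bar s=0$ will follow at once.

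Concretely, for $\bar\tau\in\Gamma(\mathcal{E}_\Q(w))$ and $\bar Z$ a vector field on $\Q$, I would define
\[
\pi^*\bigl(\bar{\nabla}^s_{\bar Z}\bar\tau\bigr) := \nabla^s_Z\bigl(\pi^*\bar\tau\bigr),
\]
where $Z$ is a local lift of $\bar Z$ satisfying $\nabla^s_n Z = 0$. Two well-definedness checks are needed: independence from the choice of lift, and that the right-hand side is itself the pullback of a density on $\Q$. Both reduce to consequences of Lemma~\ref{Lemma: PropertiesAdaptedScales}: in an adapted scale, $\rho=Y_AI^A=0$, so $D_n = \nabla^s_n$ on densities; Proposition~\ref{Proposition: equivalence of densities} then gives $\nabla^s_n(\pi^*\bar\tau)=0$, which immediately yields independence of the lift, since any two admissible lifts differ by a multiple of $n$.

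For the descent condition I would use the commutator identity
\[
\nabla^s_n\nabla^s_Z(\pi^*\bar\tau) = \nabla^s_Z\nabla^s_n(\pi^*\bar\tau) + \nabla^s_{[n,Z]}(\pi^*\bar\tau) + R(n,Z)\cdot\pi^*\bar\tau.
\]
In an adapted scale $\nabla^s_a n^b = 0$; combined with $\nabla^s_n Z = 0$ and torsion-freeness this gives $[n,Z]=0$. The curvature on $\mathcal{E}_M(w)$ is $w\beta(n,Z)$, which vanishes because the adapted scale is special ($\beta=0$). Together with $\nabla^s_n(\pi^*\bar\tau)=0$, the right-hand side above is zero, so $\nabla^s_Z(\pi^*\bar\tau)$ is indeed a pullback. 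By duality with $1$-forms (already treated in Proposition~\ref{Proposition: induced projective structure on Q}) and the Leibniz rule, this extension necessarily coincides with the canonical action of $\bar{\nabla}^s$ on $\mathcal{E}_\Q(w)$.

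It then suffices to specialise to $\bar\tau=\bar s$: since $\pi^*\bar s=s$ under the identification of Proposition~\ref{Proposition: Isomorphism between densities}, we obtain $\pi^*(\bar{\nabla}^s_{\bar Z}\bar s) = \nabla^s_Z s = 0$ for every $\bar Z$, whence $\bar{\nabla}^s\bar s = 0$. The main obstacle is the consistency check in the middle step — verifying that the proposed pullback formula genuinely defines the canonical action of $\bar{\nabla}^s$ on densities — and the argument hinges entirely on the three properties $\rho=0$, $\nabla^s n = 0$ and $\beta = 0$ that characterise adapted scales.
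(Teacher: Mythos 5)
Your proof is correct and follows essentially the same route as the paper: both reduce $\bar{\nabla}^s\bar{s}=0$ to $\nabla^s s=0$ by showing that the induced connection acts on (pulled-back) densities through the defining relation $\pi^*(\bar{\nabla}^s_{\bar Z}\,\cdot\,)=\nabla^s_Z\pi^*(\cdot)$. The paper simply shortcuts your well-definedness checks by working with $\bar{\nu}=\bar{s}^{-(2n+2)}$, identified with a section of $(\Lambda^n T^*\Q)^{\otimes 2}$ on which $\bar{\nabla}^s$ already acts by the Leibniz rule, whereas you re-derive the action on $\mathcal{E}_\Q(w)$ directly via the descent argument (using $\rho=0$, $\nabla^s n=0$, $\beta=0$), which is a valid, slightly more detailed version of the same idea.
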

\begin{proof}
Let $\nu = s^{-(2n+2)}=\pi^*\bar{\nu}$. By definition, we have $\bar{s}^{-(2n+2)}=\bar{\nu}$. These densities are identified with sections of $(\Lambda^nT^*\Q)^{\otimes 2}$, we can apply the definition of $\bar{\nabla}$ and the Leibniz rule to see that:
\[ \pi^*\bar{\nabla}_{\bar{Y}}\bar{\nu}=\nabla_{Y} \pi^*\bar{\nu} = \nabla_{Y}\nu=0. \]
\end{proof}
\subsubsection{The induced tractor connection on $\Q$}

The aim of this section is to show that the projective structures fit together nicely.
\begin{prop}\label{Proposition: InducedConnectionOnT*Q}
The tractor connection on $\T^*_M$ induces a connection on $I^\circ$ and a connection $\bar{\nabla}$ on the bundle $\T^*_{\Q}$.
\end{prop}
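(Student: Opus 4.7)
The strategy is two-stage: first restrict the tractor connection $\nabla$ on $\T^*_M$ to the subbundle $I^\circ$, then use the isomorphism $\pi^*\T^*_{\Q} \simeq I^\circ$ of Proposition~\ref{Proposition: Isomorphism of tractors on Q/M} to push this restricted connection down to $\T^*_{\Q}$. The first stage is immediate: since $I^A$ is covariantly constant, any $V_A \in \Gamma(I^\circ)$ satisfies $I^A \nabla_b V_A = \nabla_b(I^A V_A) - (\nabla_b I^A) V_A = 0$, so $\nabla_b V_A \in I^\circ$.

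For the second stage, given $\bar{V} \in \Gamma(\T^*_{\Q})$ and $\bar{Y} \in \Gamma(T\Q)$, I set $V = \pi^*\bar{V} \in \Gamma(I^\circ)$ and let $Y$ be any lift of $\bar{Y}$ to a vector field on $M$; then define $\bar{\nabla}_{\bar{Y}}\bar{V}$ by the formula $\pi^*\bar{\nabla}_{\bar{Y}}\bar{V} := \nabla_Y V$. Three points need verification. Independence of the lift $Y$ is immediate, since two lifts differ by $fn^a$ and $\nabla_{fn} V = f\,\nabla_n V = 0$ by the characterisation of pullback tractors in Corollary~\ref{Proposition: explicit tractor isomorphism}. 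The Leibniz rule and $\C$-linearity inherit trivially from $\nabla$. The substantive point is that $\nabla_Y V$ is itself a pullback, i.e. again satisfies the two conditions of Corollary~\ref{Proposition: explicit tractor isomorphism}: $I^A(\nabla_Y V)_A = 0$ follows from the first stage, while $n^a \nabla_a(\nabla_Y V)_A = 0$ is the content of the main calculation.

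To carry out this calculation, I would work in an adapted scale (Definition~\ref{AdaptedScales}), which by Lemma~\ref{Lemma: PropertiesAdaptedScales} affords the simplifications $\rho=0$, $\nabla_a n^b = 0$, and $P_{ab} n^b = 0$. Splitting $V_A = \xi Y_A + \mu_c Z^c_A$ with $\xi, \mu$ pullbacks (as in Figure~\ref{Figure: Compatible splittings on Q/M}) and using Eq.~\eqref{TractorConnection}, a short computation yields
\[
(\nabla_Y V)_A = (Y^b \nabla_b \xi - Y^b \mu_b)\, Y_A + (\xi\, Y^b P_{bc} + Y^b \nabla_b \mu_c)\, Z^c_A.
\]
The top slot is manifestly basic. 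For the bottom slot, the term $Y^b \nabla_b \mu_c$ is basic by Proposition~\ref{Proposition: induced projective structure on Q} (which showed precisely that the induced connection $\bar{\nabla}$ on $\Q$ factors), and the term $\xi\, Y^b P_{bc}$ is basic by Corollary~\ref{SchoutenTensorMagic}. I expect the bottom-slot descent to be the main obstacle, because the Schouten tensor is not parallel and its compatibility with the quotient rests crucially on both the adaptedness identity $P_{ab} n^b = 0$ and the nontrivial matching of Schouten tensors of Corollary~\ref{SchoutenTensorMagic}. Once this descent is verified, $\bar{\nabla}_{\bar{Y}}\bar{V}$ is a well-defined section of $\T^*_{\Q}$ and the defining formula provides the desired connection.
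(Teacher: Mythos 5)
Your proposal is correct, and its first half (restricting $\nabla$ to $I^\circ$ via parallelism of $I^A$, defining $\bar{\nabla}$ through the isomorphism of Proposition~\ref{Proposition: Isomorphism of tractors on Q/M}, and the lift-independence argument) coincides with the paper's proof. Where you genuinely diverge is in the key verification that $\nabla_Y V$ is again a pullback section: the paper establishes $n^a\nabla_a(\nabla_Z V)=0$ invariantly, by writing $\nabla_N\nabla_Z V-\nabla_Z\nabla_N V$ via the commutator identity and killing the remaining term with the curvature contraction $n^a\Omega_{ab}{}^A{}_B=0$ (Lemma~\ref{Lemma: n^aF_a =0}, proved in the $\sigma$-scale and extended by continuity), whereas you split $V_A=\xi Y_A+\mu_c Z^c_A$ in an adapted scale and check slot by slot that the components descend, quoting Proposition~\ref{Proposition: induced projective structure on Q} for the $Y^b\nabla_b\mu_c$ term and Corollary~\ref{SchoutenTensorMagic} together with $P_{ab}n^b=0$ for the Schouten term. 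Both work and neither is circular; the paper's route isolates a reusable, scale-independent curvature fact, while yours produces along the way the explicit component formula for $\bar{\nabla}$, which is essentially the computation the paper defers to Proposition~\ref{Proposition: normal tractor connection on Q} to identify $\bar{\nabla}$ as the normal connection — so your argument proves well-definedness and prepares normality in one pass. Two small points to tighten: \enquote{any lift} is enough only for the pointwise value of $\nabla_Y V$; when verifying constancy along the fibres you must take a lift with $\nabla_n Y=0$, since that hypothesis is built into Proposition~\ref{Proposition: induced projective structure on Q} and Corollary~\ref{SchoutenTensorMagic} (and, via Lemma~\ref{Lemma: PropertiesAdaptedScales}, into $\nabla_n\mu_c=0$ for pullback cotractors). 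Also, $\mu_c$ carries projective weight $1$, so you implicitly need the (easy, since adapted scales are special and the density bundle is then flat) weighted extension of Proposition~\ref{Proposition: induced projective structure on Q}; stating this would make the bottom-slot descent airtight.
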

\begin{proof}
Since $I^A$ is parallel if $I^AV_A=0$ then $I^A\nabla_a V_A=0$, so $\nabla$ restricts to a connection on $I^\circ$. To define a connection on $\mathcal{T}^*_{\Q}$, let $\bar{Z}$ be a vector field on $Q$ and let $Z$ be any local lift of $\bar{Z}$ to $M$ that satisfies $\nabla_n Z=0$.
Let $\phi: \pi^* \T_{\Q} \rightarrow I^\circ$ denote the bundle morphism established in the previous section and set:
\[ \pi^*\bar{\nabla}_{\bar{Z}} V= \phi^{-1}\nabla_{Z}\phi(\pi^* V).  \]
This is independent on the choice of lift $Z$ as any other lift $Z'$ is such that $(Z-Z')\propto n$ and $\nabla_n \phi(\pi^* V)=0$. This second identity can be seen using an adapted scale in which the components $\xi$ and $\mu_c$ of $\phi(\pi^*V)$ must satisfy $\nabla_n \xi =0$, $\nabla_n \mu_c=0$ and $\mu_cn^c=0$. Since in adapted scales $P_{ab}n^a=0$, this means precisely that the tractor is covariantly constant in the direction of $n$.
It remains to check that $\nabla_n \nabla_Z \phi(\pi^*V) =0$ so that the right-hand side is indeed a pullback section.

For any choice of adapted scale $\nu$, setting $N=\nu n$, we have the identity for cotractors $V_A$:
	\begin{align}
	\nabla_N (\nabla_Z V_A) - \nabla_Z (\nabla_N V_A) &= [Z,N]^a \nabla_a V_A -N^a Z^b\Omega_{ab}{}^B{}_A V_B
	\end{align}
	Using this with $V=\phi(\pi^*V)$ and our assumption on the lift: 
	\begin{align}
	\nabla_N (\nabla_Z V_A) &= -N^a Z^b\Omega_{ab}{}^B{}_A V_B.
	\end{align}
	The vanishing of this last term, concluding the proof, will be given by the following Lemma:
	\begin{lemm}\label{Lemma: n^aF_a =0}
		\begin{equation}
		n^a \Omega_{ab}{}^A{}_B =0.
		\end{equation}
	\end{lemm}
	\begin{proof}
		The tractor curvature is a projectively invariant tensor which, in any given scale, is, by Eq.~\eqref{TractorCurvature}, completely parametrised by the (projective) Weyl and (projective) Cotton tensors. Evaluating their contraction with $n^a$ in the scale $\sigma$ one sees from \eqref{Riemann in the scale sigma}, \eqref{Levi-civita in the scale sigma} that their contraction with $n^a$ always vanishes. This extends to the points where $\sigma=0$ by continuity.
\end{proof}
It follows that $\nabla_N(\nabla_Z V_A)$ vanishes and $\bar{\nabla}$ is well-defined.
\end{proof}

\begin{prop} \label{Proposition: TractorGeometryofQ}\phantom{a}
\begin{enumerate}
\item $H^{AB}$ induces a parallel section $\bar{H}^{AB}$ on $\Q$. It is a symmetric pairing of rank $n-1$ on $\T_{\Q}^*$.
\item $I_A=D_A\sigma$ induces a parallel section $\bar{I}_A$ of $\T_{\Q}^*$ and $\bar{I}_A$ is in the radical of $\bar{H}^{AB}$.
\item $\bar{I}_A=D_A\bar{\sigma}$.
\end{enumerate}
\end{prop}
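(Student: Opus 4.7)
The plan is to leverage the descent machinery of Proposition~\ref{Proposition: Isomorphism of tractors on Q/M} and Corollary~\ref{Proposition: explicit tractor isomorphism} throughout, applied to the parallel tensors $H^{AB}$ and $I_A:=\Phi_{AB}I^B$. Part~(1) is the tensor-product version of the descent criterion: since $I^A$ is parallel, $\nabla$ descends to the quotient $\T_M/\langle I\rangle\simeq\pi^*\T_\Q$ and thus to its symmetric tensor square; as $H^{AB}$ is parallel, the class $[H^{AB}]$ in $(\T_M/\langle I\rangle)^{\otimes 2}$ satisfies $\nabla_n[H^{AB}]=0$, and hence descends to a symmetric section $\bar H^{AB}\in\Gamma(\T_\Q\otimes\T_\Q)$. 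Parallelism $\bar\nabla_c\bar H^{AB}=0$ is obtained by repeating the argument of Proposition~\ref{Proposition: InducedConnectionOnT*Q}: for a vector field $\bar Z$ on $\Q$ lifted to $Z$ on $M$ with $\nabla_n Z=0$, the defining formula for $\bar\nabla_{\bar Z}\bar H$ computes the class of $\nabla_Z H^{AB}=0$.

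For Part~(2), $I_A$ is parallel as a contraction of the parallel objects $\Phi_{AB}$ and $I^B$, and lies in $I^\circ\subset\T_M^*$ since $I^AI_A=\Phi_{AB}I^AI^B=0$ by the nullity of $I$. Corollary~\ref{Proposition: explicit tractor isomorphism} produces $\bar I_A\in\Gamma(\T_\Q^*)$ with $\pi^*\bar I_A=I_A$, and $\bar\nabla\bar I=0$ descends from $\nabla I_A=0$ as for $\bar H$. The radical property—which also controls the rank drop in Part~(1)—is a one-line computation: for any $\bar V\in\Gamma(\T_\Q^*)$ with lift $V\in I^\circ$,
\[ \bar H^{AB}\bar I_A\bar V_B \;=\; H^{AB}I_AV_B \;=\; I^BV_B \;=\; 0. \]
Pointwise linear algebra (invertibility of $H$ combined with nullity of $I$) further identifies $\langle I_A\rangle$ with the full radical of the pairing $(V,W)\mapsto H^{AB}V_AW_B$ on $I^\circ$, thereby pinning down the rank of $\bar H^{AB}$.

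For Part~(3), I would first verify that $\sigma$ is an adapted scale (Definition~\ref{AdaptedScales}): Lemma~\ref{CovariantDerivativesDirectionn} yields $n^a\nabla_a\sigma=-\rho\sigma$, hence $D_n\sigma=n^a\nabla_a\sigma+\rho\sigma=0$ for the weight-$1$ density $\sigma$. Therefore $\sigma=\pi^*\bar\sigma$, and the splittings of $\T_M^*$ and $\T_\Q^*$ determined by $\sigma$ and $\bar\sigma$ are intertwined by the diagram of Figure~\ref{Figure: Compatible splittings on Q/M}. Expanding
\[ D_A\sigma=\sigma Y_A+(\nabla_a\sigma)Z_A^a,\qquad D_A\bar\sigma=\bar\sigma\,\bar Y_A+(\bar\nabla_a\bar\sigma)\bar Z_A^a, \]
the density component satisfies $\sigma=\pi^*\bar\sigma$, while $\nabla_a\sigma$ identifies with $\pi^*(\bar\nabla_a\bar\sigma)$ inside $n^\circ\subset T^*M$, its vertical component $n^a\nabla_a\sigma$ vanishing by $\rho=0$ in the scale $\sigma$ (Lemma~\ref{Lemma: PropertiesAdaptedScales}). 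Corollary~\ref{Proposition: explicit tractor isomorphism} then matches $D_A\sigma=I_A$ with $\pi^*D_A\bar\sigma$, yielding $\bar I_A=D_A\bar\sigma$.

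The main obstacle is conceptual rather than computational: Corollary~\ref{Proposition: explicit tractor isomorphism} and Proposition~\ref{Proposition: InducedConnectionOnT*Q} are phrased for cotractors, so I must extend both the descent criterion and the induced connection to tensor products in order for Part~(1) to make literal sense for $H^{AB}\in\Gamma(\T_M\otimes\T_M)$. Once this tensor-bookkeeping is set up, what remains reduces to the linear algebra of the null vector $I$ inside the non-degenerate pairing $\Phi$, together with the identities $\nabla H^{AB}=0$ and $\nabla I^A=0$.
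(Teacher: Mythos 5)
For items (1) and (2) your argument is essentially the one in the paper: descend the parallel sections through the identifications of Proposition~\ref{Proposition: Isomorphism of tractors on Q/M}, obtain parallelism by evaluating the induced connection on lifts $Z$ with $\nabla_n Z=0$, and get the radical property from $\bar H^{AB}\bar I_A\bar V_B=H^{AB}I_AV_B=I^BV_B=0$. The only difference is cosmetic: the paper sidesteps the tensor-product bookkeeping you flag by defining $\bar H^{AB}$ scalar-wise, through the functions $H^{AB}V_AW_B$ (constant along the fibres because all three factors are $\nabla_n$-parallel), and then checking $\bar\nabla\bar H=0$ by Leibniz against arbitrary $\bar V,\bar W$; your quotient-bundle formulation is equivalent once the connection of Proposition~\ref{Proposition: InducedConnectionOnT*Q} is extended to tensor powers, with well-definedness resting, as there, on the curvature contraction $n^a\Omega_{ab}{}^{A}{}_{B}=0$.

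Two points deserve attention. First, in part (3) you invoke Definition~\ref{AdaptedScales} and Lemma~\ref{Lemma: PropertiesAdaptedScales} for $\sigma$, but $\sigma$ vanishes on $\Sigma$, so it is an adapted \emph{scale} only on $\mathcal{O}$; as written, your splitting computation proves $\bar I_A=D_A\bar\sigma$ only over $\Q_\mathcal{O}$, which misses precisely the boundary $\mathcal{H}$ where the identity is needed. This is easily repaired: either observe that both sides are smooth sections of $(\T_\Q)^*$ agreeing on the dense open set $\Q_\mathcal{O}$, or, more directly, recall that the isomorphism of Proposition~\ref{Proposition: Isomorphism of tractors on Q/M} is \emph{defined} by $D_A\lambda\mapsto\pi^*D_A\bar\lambda$ on weight-one densities with $D_n\lambda=0$, so applying it to $\lambda=\sigma$ yields $\bar I_A=D_A\bar\sigma$ globally, by construction (this is presumably why the paper's own proof leaves item (3) implicit). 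Second, your linear algebra is correct and identifies the radical of $\bar H^{AB}$ with the single line spanned by $\bar I_A$; since $\T_\Q$ has rank $n+1$ (as $\dim\Q=n$), this gives corank one, i.e.\ rank $n$, so you should reconcile this count with the rank $n-1$ asserted in the statement rather than treat the stated number as the output of your argument --- the paper's proof does not address the rank at all, and the mismatch appears to be a dimension-convention issue worth double-checking.
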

\begin{proof}
\begin{enumerate}
\item Let $\tilde{H}^{AB}$ denote the induced metric in $I^\circ$, let $\bar{V}_A,\bar{W}_B$ be sections of $\T_{\Q}^*$ that we can pullback an identify with sections  $V_A,W_B$ of $I^\circ$ that satisfy, $\nabla_n V_A=\nabla_n W_B= 0.$ Since $H^{AB}$ is parallel it follows that $H^{AB}V_AW_B$ is constant along the fibres of $M \rightarrow \Q$; this can this be used as the definition of $\bar{H}^{AB}$. 
To show that $\bar{H}^{AB}$ is parallel observe that we have the scalar identity:
\[ \bar{\nabla}_a(\bar{H}^{AB}\bar{V}_A\bar{W}_B)=\bar{\nabla}_a(\bar{H}^{AB}\bar{V}_A\bar{W}_B) - \bar{H}^{AB}\bar{\nabla}_a\bar{V}_A\bar{W}_B - \bar{H}^{AB}\bar{V}_A\bar{\nabla}_a\bar{W}_B, \]
which, by definition of $\bar{\nabla}$ and $\bar{H}^{AB}$, must pull back to the identity:
\[ \nabla_a(H^{AB}V_AW_B)=\nabla_aH^{AB}V_AW_B - H^{AB}\nabla_aV_AW_B - H^{AB}V_A\nabla_aW_B=0. \]
In the above we have defined\footnote{We omit, by abuse of notation, the bundle isomorphism}: $V_A=\pi^*\bar{V}_A$ and $W_A=\pi^*\bar{W}_B$.
\item This follows from the fact that $\nabla_c D_A \sigma =0$ and $I^AD_A \sigma = \Phi_{AB}I^AI^B=0$. It is immediate from the definition of $\bar{\nabla}$ that $\bar{I}_A$ is parallel for $\bar{\nabla}$.
\end{enumerate}
\end{proof}

\begin{prop}\label{Proposition: normal tractor connection on Q}
The connection $\bar{\nabla}$ on $\T^*_{\Q}$ is the Cartan normal connection associated to the projective class of $[\bar{\nabla}^s]$ where $s$ is any adapted scale. 
\end{prop}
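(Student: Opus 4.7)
The plan is to verify in an adapted scale that the induced connection $\bar{\nabla}$ on $\T^*_{\Q}$ coincides explicitly with the normal projective tractor connection attached to $[\bar{\nabla}^s]$. Since the normal tractor connection is uniquely characterised by its formula in a choice of scale in terms of the Schouten tensor of that scale, it suffices to verify that $\bar{\nabla}$ reproduces the standard expression.

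Fix an adapted scale $s=\pi^*\bar{s}$, which exists by Remark~\ref{ScalesWithRho=0Exist}, and use it to simultaneously split $\T^*_M$ and $\T^*_{\Q}$ as in Figure~\ref{Figure: Compatible splittings on Q/M}. Let $\bar{V}_A=\bar{\xi}\bar{Y}_A+\bar{\mu}_c\bar{Z}^c_A$ be a section of $\T^*_{\Q}$. By Corollary~\ref{Proposition: explicit tractor isomorphism}, its pullback is
\[V_A = \xi Y_A+\mu_c Z^c_A \in \Gamma(I^\circ),\]
with $\xi=\pi^*\bar{\xi}$ and $\mu_c=\pi^*\bar{\mu}_c$ (and automatically $n^c\mu_c=0$). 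A direct application of the connection formulas~\eqref{TractorConnection} gives, in the scale $s$,
\[\nabla_b V_A = (\nabla_b\xi-\mu_b)\,Y_A+(\nabla_b\mu_c+\xi P_{bc})\,Z^c_A.\]

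Next, by construction of $\bar{\nabla}$ in Proposition~\ref{Proposition: InducedConnectionOnT*Q}, for a vector field $\bar{Z}$ on $\Q$ and any lift $Z$ with $\nabla_n Z=0$, one has $\pi^*(\bar{\nabla}_{\bar{Z}}\bar{V})_A=Z^b\nabla_b V_A$. I would then unpack each component: $Z^b\nabla_b\xi$ and $Z^b\mu_b$ descend to $\bar{Z}^b\bar{\nabla}^{\bar{s}}_b\bar{\xi}$ and $\bar{Z}^b\bar{\mu}_b$ by the way $\bar{\nabla}^{\bar{s}}$ was constructed in Proposition~\ref{Proposition: induced projective structure on Q}; the term $Z^b\nabla_b\mu_c$ (once properly contracted against lifts of cotangent frames) descends to $\bar{Z}^b\bar{\nabla}^{\bar{s}}_b\bar{\mu}_c$ by the same proposition; and, crucially, $Z^bP_{bc}\xi$ descends to $\bar{Z}^b\bar{P}_{bc}\bar{\xi}$ by Corollary~\ref{SchoutenTensorMagic}. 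Using that the splitting operators $Y_A,Z^c_A$ are identified with $\bar{Y}_A,\bar{Z}^c_A$ under the adapted-scale splitting, this yields
\[\bar{\nabla}_b\bar{V}_A=(\bar{\nabla}^{\bar{s}}_b\bar{\xi}-\bar{\mu}_b)\bar{Y}_A+(\bar{\nabla}^{\bar{s}}_b\bar{\mu}_c+\bar{\xi}\,\bar{P}_{bc})\bar{Z}^c_A,\]
which is exactly the formula for the normal projective tractor connection of $[\bar{\nabla}^{\bar{s}}]$ in the scale $\bar{s}$. By uniqueness of the normal Cartan connection, $\bar{\nabla}$ is the normal tractor connection on $\Q$.

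The main technical obstacle is the bookkeeping between adapted-scale pullbacks and horizontal lifts of vector fields: one must check that the lifts chosen in Proposition~\ref{Proposition: induced projective structure on Q} are compatible with the lift condition $\nabla_n Z=0$ used in Proposition~\ref{Proposition: InducedConnectionOnT*Q}, and that the identification of $\bar{P}$ with $P$ via Corollary~\ref{SchoutenTensorMagic} applies to the contraction $Z^bP_{bc}$ with a lifted frame. Once this is in place, the verification is purely algebraic. As an alternative characterisation one could invoke normality directly, by arguing that the induced curvature $\bar{\Omega}$ pulls back to a projection of $\Omega$ to $I^\circ$ and noting that the normality of $\Omega$ combined with Lemma~\ref{Lemma: n^aF_a =0} forces the Ricci-type trace of $\bar{\Omega}$ to vanish, but the explicit formula approach outlined above is the more direct route.
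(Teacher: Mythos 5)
Your proposal is correct and follows essentially the same route as the paper's own proof: fix an adapted scale, use the compatible simultaneous splittings of $\T^*_M$ and $\T^*_{\Q}$ from Corollary~\ref{Proposition: explicit tractor isomorphism}, compute $\nabla_Z V_A$ componentwise, and let Proposition~\ref{Proposition: induced projective structure on Q} together with Corollary~\ref{SchoutenTensorMagic} push each component down to $\Q$, recovering the standard formula for the normal tractor connection in the scale $\bar{s}$. The lift-compatibility worry you flag is harmless, since both Proposition~\ref{Proposition: induced projective structure on Q} and Proposition~\ref{Proposition: InducedConnectionOnT*Q} use the same condition $\nabla_n Z=0$ on lifts.
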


\begin{proof}
Choose a fixed adapted scale $s$ and introduce the splittings of $\T_M^*$ and $\T_Q^*$ as in Corollary~\ref{Proposition: explicit tractor isomorphism}.
Let $\bar{V}_A= \bar{\xi}\bar{Y}_A + \bar{\mu}_c\bar{Z}^c_A$ be a section of $\T_Q^*$, we have from the aforementioned Corollary that: \[V_A=\pi^*\bar{V}_A= \xi Y_A + \mu_c Z^c_A\] where $\xi = \pi^*\xi$ and $\mu=\pi^*\mu$. 
Let $\bar{Z}$ be a vector field on $\Q$ and $Z$ a lift to $M$ such that $\nabla_n Z=0$, then:
\[ \nabla_Z V_A= (\nabla_Z\xi -\mu(Z) )Y_A + (\nabla_Z \mu_c +P(Z,-)_c\xi)Z^c_C.\]
By definition of the connection $\bar{\nabla}$, the Leibniz rule and Corollary~\ref{SchoutenTensorMagic} we have:
\[ \begin{cases} \nabla_Z \xi - \mu(Z)= \pi^*\left(\bar{\nabla}_{\bar{Z}}\bar{\xi}-\bar{\mu}(\bar{Z})\right)\\(\nabla_Z \mu +P(Z,-)\xi)=
\pi^*\left(\nabla_{\bar{Z}}\bar{\mu} + \bar{P}(\bar{Z},-)\xi \right). \end{cases} \]
Hence, using again Corollary~\ref{Proposition: explicit tractor isomorphism}, it follows that:
\[\bar{\nabla}_c\bar{V}_A=(\bar{\nabla}_c \bar{\xi} - \bar{\mu}_c)\bar{Y}_A + (\bar{\nabla}_c\bar{\mu}_a+\bar{P}_{ca})Z^c_A.\]
\end{proof}

\subsubsection{Projective compactification of order 1}

It now follows from Proposition~\ref{Proposition: TractorGeometryofQ} and Proposition~\ref{Proposition: normal tractor connection on Q} that $\Q$ is equipped with a normal solution to the metrisability equation of rank $n-1$ whose geometry is therefore described by~\cite[Theorem 3.14]{Flood:2018aa}:

\begin{theo} \label{Theorem: Projective compactification order 1}
Let $g^{ab}=\bar{\sigma}^{-2}\bar{H}^{AB}\bar{Z}_A^b\bar{Z}_B^b$ on $\Q\cap \{\bar{\sigma}\neq 0\}$.
Then the restriction of $g_{ab}$ to $\Q\cap\{\pm \overline{\sigma} >0\}$ is projectively compact (Ricci-flat) Einstein metric of order $1$ and $\Q\cap\{\pm \overline{\sigma} \geq0\}$ is a projective compactification with projective boundary $\mathcal{H}=\Q\cap\{\bar{\sigma}=0\}$.
\end{theo}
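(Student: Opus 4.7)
The plan is to recognise that the data assembled on $\Q$ in the previous subsections is exactly what is needed to invoke the structural theorem \cite[Theorem~3.14]{Flood:2018aa}, which characterises projectively compact Ricci-flat Einstein metrics of order one as arising from normal, rank-degenerate solutions of the projective metrisability equation whose radical is generated by a parallel tractor of the form $D_A\bar\sigma$ for a weight-$1$ scale $\bar\sigma$.

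The verification proceeds in three short steps. First, by Proposition~\ref{Proposition: normal tractor connection on Q}, the induced tractor connection $\bar\nabla$ on $\T_\Q$ is the normal Cartan projective tractor connection associated with the projective class $[\bar\nabla^s]$ constructed in Proposition~\ref{Proposition: induced projective structure on Q}. Parallel sections for $\bar\nabla$ are therefore precisely the \emph{normal} solutions of the prolonged equations of projective tractor calculus. Second, Proposition~\ref{Proposition: TractorGeometryofQ} supplies exactly a normal, rank-degenerate solution $\bar H^{AB}$ of the metrisability equation on $\Q$, together with a non-vanishing parallel cotractor $\bar I_A = D_A \bar\sigma$ lying in (and, by the rank statement there, generating) its radical.

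Third, the algebraic lemma recalled at the end of Section~\ref{Projective Tractor Calculus} identifies the kernel of any such degenerate $\bar H^{AB}$ as the line spanned by $D_A\tau$ with $\tau^{2}\propto\detd(\bar\zeta)$, where $\bar\zeta^{ab}=\bar Z^a_A\bar Z^b_B\bar H^{AB}$ is the associated weighted bilinear form on $\Q$. Comparing with the identification $\bar I_A=D_A\bar\sigma$ provided by Proposition~\ref{Proposition: TractorGeometryofQ} forces $\tau$ to coincide with $\bar\sigma$ up to a locally constant sign, so that the degeneration locus of $\bar\zeta^{ab}$ coincides with $\{\bar\sigma=0\}=\mathcal{H}$ and $\bar\sigma$ is a genuine boundary defining density (as already recorded in Corollary~\ref{Corollary: Densities induced on Q}). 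Feeding this into \cite[Theorem~3.14]{Flood:2018aa} then yields at once that on each connected component of $\Q\cap\{\pm\bar\sigma>0\}$ the rescaled metric $g^{ab}=\bar\sigma^{-2}\bar H^{AB}\bar Z^a_A\bar Z^b_B$ is Einstein and Ricci-flat (the scalar curvature being proportional to $\det\bar H=0$), and that the closure $\Q\cap\{\pm\bar\sigma\geq 0\}$ is a projective compactification of order one with projective boundary $\mathcal{H}$.

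In this sense the theorem carries no genuine obstacle of its own: the substantive work has already been done in Propositions~\ref{Proposition: TractorGeometryofQ} and~\ref{Proposition: normal tractor connection on Q}, which together establish that $(\T_\Q,\bar\nabla,\bar H^{AB},\bar I_A)$ is a bona fide normal projective tractor geometry with the required holonomy reduction. The only mild care needed is the unwinding of conventions when quoting \cite[Theorem~3.14]{Flood:2018aa}, which is expressed in terms of the weighted cometric $\bar\zeta^{ab}$ rather than the tractor $\bar H^{AB}$; but this is purely a bookkeeping issue, handled by the splitting formulas established in Section~\ref{sssection: Tractors of Q}.
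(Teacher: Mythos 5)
Your proposal is correct and follows essentially the same route as the paper: the paper also deduces the theorem directly from Propositions~\ref{Proposition: TractorGeometryofQ} and~\ref{Proposition: normal tractor connection on Q} (normal induced tractor connection plus the parallel degenerate $\bar{H}^{AB}$ with $\bar{I}_A=D_A\bar{\sigma}$ in its radical) and then cites \cite[Theorem~3.14]{Flood:2018aa}. Your extra step identifying the radical with $D_A\bar{\sigma}$ via the algebraic lemma is a harmless elaboration of what the paper leaves implicit (cf.\ Corollary~\ref{Corollary: Densities induced on Q}).
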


\subsection{An additional induced tractor bundle on $\Q$}\label{AdditionalTractorBundle}
The isomorphisms in Proposition~\ref{Proposition: Isomorphism of tractors on Q/M} are convenient for the classical description of the projective geometry on $\Q$. However, the infinitesimal geometric structure on $\Q$ is \emph{richer} than that of the usual projective tractor geometry.
We will first illustrate this by constructing from the ambient projective tractor bundle $\T_M \to M$ a vector bundle $\tilde{\T} \to \Q$ on $\Q$,  whose fibres of dimension $n+2$ will be able to carry additional information compared to $\T_{\Q} \to \Q$ (which is a rank $n+1$ vector bundle). This might at first seem incidental, but it turns out to be important in elucidating the geometry at the projective boundary $\mathcal{H} \subset \Q$. 

The bundle $\widetilde{\T}$ inherits a connection and parallel sections $\tilde{I}^A$, $\tilde{H}^{AB}$, mimicking the original setup on $M$ seen here, however, as a structure on $\Q$. A natural question one might ask is: does this vector bundle correspond to the tractor bundle of a Cartan geometry on $\Q$ and what, if any, additional geometric structures may it induce?

The answer we propose is that this is in fact an alternative form of \emph{projective} geometry on $\Q$, however, based on a \emph{non-effective} homogeneous model. 

Referring to the model of our holonomy reduction discussed in the Introduction and Section~\ref{model}, one might observe there that the base $\R P^n$ arises equivalently as either the set of planes containing the line generated by $I$ or, the set of projective lines passing through the fixed point $[I]$. It can then be described as the Klein geometry $\tilde{G}/\tilde{P}$ where $\tilde{G}$ is the isotropy subgroup of $[I]$ in $PGL(n+1)$ and $\tilde{P}$ the stabiliser of some given plane containing $\R I$. It is clear that the action has a relatively large kernel conjugate to: 
\[ \tilde{K}=\left\{ \begin{pmatrix} c  & \upsilon \\ 0 & \lambda I_{n+1} \end{pmatrix} \!\!\!\mod Z(GL(n+2))\right\}.\]

Given any Cartan geometry\footnote{$C$ is $\tilde{P}$-principal bundle.} $(C \to \Q, \omega)$ with this model, it is natural to construct a bundle $\mathscr{T}$ associated to the restriction to $\tilde{P}$ of the $(n+2)$-dimensional representation $u \mapsto \lvert\det u\rvert^{-\frac{1}{n+2}}u$ of $PGL(n+1,\R)$. There is also a canonical line bundle $\mathscr{L} \rightarrow \Q$ associated to the 1-dimensional representation of $\tilde{P}$ given by:
\[ \begin{pmatrix} c& \upsilon_\nu & \upsilon_0 \\ 0 & A^\mu_\nu & 0 \\ 0 & \Upsilon_\mu & a \end{pmatrix} \!\!\!\mod Z(GL(n+2)) \mapsto \left\lvert\frac{c}{a}\right\rvert^\frac{1}{n+2}. \]
As with projective densities, let $\mathscr{L}(w)$ denote the $w$-th power of $\mathscr{L}$. For simplicity, in this brief discussion, we shall assume that the line bundle corresponding to the representation $G\in \tilde{P} \mapsto \frac{c}{a}$ is oriented and can therefore be identified with a power of the first.
A generic $\tilde{\mathfrak{g}}$-valued 1-form on $\Q$
can be parametrised as follows:
\begin{equation}\label{GeneralParametrisationConnection} \begin{pmatrix}-\frac{1}{n+2}\alpha^\sigma_\sigma + \frac{n+1}{n+2}\kappa & \upsilon_\nu& \upsilon_0\\ 0 & \alpha^\mu_\nu - \frac{1}{n+2}\alpha^\sigma_\sigma \delta^\mu_\nu -\frac{1}{n+2}\kappa\delta^\mu_\nu & \omega^\mu \\ 0 & P_\nu & -\frac{1}{n+2}\alpha^\sigma_\sigma -\frac{1}{n+2}\kappa \end{pmatrix}. \end{equation}
The 1-form $\kappa$ parametrises a linear connection on $\mathscr{L}$ with local connection form $\frac{1}{n+2}\kappa$. Modulo the kernel $\mathfrak{k}$, this leads to the usual parametrisation of a projective connection and a natural condition on $\omega$ is to require that $\tilde{\omega}=\omega \!\!\!\mod \mathfrak{k}$ is the normal Cartan connection of the corresponding projective class. We make this assumption in the rest of the paragraph, thus $P_\nu$ is completely determined in terms of the Projective Schouten tensor.

Let us now make a few further general observations about these Cartan geometries. First, we see that there is always a distinguished direction in $\mathscr{T}$; this should be identified with the origin $[I]$ in the definition of the non-effective homogenous model. Secondly, we have a natural exact sequence realising $\mathscr{T}$ as an extension of a weighted version of the usual tractor bundle $\T_Q$:
\begin{center}
\begin{tikzcd} 0\arrow[r] & \E_Q(-\frac{n+1}{n+2})\otimes \mathscr{L}(n+1) \arrow[r] & \mathscr{T} \arrow[r] & \T_Q(\frac{1}{n+2})\otimes \mathscr{L}(-1) \arrow[r] &0.  \end{tikzcd}
\end{center}

Now, if we assume that there is a distinguished \emph{parallel} section $\tilde{I}$ of $\mathscr{T}$ in the privileged direction  there are local gauges in which $\kappa= \frac{1}{n+1}\alpha^\mu_\mu$ and it is apparent that we should identify $\mathscr{L}$ with $\mathcal{E}(\frac{1}{n+2})$. The short exact sequence then becomes:
\begin{figure}[h!]
\begin{center}
\begin{tikzcd} 0\arrow[r] & C^{\infty}(\Q) \arrow[r, "\tilde{I}"] & \mathscr{T} \arrow[r, "\Pi"] & \T_{\Q} \arrow[r] &0  \end{tikzcd}
\end{center}
\caption{Decomposition of $\mathscr{T}$\label{Decomposition_Tscr}\protect\footnotemark}
\end{figure}
\footnotetext{We will abusively use the notation $C^\infty(\Q)$ to denote both the module of smooth real-valued functions on $\Q$ and the (trivial) bundle of which they are sections. }

Under the above compatibility assumption we therefore see that parallel sections of $\mathscr{T}$ canonically project to parallel sections of $\T_\Q$; so a holonomy reduction of the Cartan geometry $(C,\omega)$ given by parallel sections of $\mathscr{T}$ leads to a holonomy reduction of the underlying normal projective geometry. For instance, if we assume that there is a non-degenerate parallel section $\tilde{H}^{\tilde{A}\tilde{B}}$ of signature $(2,n)$ such that $\tilde{\Phi}_{\tilde{A}\tilde{B}}\tilde{I}^{\tilde{A}}\tilde{I}^{\tilde{B}}=0$, where $\tilde{\Phi}$ is the inverse of $\tilde{H}$, then we get a parallel section: $H^{AB}=\Pi_{\tilde{A}}^A\Pi_{\tilde{B}}^B\tilde{H}^{\tilde{A}\tilde{B}}$. Additionally, the section: $\Pi^B_{\tilde{B}}\tilde{H}^{\tilde{A}\tilde{B}}$ induces an isomorphism $\mathcal{T}_\Q^* \simeq \tilde{I}^\perp$. Since $\tilde{I} \in \tilde{I}^\perp$, it induces a cotractor $I_A$ such that $H^{AB}I_A=0$, as expected.

There are few unresolved issues in pushing the general discussion further. For instance, the general connection in Eq.~\eqref{GeneralParametrisationConnection} has unspecified components $\upsilon_0,\upsilon_\nu$ that do not seem immediately linked to the underlying normal projective structure. However, one may expect that the existence of a parallel section $H^{\tilde{A}\tilde{B}}$ could exhaust these remaining degrees of freedom. Indeed, an argument in favour of this is that there will be an induced Ricci-flat Einstein metric $g$ on $\Q$ and, where $g$ is non-degenerate, we will be able to find local gauges of the form:
\[\begin{pmatrix}0& -\omega^\mu\eta_{\mu\nu}& 0\\ 0 & \alpha^\mu_\nu & \omega^\mu \\ 0 & 0 &0\end{pmatrix},\] 
where $\alpha$ is the Levi-Civita connection form expressed in an orthogonal basis and $\eta=\textrm{diag}(-1,1,\dots,1)$. 

Another related issue is that it is not clear how to split the exact sequence in Figure~\ref{Decomposition_Tscr}. From the strict of point of view of the bundle $\mathscr{T}$, this is a choice of a point in the affine space $\mathfrak{a}=\{ T_{\tilde{A}} \in \mathscr{T}^* \, \lvert \, T_{\tilde{A}}\tilde{I}^{\tilde{A}}=1\}$, directed by sections of $\tilde{I}^{\circ} \simeq\T_Q^*$. However, it is unclear how one may interpret this in terms of a geometric structure on $\Q$. 
Supposing a choice can be made, if we denote $\rotpi$ the inverse induced splitting section, then we note that it follows directly from Eq.~\eqref{GeneralParametrisationConnection} that the connection will act on $T^{\tilde{A}}= f \tilde{I}^{\tilde{A}}+ \rotpi^{\tilde{A}}_A t^A$ ($f\in C^{\infty}(\Q), t\in \Gamma(\T_{\Q})$) according to:
\begin{equation}\label{Desiredformofconnection} \nabla_c T^{\tilde{A}}= (\nabla_c f + \upsilon_{cB} t^B)\tilde{I}^{\tilde{A}} + \rotpi^{\tilde{A}}_A \nabla_c t^A, \end{equation}
where $\upsilon_{cA}$ is a co-tractor valued $1$-form determined by the connection and depending on the splitting.

In this section we construct a vector bundle $\widetilde{\T}\rightarrow \Q$ that we conjecture carries the structure of $\mathscr{T}$. In the case we study, the distinguished section $\tilde{I}$ is parallel and we will show that $\widetilde{\T}$ has the same decomposition structure as in Figure~\ref{Decomposition_Tscr}.

The bundle $\widetilde{\T}\rightarrow \Q$ is constructed as follows: if $\nu$ is a density on $\widetilde{M}$ that restricts to a nowhere vanishing density on $M$, the flow of $N^a =\nu n^a$ defines a canonical action of $\R$ on the tractor bundle $\pi_{\T}: \mathcal{T}_{M} \rightarrow M$ that parallel transports tractors along the integral curve of $N$, $t\mapsto \phi^N_t(x)$.
The quotient $\pi_{\T_{M}\to \tilde{\T}}: \mathcal{T}_{M} \longrightarrow \widetilde{\mathcal{T}}$ is a rank $(n+2)$ vector bundle $\widetilde{\T} \to \Q$, with projection defined by factorisation of the map $\pi_{\tilde{\T}}$ (see Figure~\ref{DefinitionTauTilde}); this construction is independent of the choice of scale $\nu$. This leads to:
\begin{defi}
Define $\tilde{\T}$ to be the quotient space $\T/\R$ constructed as above.  It naturally fits into the commutative square:
\begin{figure}[H]
\centering
\begin{tikzcd}
\T_{M} \arrow[r, "\pi_{\T_M\to \tilde{\T}}" ] \arrow[d,"\pi_{\T}"] & \tilde{\T} \arrow[d, "\pi_{\tilde{\T}}"]\\
M \arrow[r,"\pi"] & \Q
\end{tikzcd}
\caption{\label{DefinitionTauTilde}Defining diagram of the bundle $\tilde{\T} \rightarrow \Q$.}
\end{figure}
\end{defi}

A point in the fibre $\pi_{\tilde{\T}}^{-1}(\{q\})$ can be thought of as a parallel vector field along the integral curve $q\in \Q$ of $N$ or, in other words, a covariantly constant section along $\pi^{-1}(\{q\})$. From Figure~\ref{DefinitionTauTilde}, it can be seen that sections of $\mathcal{T}_M$ which are covariantly constant along the fibres of $M \to \Q$ map to sections of $\tilde{\T}$. 

Conversely, a section $\tilde{V}$ of $\tilde{\T}$ gives rise to such a section of $\T_M$. Indeed, we can define a map on $M=\coprod_{q\in \Q} \pi^{-1}(\{q\})$, that to each $x\in \pi^{-1}(\{q\})$ gives the value of the section $V(q)$ at $x$. Using the maps $q,t$ of a local trivialisation $\psi: \pi^{-1}(U_i) \rightarrow U_i\times \R$, locally this coincides with $V(x)=\bar{V}(q(x))(t(x))$ where for each $q\in U_i\subset \Q$, $\bar{V}(q)$ is the solution of the ordinary differential equation,
\[\begin{cases} \nabla_{\dot{\gamma}} \bar{V}(t)=0, \\ \bar{V}(0)=\tilde{V}(q)(\psi^{-1}(q,0)), \\ \gamma(t)=\psi^{-1}(q,t)=\phi^{\tilde{N}}_t(x_0), \textrm{where $x_0=\phi^{-1}(q,0)$}. \end{cases}\]
Above, $\tilde{N}$ is the image of the vector field $\partial_t$ on $U_i \times \R_t$ under the diffeomorphism $\psi$; the section is smooth by smoothness with respect to initial conditions.
We record this observation in the following lemma.
\begin{lemm}\label{correspondence_sections_T_tilde}
Sections of $\widetilde{\T}$ are in one-to-one correspondence with sections of $\T$ that are covariantly constant along the fibres of $M\longrightarrow \Q$.
\end{lemm}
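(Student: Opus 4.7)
The plan is to leverage the defining quotient structure of $\widetilde{\T}$: by construction each fibre $\pi_{\widetilde{\T}}^{-1}(\{q\})$ is the orbit space of the $\R$-action that parallel-transports tractors along the integral curve $\pi^{-1}(\{q\})$ of $N$, and its elements are therefore in canonical bijection with parallel sections of $\T_M$ along that integral curve. I would first establish this fibrewise identification carefully, verifying in particular that the quotient (and hence the bijection) is independent of the choice of scale $\nu$ used to define $N$, since changing $\nu$ only reparametrises the flow without modifying its orbits or the tractor parallel transport along them.

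Next, I would construct the forward map. Given a section $V \in \Gamma(\T_M)$ satisfying $\nabla_N V = 0$, define $\widetilde{V}(q) := \pi_{\T_M \to \widetilde{\T}}(V(x))$ for an arbitrary $x \in \pi^{-1}(\{q\})$. Well-definedness is immediate from the fibrewise identification, since $V|_{\pi^{-1}(\{q\})}$ is a single parallel section and any two points of $\pi^{-1}(\{q\})$ lie on the same $\R$-orbit; smoothness descends from that of $V$ together with the smoothness of the quotient map $\pi_{\T_M \to \widetilde{\T}}$.

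For the inverse, I would work locally using a trivialization $\psi: \pi^{-1}(U) \to U \times \R$ of $M \to \Q$ whose vertical vector field is a positive multiple of $N$, as provided on $\mathcal{O}$ by Theorem \ref{TheoFibreBundleInside BIS} and postulated in general by our standing assumption that $M \to \Q$ is a fibre bundle with fibre $\R$. Given $\widetilde{V} \in \Gamma(\widetilde{\T})$, I would pick at each $q \in U$ a lift of $\widetilde{V}(q)$ to $\T_M|_{\psi^{-1}(q,0)}$ and propagate it along the flow of $N$ by parallel transport, thereby defining a section $V$ on $\pi^{-1}(U)$ that is covariantly constant along the fibres by construction. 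Independence of the choice of lift follows from the fact that different lifts differ by the action of some fixed $t \in \R$, which is absorbed by a parameter shift of parallel transport and so yields the same section on the integral curve.

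The main technical obstacle will be smoothness of the reconstructed section. This will follow from the standard theorem on smooth dependence of ODE solutions on initial data and parameters, applied to the first-order linear parallel transport equation along the integral curves of $N$ in a local trivialization of $\T_M$. Independence from the trivialization on overlaps of the $U$'s reduces to the same $\R$-orbit ambiguity, and the verification that the two maps are mutually inverse is then immediate from uniqueness of parallel transport along the integral curves.
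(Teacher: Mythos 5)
Your proposal is correct and follows essentially the same route as the paper: the forward map comes directly from the defining quotient diagram of $\widetilde{\T}$ (each fibre being identified with parallel tractor fields along the corresponding integral curve of $N$), and the inverse is reconstructed in a local trivialisation $\psi:\pi^{-1}(U)\to U\times\R$ by solving the parallel transport ODE from the $t=0$ slice, with smoothness obtained from smooth dependence of solutions on initial data. The extra checks you mention (independence of the scale $\nu$, consistency on overlaps) are sensible refinements of the same argument rather than a different approach.
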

As claimed we see that the bundle comes equipped with a natural set of geometric data.
\begin{prop}
 $\widetilde{\T}$ inherits from $\T$ the follow data:
\begin{enumerate}
\item\label{connection_tilde} a natural linear connection defined by the relation:
\[ \widetilde{\nabla_{\bar{X}}T}_p= (\nabla_X \widetilde{T})_p, \]
where $p\in M$, $\bar{X}$ is a vector field on $Q$ near $q=\pi(p)$, $T\in \Gamma(\widetilde{\T})$, $X$ is lift of $\bar{X}$ near $p$ that satisfies $\nabla_n X=0$, and $~\widetilde{}$ denotes the correspondence of Lemma~\ref{correspondence_sections_T_tilde},
\item canonical sections $\tilde{H}^{AB}$ and $\tilde{I}^A$ of $\tilde{\mathcal{T}}\otimes\tilde{\mathcal{T}}$ and  $\tilde{\mathcal{T}}$.
\end{enumerate}
Furthermore, the sections $\tilde{I}$ and $\tilde{H}$ are parallel for the connection defined in~\ref{connection_tilde}.
\end{prop}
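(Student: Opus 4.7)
The plan is to verify that the formula $\widetilde{\nabla_{\bar X}T}_p=(\nabla_X \widetilde T)_p$ is well-posed, after which the canonical sections and their parallelism will follow almost immediately. Well-posedness requires two checks: (a) the right-hand side is independent of the lift $X$ of $\bar X$ (among those satisfying $\nabla_n X=0$), and (b) $\nabla_X \widetilde T$ is itself covariantly constant along the fibres of $\pi$, hence corresponds via Lemma~\ref{correspondence_sections_T_tilde} to a bona fide section of $\widetilde\T$.

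The first step is to fix an adapted scale (which exists by Remark~\ref{ScalesWithRho=0Exist}), so that $N=\nu n$ becomes an honest parallel vector field on $M$: from $\nabla_c \nu=0$ and $\nabla_c n^a=0$ (Lemma~\ref{Lemma: PropertiesAdaptedScales}) we obtain $\nabla_c N^a=0$. In this scale the condition $\nabla_n X=0$ becomes $\nabla_N X=0$. Check (a) is then routine: any two admissible lifts $X,X'$ of $\bar X$ differ by a vertical field $Y=fN$, and $\nabla_Y\widetilde T=f\,\nabla_N \widetilde T=0$ by fibre-constancy of $\widetilde T$, so $\nabla_X\widetilde T=\nabla_{X'}\widetilde T$.

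For (b), I would apply the tractor curvature identity
\[ \nabla_N\nabla_X\widetilde T-\nabla_X\nabla_N\widetilde T-\nabla_{[N,X]}\widetilde T=\Omega(N,X)\widetilde T. \]
The middle term on the left vanishes because $\nabla_N\widetilde T=0$ by construction of $\widetilde T$. Since $\nabla$ is torsion-free, $[N,X]=\nabla_N X-\nabla_X N=0$: the first piece is zero by hypothesis on the lift, while the second is zero because $\nabla N=0$. The right-hand side vanishes by Lemma~\ref{Lemma: n^aF_a =0}, which supplies exactly $n^a\Omega_{ab}{}^C{}_D=0$. Therefore $\nabla_N(\nabla_X\widetilde T)=0$, establishing (b) and hence the well-definedness of the induced connection on $\widetilde\T$.

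For the canonical sections, $I^A$ and $H^{AB}$ are parallel on all of $M$ by~\eqref{Holonomy reduction conditions DH=0, DI=0}, and are in particular fibre-constant; they therefore descend via Lemma~\ref{correspondence_sections_T_tilde} and its tensor-power analogue (obtained by applying the same quotient construction to $\T_M \otimes \T_M$) to the announced sections $\tilde I^A\in\Gamma(\widetilde\T)$ and $\tilde H^{AB}\in\Gamma(\widetilde\T\otimes\widetilde\T)$. Their parallelism for the induced connection is then immediate from the defining equation: $\widetilde{\nabla_{\bar X}\tilde I}_p=(\nabla_X I)_p=0$ and likewise $\widetilde{\nabla_{\bar X}\tilde H}_p=(\nabla_X H)_p=0$. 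The substantive step in this outline is (b); it rests crucially on the identity $n^a\Omega_{ab}{}^C{}_D=0$, which is precisely the geometric input coming from the holonomy reduction that makes the ambient tractor connection descend compatibly to the quotient $\Q$.
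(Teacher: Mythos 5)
Your proof is correct and follows essentially the same route as the paper, which simply refers back to the proof of Proposition~\ref{Proposition: InducedConnectionOnT*Q}: adapted scale, lift-independence, the curvature commutator identity combined with $[N,X]=0$ and Lemma~\ref{Lemma: n^aF_a =0} to get fibre-constancy of $\nabla_X\widetilde T$, then descent of the parallel sections $I^A$, $H^{AB}$ with their parallelism immediate from the definition. You merely spell out the details the paper leaves implicit.
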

\begin{proof} 

\begin{enumerate}\item The construction is similar to that in the proof of Proposition~\ref{Proposition: InducedConnectionOnT*Q}.  \item Since $\nabla_a H^{AB}=0$ and $\nabla_a I^A=0$ these both descend to corresponding sections $\tilde{H}^{AB}$ and $\tilde{I}^A$.\end{enumerate}  That they are parallel follows immediately from the definition of the connection.\end{proof}

To develop further the idea that this data should be thought of as an intrinsic Cartan geometry on $\Q$ modeled on $\tilde{G}/\tilde{P}$, we now confirm that we have the expected decomposition sequence of Figure~\ref{Decomposition_Tscr} realising it as an extension of the standard projective tractor bundle.
\begin{prop}\label{Proposition: tractor isomorphism} There are canonical isomorphisms: 
\begin{align}
	\tilde{I}^{\circ} &\simeq (\T_{\Q})^*, & \tilde{\mathcal{T}}/\tilde{I} &\simeq \T_{\Q}.
\end{align}
In particular, we have the decomposition sequence:
\begin{center}
\begin{tikzcd}
0 \arrow[r] & \T_{\Q}^* \simeq \tilde{I}^{\circ} \arrow[r] &\widetilde{\T}^* \arrow[r, "\tilde{I}"]& C^{\infty}(\Q) \arrow[r] & 0
\end{tikzcd}
\end{center}
\end{prop}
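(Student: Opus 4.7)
The plan is to build both isomorphisms fibre-by-fibre, combining the parallel-transport identification built into the definition of $\widetilde{\T}$ with the bundle isomorphism $\pi^{*}\T_{\Q}^{*}\simeq I^{\circ}\subset \T_M^{*}$ of Proposition~\ref{Proposition: Isomorphism of tractors on Q/M}. Fix $q\in \Q$ and $p\in \pi^{-1}(\{q\})$. The construction of $\widetilde{\T}$ provides a canonical linear isomorphism $\Psi_p:\widetilde{\T}_q \to \T_M|_p$, given by evaluating at $p$ the unique covariantly constant section along $\pi^{-1}(\{q\})$ representing a point of $\widetilde{\T}_q$. Since $\tilde{I}$ descends from the parallel tractor $I^A$, one has $\Psi_p(\tilde{I}_q)=I^A|_p$, and $\Psi_p$ therefore restricts/descends to give candidate linear maps
\[
 \widetilde{\T}_q/\langle \tilde{I}_q\rangle \;\xrightarrow{\sim}\; \T_M|_p/\langle I^A|_p\rangle\simeq \T_{\Q}|_q, \qquad \tilde{I}^{\circ}|_q \;\xrightarrow{\sim}\; I^{\circ}|_p\simeq \T_{\Q}^{*}|_q.
\]

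The decisive step is to check independence of the choice of $p$ on the integral curve through $q$, i.e. that these candidate maps assemble into bundle isomorphisms. For two points $p,p'\in \pi^{-1}(\{q\})$ on the same curve, parallel transport along $N$ between their fibres of $\T_M$ intertwines the two $\Psi$'s by construction, and since $I^A$ is parallel it preserves both the line $\langle I^A\rangle$ and its annihilator $I^{\circ}$. What remains is to verify that the identifications $\T_M/I\simeq \pi^{*}\T_{\Q}$ and $I^{\circ}\simeq \pi^{*}\T_{\Q}^{*}$ of Proposition~\ref{Proposition: Isomorphism of tractors on Q/M} intertwine this same parallel transport with the identity on the fibre over $q$. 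This follows from Corollary~\ref{Proposition: explicit tractor isomorphism}: in an adapted scale, a section of $I^{\circ}$ of the form $\pi^{*}\bar V$ has components $\xi=\pi^{*}\bar\xi$, $\mu_a=\pi^{*}\bar\mu_a$, which are $N$-invariant by construction; equivalently, by Proposition~\ref{Proposition: InducedConnectionOnT*Q} such sections are parallel along $N$ for the induced connection on $I^{\circ}$, so the fibrewise identification is $N$-equivariant.

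Having established both isomorphisms, the exact sequence is obtained by dualising the tautological sequence $0 \to \langle \tilde{I}\rangle \to \widetilde{\T} \to \widetilde{\T}/\tilde{I} \to 0$, identifying $\langle \tilde{I}\rangle$ with the trivial line bundle $C^{\infty}(\Q)$ via the nowhere-vanishing section $\tilde{I}$, the quotient $\widetilde{\T}/\tilde{I}$ with $\T_{\Q}$ by the first isomorphism, and the annihilator $\tilde{I}^{\circ}\subset\widetilde{\T}^{*}$ with $\T_{\Q}^{*}$ by the second. The main obstacle is the $N$-invariance verification in the middle paragraph; once this is in place the rest is formal bookkeeping, so essentially the proposition is the statement that the identification of Proposition~\ref{Proposition: Isomorphism of tractors on Q/M} is compatible with the flow of $N$ and hence descends from $M$ to the quotient $\Q$.
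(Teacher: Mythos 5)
Your proof is correct and follows essentially the route the paper intends: its own proof is just a pointer to Proposition~\ref{Proposition: Isomorphism of tractors on Q/M}, and your argument fleshes out exactly that analogy, using the fibrewise covariantly-constant-section description of $\widetilde{\T}$ (Lemma~\ref{correspondence_sections_T_tilde}) together with the identifications and the $\nabla_n$-constancy of pullback sections established in Corollary~\ref{Proposition: explicit tractor isomorphism} and Proposition~\ref{Proposition: InducedConnectionOnT*Q}. The explicit check of $N$-equivariance of the identification along the fibres, which the paper leaves implicit, is precisely the right point to verify, so no gap remains.
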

\begin{proof}
We refer the reader to the proof of Proposition~\ref{Proposition: Isomorphism of tractors on Q/M} as the arguments are similar.
\end{proof}
As in the discussion of the model, one can split the exact sequence with a choice of section of $\widetilde{\T}^*$ that does not annihilate the distinguished tractor $\tilde{I}$; in fact on $\Q_{\mathcal{O}}$, we have a canonical choice:
\begin{lemm}
The cotractor $L_C=\frac{1}{2}D_C(\sigma^{-1}\lambda)$ descends to a section $\tilde{L}_{\tilde{C}}$ of $\widetilde{\T}^*$ such that $\tilde{L}_{\tilde{C}}\tilde{I}^{\tilde{C}}=1$.
\end{lemm}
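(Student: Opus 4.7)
The plan is to derive a closed-form expression for $L_C$ and then verify the two required properties directly. Using the Leibniz rule satisfied by the Thomas $D$-operator on densities together with the identities $D_A\sigma = \Phi_{AB}I^B$ and $D_A\lambda = 2\Phi_{AB}X^B$ from Lemma~\ref{Lemma: Relations ISigma, XLambda, etc} (and $D_A\sigma^{-1} = -\sigma^{-2}D_A\sigma$, obtained by applying Leibniz to $\sigma\cdot\sigma^{-1}=1$), one immediately obtains on $\mathcal{O}$ the explicit form
\begin{equation*}
L_C \;=\; \frac{1}{\sigma}\,\Phi_{CB}X^B \;-\; \frac{\lambda}{2\sigma^{2}}\,\Phi_{CB}I^B.
\end{equation*}
The pairing condition $\tilde{L}_{\tilde C}\tilde{I}^{\tilde C}=1$ then reduces, via the identification of Proposition~\ref{Proposition: tractor isomorphism}, to the pointwise identity $I^CL_C=1$: the first summand contracts to $\sigma^{-1}\sigma=1$ by the very definition $\sigma=\Phi_{AB}I^AX^B$, while the second vanishes by the nullity $\Phi_{AB}I^AI^B=0$ of $I$.

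The substantial content is to prove that $L_C$ descends, i.e.\ defines a well-defined section of $\widetilde{\T}^*$. Dualising the correspondence of Lemma~\ref{correspondence_sections_T_tilde}, sections of $\widetilde{\T}^*$ are in bijection with cotractors $L_C$ on $M$ satisfying $\nabla_n L_C=0$, so this is precisely what must be checked. My preferred route is a direct calculation in a generic scale: differentiating the explicit form of $L_C$, using $\nabla_c\Phi_{AB}=0$, $\nabla_cI^B=0$, $\nabla_cX^B=W^B_c$ and then contracting with $n^c$ via the relations $n^cW^B_c = I^B-\rho X^B$, $n^c\nabla_c\sigma=-\rho\sigma$, $n^c\nabla_c\lambda = 2\sigma - 2\lambda\rho$ provided by~\eqref{IParallel} and Lemma~\ref{CovariantDerivativesDirectionn}. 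A short bookkeeping shows that all $\rho$-dependent contributions cancel pairwise and that the two remaining terms proportional to $\sigma^{-1}\Phi_{CB}I^B$ carry opposite signs, yielding $n^c\nabla_cL_C=0$.

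The only real obstacle is this last cancellation; it is elementary but requires care with signs. A conceptually cleaner alternative is to work in an adapted scale (Definition~\ref{AdaptedScales}, existence guaranteed by Remark~\ref{ScalesWithRho=0Exist}), where $\rho=0$, $\nabla_a n^b=0$, and $P_{ab}n^b=0$ with $P$ symmetric. In such a scale, the general identity $\nabla_c(D_Af) = (\nabla_c\nabla_af + fP_{ca})Z^a_A$ for weight-one densities (derived directly from \eqref{TractorConnection}) applied to $f=\sigma^{-1}\lambda$ reduces the problem to showing $\nabla_a\bigl(n^c\nabla_c(\sigma^{-1}\lambda)\bigr)=0$; but Lemma~\ref{CovariantDerivativesDirectionn} in an adapted scale gives $n^c\nabla_c(\sigma^{-1}\lambda)=2$, a constant, so its gradient vanishes. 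Either approach delivers $\nabla_n L_C=0$, and the resulting descended cotractor $\tilde{L}_{\tilde C}$ satisfies the required normalisation by the pairing computation above.
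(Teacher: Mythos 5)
Your proposal is correct; I checked both of your computations and they go through. Your primary route --- expanding $L_C=\sigma^{-1}\Phi_{CB}X^B-\tfrac{\lambda}{2\sigma^{2}}\Phi_{CB}I^B$ via the Leibniz rule and the identities of Lemma~\ref{Lemma: Relations ISigma, XLambda, etc}, then contracting $\nabla_c L_C$ with $n^c$ in a generic scale --- is in substance the paper's own argument carried out in components: the paper instead evaluates $n^d\nabla_d L_C=\tfrac12 I^D D_D D_C(\sigma^{-1}\lambda)$ invariantly, using $D_DD_C\sigma=0$, $D_DD_C\lambda=2\Phi_{DC}$, $I^DD_D\sigma=0$ and $I^DD_D\lambda=2\sigma$, which are exactly the facts your bookkeeping with $\rho$, $n^cW^B_c=I^B-\rho X^B$ and Lemma~\ref{CovariantDerivativesDirectionn} unpacks; the $\rho$-terms do cancel pairwise as you claim. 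Your adapted-scale alternative is genuinely different and arguably cleaner: with $\rho=0$, $\nabla_a n^b=0$, $P_{ab}n^b=0$ (and, since an adapted scale is special, $\beta_{ab}=0$, so second covariant derivatives of densities commute --- this is the one step you should state explicitly when replacing $n^c\nabla_c\nabla_a f$ by $\nabla_a\bigl(n^c\nabla_c f\bigr)$), everything reduces to the constancy $n^c\nabla_c(\sigma^{-1}\lambda)=2$, i.e.\ Eq.~\eqref{sigma scale => N^a nabla_a l= 2}; note that $\sigma$ itself is such an adapted scale on $\mathcal{O}$, so existence is immediate there. Two trivial points: your dualisation of Lemma~\ref{correspondence_sections_T_tilde} (sections of $\widetilde{\T}^*$ correspond to cotractors with $\nabla_n L_C=0$) is exactly how the paper uses that lemma, so there is no gap; and for the normalisation $\tilde{L}_{\tilde{C}}\tilde{I}^{\tilde{C}}=1$ the relevant identification is again that correspondence (the pairing of fibrewise-parallel sections is constant along the fibres and is computed pointwise upstairs), rather than Proposition~\ref{Proposition: tractor isomorphism}, which concerns $\tilde{I}^{\circ}$ and $\tilde{\T}/\tilde{I}$.
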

\begin{proof}
Using Lemma~\ref{Lemma: Relations ISigma, XLambda, etc}:
\[\begin{aligned} n^d\nabla_d L_C=\frac{1}{2}I^DD_DD_C(\sigma^{-1}\lambda)
&=\frac{1}{2}I^D\left(2\sigma^{-3}\lambda D_DD_C\sigma -2\sigma^{-2}D_{(D}\lambda D_{C)} \sigma +\sigma^{-1}D_DD_C\lambda  \right) \\
&= - \sigma^{-2}\frac{1}{2}I^DD_D\lambda D_C\sigma + \sigma^{-1}I^D\Phi_{DC}\\
&=  -\sigma^{-1}D_C\sigma + \sigma^{-1}D_C\sigma =0.
\end{aligned}\]
The first claim then follows from Lemma~\ref{correspondence_sections_T_tilde} and a straightforward computation, using again Lemma~\ref{Lemma: Relations ISigma, XLambda, etc}, shows the second.
\end{proof}

This provides us with the means to test the following statements on $\Q_\mathcal{O}$: 
\begin{coro} \phantom{p}\begin{enumerate}\item The bundle $\widetilde{\T}$ with its connection $\tilde{\nabla}$ and parallel tractor $\tilde{I}$, endows $\Q$ with a Cartan geometry modeled on the non-homogenous model of projective geometry $\tilde{G}/\tilde{P}$. \label{conj1}
 \item The linear connection on $\T_{\Q}^*$ induced by the isomorphism $\T_{\Q}^* \simeq \tilde{I}^{\circ}$ coincides with the normal projective connection $\bar{\nabla}$.
\end{enumerate}
\end{coro}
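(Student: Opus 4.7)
My plan is to address the two statements separately: statement (2) reduces to a direct comparison of the definitions of the two connections, whereas statement (1) requires constructing a principal bundle and verifying that $\tilde{\nabla}$ arises from a Cartan connection on it. Both statements are to be tested on $\Q_\mathcal{O}$, where the preceding lemma provides the distinguished splitting $\tilde{L}_{\tilde{C}}$ of the exact sequence in Proposition~\ref{Proposition: tractor isomorphism}.

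For statement (2), I would start with a section $\bar{V}_A$ of $\T_\Q^*$, which by Proposition~\ref{Proposition: Isomorphism of tractors on Q/M} corresponds to a section $V_A=\pi^*\bar{V}_A$ of $I^\circ\subset\T_M^*$. Working in an adapted scale and applying Corollary~\ref{Proposition: explicit tractor isomorphism} together with Lemma~\ref{Lemma: PropertiesAdaptedScales}, this $V_A$ is covariantly constant along the fibres of $M\to\Q$, so by the analogue of Lemma~\ref{correspondence_sections_T_tilde} for $\widetilde{\T}^*$ and the identification $\tilde{I}^\circ\simeq\T_\Q^*$ of Proposition~\ref{Proposition: tractor isomorphism}, it corresponds to a section $\tilde{V}$ of $\tilde{I}^\circ\subset\widetilde{\T}^*$. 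The defining formula of $\bar{\nabla}$ in Proposition~\ref{Proposition: InducedConnectionOnT*Q}, namely $\pi^*\bar{\nabla}_{\bar{X}}\bar{V}_A=\nabla_X V_A$ for any lift $X$ of $\bar{X}$ satisfying $\nabla_n X=0$, is literally the same formula (dualised) that defines the connection on $\widetilde{\T}^*$ induced by $\tilde{\nabla}$. The equality of the two connections on $\T_\Q^*$ is therefore immediate once one traces through the isomorphisms.

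For statement (1), the approach is to construct a principal $\tilde{P}$-bundle $C\to\Q_\mathcal{O}$ as the bundle of adapted frames $(e_0,e_1,\dots,e_{n+1})$ of $\widetilde{\T}$, modulo the action of the centre, for which $e_0\propto\tilde{I}$ and $\tilde{H}^{AB}$ takes a fixed normal form of signature $(2,n)$ with $e_0$ spanning its radical. By the description of $\tilde{P}$ given in the preamble, the resulting structure group is precisely (a conjugate of) $\tilde{P}$. The distinguished splitting $\tilde{L}_{\tilde{C}}$ provides a canonical refinement of these frames on $\Q_\mathcal{O}$, and the candidate Cartan connection $\omega$ on $C$ is obtained by reading off the connection form of $\tilde{\nabla}$ in such frames, together with the soldering part extracted from the projection $\widetilde{\T}/\tilde{I}\simeq\T_\Q$ of Proposition~\ref{Proposition: tractor isomorphism}.

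The hard part will be verifying that $\omega$ is genuinely a Cartan connection, that is, a pointwise linear isomorphism onto $\tilde{\mathfrak{g}}$. The parallelism of $\tilde{I}$ and $\tilde{H}^{AB}$ together with statement (2) guarantees that $\omega$ takes values in $\tilde{\mathfrak{g}}$ with most of the block structure of Eq.~\eqref{GeneralParametrisationConnection} correctly fixed by the projective structure on $\Q$. Controlling the remaining off-diagonal entries $\upsilon_0$ and $\upsilon_\nu$ in Eq.~\eqref{GeneralParametrisationConnection}, and establishing non-degeneracy of the soldering part, is the main obstacle; I expect this to proceed by using the parallel metric tractor $\tilde{H}^{AB}$ to exhaust the remaining degrees of freedom, as already suggested in the discussion preceding the statement, and by relating $C$ to a suitable subquotient of the ambient Cartan bundle $P\to\widetilde{M}$ of $[\nabla]$ via descent along the flow of $n^a$.
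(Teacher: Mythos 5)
Your treatment of part (2) is essentially correct and coincides with what the paper does implicitly: both $\bar{\nabla}$ of Proposition~\ref{Proposition: InducedConnectionOnT*Q} and the connection induced on $\tilde{I}^{\circ}\simeq \T_{\Q}^*$ from $\tilde{\nabla}$ are defined by the same recipe (lift $\bar X$ with $\nabla_n X=0$ and differentiate the fibrewise-parallel representative), so they agree once the isomorphisms of Proposition~\ref{Proposition: tractor isomorphism} and Corollary~\ref{Proposition: explicit tractor isomorphism} are traced through, and Proposition~\ref{Proposition: normal tractor connection on Q} identifies the result with the normal projective connection.

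For part (1), however, there is a genuine gap, and it is precisely the step you yourself flag: you never verify that your candidate $\omega$ is a Cartan connection, you only ``expect'' the parallel $\tilde H$ to exhaust the remaining freedom. The paper (whose argument is itself only a sketch, on $\Q_{\mathcal{O}}$) does not build a frame bundle at all: since $\tilde I$ is parallel, relative to the canonical splitting section $\tilde{L}_{\tilde C}=\tfrac12 D_C(\sigma^{-1}\lambda)$ of the preceding lemma the connection $\tilde\nabla$ automatically takes the form \eqref{Desiredformofconnection}, whose $\T_{\Q}$-block is, by part (2), the \emph{normal} projective tractor connection; the soldering form, its non-degeneracy and the absolute-parallelism condition are thus supplied by the normal projective Cartan geometry of $\Q$ already established in Proposition~\ref{Proposition: normal tractor connection on Q}, and the only remaining datum in \eqref{GeneralParametrisationConnection} is the cotractor-valued one-form $\upsilon$, which is then computed explicitly from $\tilde{\nabla}_b\tilde{L}_{\tilde C}$ in the scale $\sigma$, giving $\upsilon_{aB}=-\bar{\sigma}\,h_{ab}\bar{Z}^b_B$. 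Your plan defers exactly this determination of $\upsilon_0,\upsilon_\nu$ and of the soldering non-degeneracy to an unproven expectation, which is the substance of the claim rather than a technicality, and you make no use of $\tilde L$ beyond calling it a ``refinement'' of the frames. Two further inaccuracies in your setup: $\tilde{H}^{\tilde A\tilde B}$ is non-degenerate on $\widetilde{\T}$ (what holds is $\tilde{\Phi}_{\tilde A\tilde B}\tilde{I}^{\tilde A}\tilde{I}^{\tilde B}=0$; the degenerate object, with $D_A\bar\sigma$ in its radical, is the descended pairing on $\T_{\Q}^*$), so requiring ``$e_0$ spanning its radical'' is not a meaningful normalisation; and frames putting $\tilde H$ into a fixed normal form have structure group strictly smaller than $\tilde P$ — that reduction encodes the holonomy reduction, not the $\tilde P$-bundle of the claimed Cartan geometry, so you would additionally have to extend the structure group and check that your $\omega$ extends equivariantly.
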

\begin{proof}
We only sketch the proof of~\ref{conj1} on $\Q_{\mathcal{O}}$. Since $\tilde{I}$ is parallel it follows from its definition that the connection $\tilde{\nabla}$ will adopt the desired form given by Eq.~\eqref{Desiredformofconnection}, where the connection on $\mathcal{\T}_Q$ is induced from $\tilde{\nabla}$ by the isomorphism of Proposition~\ref{Proposition: tractor isomorphism}. In particular, we may compute the connection cotractor 1-form $\upsilon$ from the $\tilde{\nabla}_b\tilde{L}_{\tilde{C}}$.
Working in the scale determined by $\sigma$ on $\mathcal{O}$, it can be determined from:
$\frac{1}{2}W^B_bD_BD_C(\sigma^{-1}\lambda)=\frac{1}{2}\sigma^{-1}W^B_b\nabla_a\nabla_b \lambda Z^a_A=\sigma g_{ab}Z^b_B$, in the notation of Section~\ref{section: Geometry of the open orbit O}.
Pushing down to $\mathcal{Q}_{\mathcal{O}}$ through the different identifications of this and the previous sections, we arrive at:
\[\upsilon_{aB}= -\bar{\sigma} h_{ab}\bar{Z}^b_B,\]
where $h$ is the metric of Proposition~\ref{Proposition: invertible metric on Q_O}.
\end{proof}
In this situation, where the geometry arises as an inherited structure from an ambient space, there is an obvious interpretation of the additional degrees of freedom in the non-effective Cartan geometry as the remnants of the projective degree of freedom of the projective geometry on the ambient space; however it is still unclear how they should be interpreted in terms of $\Q$ alone.

It should also be noted that Proposition~\ref{Proposition: tractor isomorphism} does not rely on the existence of the parallel tractor $\tilde{H}$ on $\widetilde{\T}$. Taking it into account adds a further interesting element to the geometric picture surrounding of $\widetilde{\T}$; relating it to conformal geometry. By Proposition \ref{Proposition: invertible metric on Q_O}, $\Q_{\mathcal{O}} := \Q\cap \{\sigma \neq0 \}$ is equipped with a conformal class of metrics $[h^{ab}]$ and therefore defines a \emph{conformal} tractor bundle.
\begin{prop}\label{Proposition: ambient tractors extend conformal} On $\Q_{\mathcal{O}}$ the bundle $\tilde{\mathcal{T}}$ is canonically isomorphic to the conformal tractor bundle. What is more the induced connection $\tilde{\nabla}$ coincides with the normal conformal tractor connection.
\end{prop}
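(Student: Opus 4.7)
The plan is to construct a canonical bundle isomorphism $\Psi : \tilde{\mathcal{T}}|_{\Q_{\mathcal{O}}} \to \mathcal{T}_{c}$ onto the standard conformal tractor bundle of $(\Q_{\mathcal{O}}, [h])$, intertwining $(\tilde{H}^{AB}, \tilde{I}^{A}, \tilde{\nabla})$ with the conformal tractor metric, the scale tractor of $h$, and the normal conformal tractor connection respectively. This should work because $h$ is Ricci flat by Proposition~\ref{Proposition: einstein eq sur h}, so its conformal scale tractor $\mathbb{I}$ is parallel and null for the conformal tractor metric; the pair $(\tilde{H},\tilde{I})$ on the rank $n+2$ bundle $\tilde{\mathcal{T}}$, with $\tilde{H}$ of signature $(2,n)$ and $\tilde{I}$ null, carries precisely the same type of parallel data.

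First I would construct $\Psi$ through compatible splittings. Working in the scale $\bar{\sigma}$ on $\Q_{\mathcal{O}}$, the cotractor $\tilde{L}$ of the preceding lemma, which satisfies $\tilde{L}_{\tilde{C}}\tilde{I}^{\tilde{C}}=1$, yields a splitting of $\tilde{\mathcal{T}}^{*}$ relative to $\tilde{I}$, and via Proposition~\ref{Proposition: tractor isomorphism} a direct-sum decomposition of $\tilde{\mathcal{T}}$ into the line spanned by $\tilde{I}$, a copy of $T\Q_{\mathcal{O}}$, and a complementary line. On the conformal side, the scale $h$ splits $\mathcal{T}_{c} \simeq \mathcal{E}_{c}[1] \oplus T\Q_{\mathcal{O}}[-1] \oplus \mathcal{E}_{c}[-1]$ with $\mathbb{I}$ in the top slot. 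Sending $\tilde{I}$ to $\mathbb{I}$ and acting as the identity on the $T\Q_{\mathcal{O}}$ factor determines $\Psi$ uniquely upon requiring $\Psi^{*}h_{c} = \tilde{H}$; the density conventions are reconciled by the very formula $h^{ab} = \bar{\sigma}^{-2}\bar{H}^{AB}\bar{Z}_{A}^{a}\bar{Z}_{B}^{b}$, which exhibits the projective density $\bar{\sigma}$ as the Einstein scale of $h$. The metric compatibility is then automatic: the middle--middle pairing is $h^{ab}$ by construction, the top--bottom pairing is normalised by $\tilde{L}_{\tilde{C}}\tilde{I}^{\tilde{C}}=1$, and the remaining entries vanish because $\tilde{I}$ is null and lies in the radical of $\tilde{H}$ on $\tilde{I}^{\circ}$.

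The delicate step is to show that $\Psi_{*}\tilde{\nabla}$ coincides with the normal conformal tractor connection. My preferred route is via uniqueness of the normal Cartan connection: $\tilde{\nabla}$ already preserves $\tilde{H}$ and $\tilde{I}$, so $\Psi_{*}\tilde{\nabla}$ preserves both the conformal tractor metric and the canonical filtration. It remains to identify, slot by slot, the induced connection on the $T\Q_{\mathcal{O}}$-factor with the Levi-Civita connection of $h$ — which is the content of Proposition~\ref{Proposition: einstein eq sur h} — and to match the off-diagonal 1-forms with their conformal counterparts. The off-diagonal form coupling $T\Q_{\mathcal{O}}$ to the $\tilde{I}$ line is the cotractor-valued 1-form $\upsilon_{aB} = -\bar{\sigma}\,h_{ab}\bar{Z}_{B}^{b}$ computed in the preceding corollary, which under $\Psi$ is precisely the metric-lowering term of the conformal tractor connection in the scale $h$. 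The off-diagonal piece carrying the Schouten tensor vanishes on both sides: the conformal Rho-tensor of $h$ is zero because $h$ is Ricci flat, while the projective Schouten tensor on $M$ vanishes in the scale $\bar{\sigma}$ by (the quotient of) Proposition~\ref{Proposition: einstein eq sur g}. Normality of the resulting connection follows from Lemma~\ref{Lemma: n^aF_a =0} together with the normality of the ambient projective Cartan connection. The principal obstacle throughout is bookkeeping — tracking density weights, splitting identifications, and sign conventions across the projective--conformal translation so as to confirm that the two connections agree slot by slot.
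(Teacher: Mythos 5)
Your proposal is essentially correct, but it takes a genuinely different route from the paper. The paper does not build the isomorphism by hand: it observes that, by Theorem~\ref{TheoFibreBundleInside BIS}, the zero set $\{l=0\}=\mathcal{O}\cap\{\lambda=0\}$ is a canonical global section of $\mathcal{O}\to\Q_{\mathcal{O}}$, so that $\tilde{\T}$ (whose fibres are the covariantly constant tractors along the fibres) is identified with $i^*\T_{M}$ along that hypersurface together with its induced connection; it then recognises $\mathcal{O}\cap\{\lambda=0\}$ as (part of) the conformal infinity of the order-$2$ projective compactification attached to $\lambda$, with boundary conformal structure equal to $[h]$, and invokes \cite[Section 4]{cap_projective_2016-1} to conclude that the restricted projective tractor bundle and connection are canonically the conformal tractor bundle with its normal connection. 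Your argument instead re-proves this special case directly: you split $\tilde{\T}$ using the canonical data $\tilde{L}$ and $\bar{\sigma}$, split the conformal tractor bundle in the Ricci-flat scale $h$, and match the connection components, using $\upsilon_{aB}=-\bar{\sigma}h_{ab}\bar{Z}^b_B$, the vanishing of both the conformal Rho-tensor of $h$ and the projective Schouten tensor in the $\bar{\sigma}$-scale, and the identification of the induced connection on the $\T_{\Q}$-quotient with the normal projective connection $\bar{\nabla}$. What the paper's route buys is brevity, automatic canonicity and normality, and a conceptual placement within the Čap--Gover boundary theory; what yours buys is a self-contained, explicit verification that does not rely on the external result. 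Two points in your write-up deserve tightening: (i) the slot identification is not literally ``the identity on the $T\Q_{\mathcal{O}}$ factor'', since projective densities on the $n$-manifold $\Q$ (normalised by $n+1$) and conformal densities (normalised by $n$) differ, so the map must include explicit powers of $\bar{\sigma}$ — harmless for canonicity because $\bar{\sigma}$ is itself canonical, but it is exactly the bookkeeping you flag and it must be done; (ii) the closing appeal to Lemma~\ref{Lemma: n^aF_a =0} for normality is superfluous and slightly misplaced: preserving the tractor metric and filtration does not by itself force normality, whereas once your slot-by-slot computation shows the connection takes the standard form of the conformal tractor connection in the scale $h$ (with vanishing Rho), it \emph{is} the normal connection by definition, so the slot matching is where the entire content lies.
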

\begin{proof}
By Theorem \ref{TheoFibreBundleInside BIS}, the line bundle $\mathcal{O} \to \Q_{\mathcal{O}}$ has a preferred section $s : \Q_{\mathcal{O}} \to \mathcal{O}\cap\{\lambda=0\}$. This map thus canonically identifies $\tilde{\mathcal{T}}$ with the pull back of the projective tractor bundle $i^*\mathcal{T}_{\mathcal{O}}\to \mathcal{O}\cap\{\lambda=0\}$ as well as the corresponding induced connections.
Now we recall that the submanifold $\mathcal{O}\cap\{\lambda=0\}$ can also be thought as part of the conformal boundary for a projective compactification of order $2$.  What is more the corresponding conformal structure coincides with the conformal structure on $\Q_{\mathcal{O}}$ (this follows from their respective definitions). However, by the results from \cite[Section 4]{cap_projective_2016-1}, the pullback bundle $i^*\mathcal{T}_{\mathcal{O}}$  of projective tractors along the projective boundary is canonically identified with the conformal tractor bundle and the restriction of the projective tractor connection coincides with the normal conformal tractor connection.
\end{proof}
In other terms $\tilde{\T}$ (and its induced connection $\tilde{\nabla}$) are continuous extensions of the conformal tractor bundle (and the conformal tractor connection) from $\Q_{\mathcal{O}}$ to the whole of the projective compactification $\Q$.

The points we develop in this section are the first steps towards the desirable picture in which this structure can be constructed from geometrical data on $\Q$ alone; they suggest an interesting picture and interplay between projective and conformal geometry that we hope to unravel in future work~\cite{BorthwickHerfray2}.

\section{The geometry of $\Sigma \to \mathcal{H}$}\label{section: The geometry of Sigma to H}

\subsection{The geometry of $\Sigma$}\label{ssection: the geometry of Sigma}

The hypersurface $\Sigma \subset M$ is the projective boundary of $\mathcal{O}$ given by a holonomy reduction from $\text{PGL}(n+1)$ to $\text{Aff}(n+1)$: this follows from $\nabla_c I_A=0$ and $I_{A} = D_A\sigma$ (cf Lemma \ref{Lemma: Relations ISigma, XLambda, etc}).

We have the following short sequences, dual to that in~\cite[Corollary 3.15]{Flood:2018aa},
\begin{prop}\mbox{}\label{Proposition: short sequences of tractors on Sigma}
\begin{center}
	\begin{tikzcd}
		&0\arrow[d] & 0\arrow[d]\\&\mathcal{E}_\Sigma(-1)\arrow[r,"\simeq"] \arrow[d,hook,"X"]& i^*\E(-1)\equiv\E(-1)|_\Sigma \arrow[d,hook,"X"] \\ 0 \arrow[r]& \T_\Sigma  \simeq i^* (I_A)^{\circ} \arrow[r,hook] \arrow[d,two heads, "Z"] & i^*\T_{M} \arrow[r,two heads,"D_A\sigma"] \arrow[d,two heads, "Z"]& C^\infty(\Sigma)\arrow[d,"\simeq"] \arrow[r]& 0
		\\0\arrow[r]& T\Sigma(-1)\arrow[d]\arrow[r,hook]& i^*TM(-1)\arrow[d] \arrow[r,"\nabla_a \sigma", two heads]& C^\infty(\Sigma)\arrow[r] & 0\\&0&0
	\end{tikzcd}
\end{center}
\end{prop}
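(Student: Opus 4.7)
The strategy is to derive these sequences from the $1$-jet description of the tractor bundle, $\T_M^*\simeq J^1\E_M(1)$ (and analogously on $\Sigma$), together with the key fact, already established in the excerpt, that $\sigma$ is a weight-$1$ defining density for $\Sigma$ whose canonical $1$-jet $I_A=D_A\sigma$ is parallel and nowhere vanishing on $\Sigma$. This makes the whole proposition the tractor-dual counterpart of \cite[Corollary~3.15]{Flood:2018aa}, so concretely I would either dualise Flood--Gover's argument directly or rerun it in the tractor picture.

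The heart of the proof is the middle row $0\to \T_\Sigma\to i^*\T_M\to C^\infty(\Sigma)\to 0$. Once one has the top isomorphism $\E_\Sigma(-1)\simeq i^*\E_M(-1)$, which is the standard identification of boundary density bundles coming from the order-$1$ projective compactification structure provided by $\sigma$ (cf.~\cite{Cap:2014aa,Flood:2018aa}), restriction of weight-$1$ densities from $M$ to $\Sigma$ induces a surjective bundle map
\[ i^*J^1\E_M(1)\twoheadrightarrow J^1\E_\Sigma(1), \]
whose kernel at a point $x\in\Sigma$ consists of $1$-jets of sections vanishing to first order along $\Sigma$. By construction this kernel is the line generated by $D_A\sigma|_x=I_A|_x$; dualising then yields simultaneously the isomorphism $\T_\Sigma\simeq i^*(I_A)^\circ$ and the surjection $i^*\T_M\twoheadrightarrow C^\infty(\Sigma)$, $T^A\mapsto I_AT^A$.

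With this in hand, the left and middle columns are the canonical short exact sequences $0\to\E(-1)\to\T\to TM(-1)\to 0$ on $\Sigma$ and on $M$ (the latter pulled back to $\Sigma$), while the bottom row is the standard defining sequence of the tangent bundle of a regular hypersurface: because $\sigma$ is a weight-$1$ defining density, $\nabla_a\sigma$ is a nowhere-vanishing section of $T^*M(1)|_\Sigma$, so pairing with $TM(-1)$ provides a surjection onto $C^\infty(\Sigma)$ with kernel $T\Sigma(-1)$. Commutativity and exactness of the resulting $3\times 3$ diagram reduce to an elementary check in a compatible scale: writing $T^A=\tau X^A+\eta^aW^A_a$ and $I_A=\sigma Y_A+\nabla_a\sigma\, Z^a_A$, one reads off $I_AT^A=\eta^a\nabla_a\sigma$ on $\Sigma$, which matches the bottom-row map on $Z^a_A$-components and vanishes on the $X^A$-component, as required. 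The main obstacle I anticipate is pinning down the top identification $\E_\Sigma(-1)\simeq i^*\E_M(-1)$ naturally enough for the whole diagram to commute canonically, since density bundles on $\Sigma$ and on $M$ are a priori defined through different frame bundles; it is the specific order-$1$ projective compactification structure -- via which $\sigma$ trivialises the weighted conormal bundle of $\Sigma$ -- that supplies the correct matching of ambient and intrinsic weights and makes all the sequences fit together.
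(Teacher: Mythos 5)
Your argument is correct and follows essentially the same route as the paper: the paper's proof is precisely the observation that $\sigma$ being a defining density gives $J^1\mathcal{E}_\Sigma(1)=i^*(J^1\mathcal{E}_M(1))/J^1\sigma$, i.e.\ $(\T_\Sigma)^*=i^*(\T_M)^*/I_A$, and dualising yields $\T_\Sigma\simeq i^*(I_A)^\circ$ with the surjection $T^A\mapsto T^AD_A\sigma$. Your additional verification of the columns, the bottom row, and commutativity in a scale is a more detailed rendering of the same argument rather than a different one.
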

\begin{proof}
Since $\sigma$ is boundary defining function for $\Sigma$ we have $J^1\mathcal{E}_{\Sigma}(1) = i^*(J^1\mathcal{E}_{M}(1))/ J^1\sigma$ or equivalently $(\T_{\Sigma})^* = i^*(\T_{M})^* / I_A$. Passing to the dual gives $\T_{\Sigma} = i^*\big(I_A\big)^{\circ} \subset i^*(\T_{M})$.
\end{proof}

Recall that we denote by $\Phi_{AB}$ the inverse of $H^{AB}$. Since we have $\nabla_c \Phi_{AB}=0$, $\nabla_c I^A=0$  and $I^2=0$ on $M$ it follows from the isomorphisms in Proposition~\ref{Proposition: short sequences of tractors on Sigma} that we have
\begin{prop}\label{Proposition: Induced tractor geometry on Sigma}
The tractor bundle $\T_{\Sigma}$ of $\Sigma\subset M$ inherits from $M$ a projective tractor connection $\nabla_c$, a covariantly constant section $I^A$ and a covariantly constant pairing $\overline{\Phi}_{AB}$ with one dimensional kernel such that $\overline{\Phi}_{AB} I^B=0$.
\end{prop}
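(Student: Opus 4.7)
The plan is to transport the ambient structures to $\Sigma$ using the identification $\T_{\Sigma} \simeq i^*(I_A)^{\circ} \subset i^*\T_{M}$ from Proposition \ref{Proposition: short sequences of tractors on Sigma}, so that most of the technical work is already done and only three short checks remain, one for each claimed item.

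First, for the distinguished section: by Lemma \ref{Lemma: Relations ISigma, XLambda, etc}, $I_A = D_A\sigma = \Phi_{AB}I^B$, whence $I^A I_A = \Phi_{AB}I^A I^B = I^2 = 0$. Thus $I^A$ lies in $(I_A)^{\circ}$ on all of $M$, and its restriction to $\Sigma$ provides a canonical section of $\T_{\Sigma}$ under the identification above.

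Second, for the pairing, I would restrict $\Phi_{AB}$ as a symmetric bilinear form to the subbundle $(I_A)^{\circ} \subset \T_M$ and then pull back to $\Sigma$. The key point is to identify the radical: a section $V^A \in (I_A)^{\circ}$ lies in the radical precisely when the covector $\Phi_{AB}V^A$ annihilates all of $(I_A)^{\circ}$, hence must be proportional to $I_A$. Using non-degeneracy of $\Phi_{AB}$ on $\T_M$ together with $I_A = \Phi_{AB}I^B$, this forces $V^A = \mu I^A$, so the radical is exactly the line generated by $I^A$. In particular $\Phi_{AB}I^B = I_A$ vanishes by definition upon restriction to $(I_A)^{\circ}$, so $\Phi_{AB}I^B = 0$ when viewed as a covector on $\T_{\Sigma}$, as required.

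Third, for the connection: since $\nabla_c\Phi_{AB} = 0$ and $\nabla_c I^B = 0$, the cotractor $I_A = \Phi_{AB}I^B$ is itself parallel on $M$, so the ambient tractor connection preserves the subbundle $(I_A)^{\circ} \subset \T_M$. Restricting differentiation to directions tangent to $\Sigma$ then yields a standard pullback linear connection on $i^*(I_A)^{\circ} = \T_{\Sigma}$: for $X^c \in T\Sigma$ and $V \in \Gamma(\T_{\Sigma})$ locally extended to a section $\tilde{V}$ of $(I_A)^{\circ}$ in a neighbourhood in $M$, set $\nabla_X V := \nabla_{i_*X}\tilde{V}$, independence of the extension being the usual argument. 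Covariant constancy of $I^A$ and $\Phi_{AB}$ on $\T_\Sigma$ then follows at once from their parallelism on $M$ and the definition of the induced connection.

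The one subtle point, if one wishes to justify calling the induced connection a \emph{projective} tractor connection rather than simply a linear connection on the projective tractor bundle, is to check that it coincides with the normal projective tractor connection associated to the projective class induced on $\Sigma$ by the holonomy reduction. This is the only potential obstacle in the argument, and it is handled by the general theory of curved orbit decompositions~\cite{cap_holonomy_2014} applied to the reduction $\mathrm{PGL}(n+1) \to \mathrm{Aff}(n+1)$ cut out by $I_A$, in the spirit of the treatment of Flood--Gover~\cite{Flood:2018aa} for projective boundaries of projectively compact Einstein manifolds of order one.
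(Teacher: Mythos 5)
Your three checks are exactly the paper's (one-line) argument: the identification $\T_\Sigma\simeq i^*(I_A)^{\circ}$ of Proposition~\ref{Proposition: short sequences of tractors on Sigma}, the fact that $I^AI_A=\Phi_{AB}I^AI^B=I^2=0$ places $I^A$ in $(I_A)^{\circ}$, the identification of the radical of the restricted pairing with the line spanned by $I^A$ via non-degeneracy of $\Phi_{AB}$ and $I_A=\Phi_{AB}I^B$, and the restriction of the ambient connection to $i^*(I_A)^{\circ}$ using $\nabla_c\Phi_{AB}=0$, $\nabla_cI^A=0$. Up to that point the proposal is correct and follows the same route as the paper.

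Your final paragraph, however, contains a genuine error. You propose to justify the name \enquote{projective tractor connection} by arguing that the induced connection coincides with the \emph{normal} projective tractor connection of the projective class induced on $\Sigma$, and that this follows from the general curved orbit theory. The paper states the opposite in the remark immediately following this proposition: in contrast with the quotient $\Q$ (Proposition~\ref{Proposition: normal tractor connection on Q}), the connection induced on the boundary is generically \emph{not} normal, so the statement you are appealing to is false in general and the cited machinery does not deliver it. It is also unnecessary for the proposition as stated: \enquote{projective tractor connection} here requires only a linear connection on $\T_\Sigma$ compatible with the canonical filtration and soldering, and this is automatic from the restriction construction — note that $I_AX^A=D_A\sigma\, X^A=\sigma=0$ along $\Sigma$, so the line $X^A\E(-1)$ lies in $i^*(I_A)^{\circ}$, and $\nabla_cX^A=W^A_c$ restricted to directions tangent to $\Sigma$ provides the required non-degeneracy. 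Deleting the last paragraph (or replacing it with this observation) leaves a complete and correct proof of the statement actually claimed.
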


\begin{rema}
We point out that contrary to what we observed in Proposition~\ref{Proposition: normal tractor connection on Q} for the quotient $\Q$,  the projective tractor connection $\nabla_c$ induced on the boundary, is generically \emph{not} normal (as a projective connection).
\end{rema}
The geometry induced on $\Sigma$ is far richer than just a projective geometry. Let us here restrict to $\Sigma^{\pm} \subset M^{\pm}$ where $\lambda$ is nowhere vanishing  and thus defines a preferred scale\footnote{The Cartan geometry induced on $M^0$ is similar to that of null infinity, the conformal boundary of an asymptotically flat spacetime. This has been studied in details in \cite{herfray_asymptotic_2020,herfray_tractor_2022} and can be thought as a conformal version of the geometries induced on $M^{\pm}$, see \cite{Herfray:2021qmp}.}. In this case, the induced geometries are known as Carrollian geometries \cite{duval_carroll_2014}:
\begin{theo}\label{Theorem: induced geometry on Sigma}
	The hypersurface $\Sigma^{\pm} \subset M$ is naturally equipped with
	\begin{itemize}
		\item a vector field $N^a$
		\item a degenerate metric $q_{ab}$ of rank $n-1$ satisfying $q_{ab}N^b=0$, $\mathcal{L}_{N}q_{ab}=0$,
		\item a compatible affine connection $\nabla_c$ satisfying $\nabla_cN^a=0$, and $\nabla_c q_{ab}=0$.
	\end{itemize}
On $\Sigma^{\pm}$, the signature of $q_{ab}$ is $(0, \mp, + ... +)$.
\end{theo}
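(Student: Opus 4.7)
The preferred scale on $\Sigma^\pm$ is determined by $\mu := |\lambda|^{1/2}$, a nowhere vanishing weight-$1$ density on a neighborhood of $\Sigma^\pm$ (since $\lambda\neq 0$ there). In this $\mu$-scale $\nabla_c\lambda = 0$, so Lemma~\ref{Lemma: Relations ISigma, XLambda, etc} gives $\Phi_{AB}X^B = \tfrac{1}{2}D_A\lambda = \lambda Y_A$, i.e.\ $Y_A = \lambda^{-1}\Phi_{AB}X^B$. Contracting with $H^{AB}$ yields $H^{AB}Y_B = \lambda^{-1}X^A$, and reading off this identity against the standard decomposition of $H^{AB}$ forces the off-diagonal $W$-$X$ coefficient to vanish and the $X^AX^B$-coefficient to equal $\lambda^{-1}$:
\begin{equation*}
H^{AB} = \zeta^{ab}W^A_aW^B_b + \lambda^{-1}X^AX^B.
\end{equation*}
Combined with $\nabla_cH^{AB}=0$ and \eqref{DH=0}, this implies $\nabla_c\zeta^{ab}=0$ on $M^\pm$ (and hence $\nabla_c\zeta_{ab}=0$ for the inverse).

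Next I would build the Carrollian data. Define $N^a := \mu n^a$, $g_{ab} := \mu^{-2}\zeta_{ab}$, and $q_{ab} := i^*g_{ab}$ with $i:\Sigma^\pm\hookrightarrow M^\pm$. Extracting the $W^A$-component of $H^{AB}I_B = I^A$ from $I_B = \sigma Y_B + (\nabla_b\sigma)Z_B^b$ gives the identity $\zeta^{ab}\nabla_b\sigma = n^a$ on $M^\pm$, which shows that $n^a$ is the $\zeta$-dual of the conormal $d\sigma$. Hence on $\Sigma$, $n^a$ is $\zeta$-null (since $n^b\nabla_b\sigma = -\rho\sigma = 0$ by Lemma~\ref{CovariantDerivativesDirectionn}) and $\zeta$-orthogonal to $T\Sigma$, and in particular $\mathrm{span}(N)$ is the kernel of $q_{ab}$, so $q$ has rank $n-1$. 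To produce the compatible connection I show that $\Sigma^\pm$ is totally geodesic for $\nabla$: from $\nabla_cI_A=0$ and the above form of $I_A$ one derives $\nabla_a\nabla_b\sigma = -\sigma P_{ab}$, which vanishes on $\Sigma$. The ambient $\nabla$ therefore restricts to a torsion-free linear connection on $T\Sigma^\pm$; by construction this restriction annihilates $q$ (pullback of $\nabla g_{ab}=0$) and $N$ (since $\nabla_cn^a = -\rho\delta^a_c$ with $\rho = Y_AI^A = \lambda^{-1}\sigma$ vanishing on $\Sigma$). The identity $\mathcal{L}_Nq_{ab}=0$ then follows from $\nabla q = 0$, $\nabla N = 0$, and torsion-freeness.

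For the signature, the $1$-dimensional $\lambda^{-1}X^AX^B$ block contributes the sign of $\lambda$, so requiring $H^{AB}$ to have signature $(2,n)$ forces $\zeta^{ab}$ to have signature $(2,n-1)$ on $M^+$ and $(1,n)$ on $M^-$. On $TM|_\Sigma$ the vector $n^a$ is $\zeta$-null and $\zeta$-orthogonal to $T\Sigma$, so choosing a null transverse vector $\ell$ with $\zeta(n,\ell)=1$ yields a hyperbolic plane $\mathrm{span}(n,\ell)$ of signature $(1,1)$; subtracting it from the signature of $\zeta$ leaves $(1,n-2)$ on $\Sigma^+$ and $(0,n-1)$ on $\Sigma^-$ for the non-degenerate part of $q_{ab}$ on $T\Sigma/\mathrm{span}(N)$. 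Combined with the kernel direction $N$, this reproduces the claimed signature $(0,\mp,+,\ldots,+)$. The main technical step is the initial simplification of $H^{AB}$ in the $\mu$-scale; from there the rest follows from the ambient parallelism identities and a careful restriction to the totally-geodesic hypersurface $\Sigma^\pm$.
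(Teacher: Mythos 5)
Your proposal is correct and takes essentially the same route as the paper: work in the preferred scale determined by $\lambda$ (equivalently $|\lambda|^{1/2}$), use the parallel tractors to put $H^{AB}$ in the normal form $\zeta^{ab}W^A_aW^B_b+\lambda^{-1}X^AX^B$ so that the scale connection is metric, identify $N^a\propto n^a$ with the null normal via $\zeta^{ab}\nabla_b\sigma=n^a$, and obtain $q_{ab}$, the connection and the signature by restriction to $\Sigma^{\pm}$. The only differences are minor: you derive the decomposition of $H^{AB}$ and $\nabla_c\zeta^{ab}=0$ directly instead of citing the Čap--Gover/Flood--Gover results, you obtain the induced connection from total geodesy via $\nabla_a\nabla_b\sigma=-\sigma P_{ab}$ rather than the paper's equivalent check $i^*(\nabla_a N^b)=0$ (both rest on $\rho=\sigma\lambda^{-1}$ vanishing on $\Sigma$), and your systematic use of $|\lambda|^{1/2}$ is in fact slightly more careful than the paper's literal $\sqrt{\lambda}$ on $\Sigma^-$, yielding exactly the stated signature $(0,\mp,+\dots+)$.
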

\begin{proof}
 Recall that, from Lemma \ref{Lemma: Relations ISigma, XLambda, etc}, $\Phi_{AB} = D_A D_B \lambda$ at every point of $M$. On $M^{\pm}$ it follows from \cite{cap_holonomy_2014,Flood:2018aa} that, in the scale $\lambda$, the representative connection $\nabla_c$ in the projective class is the Levi-Civita connection of the metric $g_{ab}$ defined by
\begin{equation}\label{Proof: tractor metric on Sigma}
\Phi_{AB} = \lambda( Y_A Y_B + g_{ab} Z^a_A Z^b_B).
\end{equation}
What is more when $\Phi_{AB}X^AX^B = \pm|\lambda|\neq0$, since $\Phi_{AB}$ has signature $(--+...+)$, $g_{ab}$ has signature $(-\mp+...+)$.  
Now Lemma \ref{CovariantDerivativesDirectionn} implies, when evaluated in the scale $\lambda$ that $\rho=0$ on $\Sigma^{\pm}$ i.e. $i^*(I^A) = n^a W_a^A$. Since by Lemma \ref{Lemma: Relations ISigma, XLambda, etc} $i^*(D_A\sigma) = i^*(\Phi_{AB}I^B)$ we find, in the scale $\lambda$,
\begin{align*}
\sqrt{\lambda}\nabla_a\sigma = g_{ab} N^b
\end{align*}
where
\begin{equation}
N^a := i^*(\sqrt{\lambda} n^a)
\end{equation}
 is therefore the normal to $\Sigma$. Since $0= I^2= g_{ab} N^a N^b$ this normal is null and the pull back 
\begin{equation}
q_{ab} := i^*g_{ab}
\end{equation}
is degenerate with one dimensional kernel spanned by $N^a$ and signature $(0\mp+...+)$.

We already know that $\nabla_a$ is the Levi-Civita connection of $g_{ab}$. To conclude that it induces a connection on the tangent bundle of $\Sigma$ which satisfies,
\begin{align*}
\nabla_c q_{ab}&=0, & \nabla_c N^a&=0.
\end{align*}
we need only to verify that $i^*(\nabla_a N^b)=0$. However this follows from evaluating  \eqref{IParallel} in the scale $\lambda$. One concludes by noting that $\mathcal{L}_N q_{ab} = i^*(\mathcal{L}_N g_{ab}) = 2i^*(\nabla_{(a} N_{b)})=0$.
\end{proof}
Carrollian geometries have been extensively studied in e.g. \cite{ duval_carroll_2014,bergshoeff_carroll_2017,bekaert_connections_2018,morand_embedding_2020,Figueroa-OFarrill2022,figueroa-ofarrill_non-lorentzian_2022}.
As was shown in \cite{hartong_gauging_2015,Herfray:2021qmp} they define a unique normal Cartan geometry modelled on the homogeneous spaces $\mathrm{Ti}^n$ and $\mathrm{Spi}^n$ of \eqref{Boundary Homogeneous space}.

\begin{prop}(\cite{Herfray:2021qmp})\label{Proposition: Carollian Cartan geometries}
Carrollian geometries, as obtained on $\Sigma^{-}$ and $\Sigma^+$ by Theorem \ref{Theorem: induced geometry on Sigma}, define unique normal Cartan connections respectively modelled on the homogeneous spaces $\mathrm{Ti}^n$ and $\mathrm{Spi}^n$ defined by Eq~\eqref{Boundary Homogeneous space}).
\end{prop}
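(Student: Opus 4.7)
The strategy is to reduce the statement to the classification result for Carrollian Cartan geometries established in \cite{Herfray:2021qmp}, which asserts that a Carrollian structure $(N^a, q_{ab}, \nabla_c)$ with the compatibility conditions $q_{ab}N^b=0$, $\mathcal{L}_N q_{ab}=0$, $\nabla_c N^a=0$ and $\nabla_c q_{ab}=0$ canonically determines a unique normal Cartan geometry modelled on the relevant Klein pair, with the model singled out by the signature of the screen metric induced by $q_{ab}$. The task is therefore twofold: first, to confirm that the data produced by Theorem~\ref{Theorem: induced geometry on Sigma} satisfies exactly the axioms used in that classification, and second, to match the signature of the induced transverse metric with the homogeneous models $\mathrm{Ti}^n$ and $\mathrm{Spi}^n$ appearing in \eqref{Boundary Homogeneous space}.

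First I would verify the compatibility axioms directly. The conditions $q_{ab}N^b=0$ and the rank of $q_{ab}$ follow from the proof of Theorem~\ref{Theorem: induced geometry on Sigma} since $N^a$ spans the kernel of $i^*g_{ab}$. The two parallel conditions $\nabla_c N^a=0$ and $\nabla_c q_{ab}=0$ were already established there, and $\mathcal{L}_N q_{ab}=0$ follows by the short computation given at the end of that proof. The torsion-freeness of $\nabla_c$ is inherited from the torsion-freeness assumption that runs throughout the article. Together these are precisely the data of a (torsion-free, compatible) Carrollian connection in the sense of \cite{Herfray:2021qmp}.

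Next I would identify the correct homogeneous model on each piece of the boundary. The signatures $(0,-,+,\dots,+)$ on $\Sigma^-$ and $(0,+,+,\dots,+)$ on $\Sigma^+$ obtained in Theorem~\ref{Theorem: induced geometry on Sigma} determine the signature of the screen metric on the quotient $\Sigma^{\pm}/\langle N\rangle$: Lorentzian of signature $(1,n-2)$ in the first case and Riemannian of signature $(n-1)$ in the second. These are exactly the isotropy representations attached to the Klein pairs of $\mathrm{Ti}^n \simeq \mathrm{ISO}(1,n-1)/(\mathbb{R}^{n-1}\rtimes \mathrm{SO}(n-1))$ and $\mathrm{Spi}^n \simeq \mathrm{ISO}(1,n-1)/(\mathbb{R}^{n-1}\rtimes \mathrm{SO}(1,n-2))$ respectively, so the homogeneous models are pinned down by the signature.

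Finally, once the compatible Carrollian data is available and the homogeneous model is identified, the existence and uniqueness of the normal Cartan connection follows by invoking the main structural theorem of \cite{Herfray:2021qmp} (see also \cite{hartong_gauging_2015}), which essentially provides a Carrollian analogue of the standard prolongation/normalisation procedure. The main technical obstacle, which is carried out in that reference rather than here, is the formulation and verification of the correct \emph{normality} condition on the Cartan curvature: unlike the parabolic case, the Carrollian isotropy is not reductive in the usual parabolic sense, and selecting the right curvature normalisation (typically a trace-type condition on a suitable component of the torsion/curvature) is what makes the Cartan connection unique. In our setting no new work is required beyond checking that the input data is of the correct form, which is what the verification above accomplishes.
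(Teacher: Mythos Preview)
The paper does not actually supply a proof of this proposition: it is stated as a citation from \cite{Herfray:2021qmp} (and \cite{hartong_gauging_2015}) and simply invoked. Your proposal is therefore not so much a competing proof as a reasonable elaboration of why the cited result applies, and in spirit it is exactly what the paper intends: check that Theorem~\ref{Theorem: induced geometry on Sigma} produces a Carrollian structure of the right type, then quote the existence/uniqueness theorem for the normal Cartan connection from the reference.

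There is, however, a concrete error in your signature bookkeeping. Theorem~\ref{Theorem: induced geometry on Sigma} gives $q_{ab}$ signature $(0,\mp,+,\dots,+)$ on $\Sigma^{\pm}$, i.e.\ Riemannian screen $(0,+,\dots,+)$ on $\Sigma^{-}$ and Lorentzian screen $(0,-,+,\dots,+)$ on $\Sigma^{+}$; you have these swapped. You then compound this by matching a Lorentzian screen to $\mathrm{Ti}^n$, whose isotropy $\mathrm{SO}(n-1)$ is Riemannian, and a Riemannian screen to $\mathrm{Spi}^n$, whose isotropy $\mathrm{SO}(1,n-2)$ is Lorentzian. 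The two mistakes happen to cancel, so your final assignment $\Sigma^{-}\to\mathrm{Ti}^n$, $\Sigma^{+}\to\mathrm{Spi}^n$ agrees with the proposition, but the intermediate reasoning is wrong and should be corrected.
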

Without much surprise these Cartan geometries coincide with the induced tractor geometries given by Proposition \ref{Proposition: Induced tractor geometry on Sigma}.
\begin{prop}
	The tractor connections respectively induced on $\T_{\Sigma^-} \to \Sigma^-$ and $\T_{\Sigma^+} \to \Sigma^+$ coincides with the normal Cartan connections associated with the geometry of Theorem \ref{Theorem: induced geometry on Sigma}.
\end{prop}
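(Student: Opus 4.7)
The plan is to invoke the uniqueness clause in Proposition \ref{Proposition: Carollian Cartan geometries} after checking that the induced tractor geometry on $\Sigma^\pm$ furnishes a Cartan geometry modeled on $\mathrm{Ti}^n$ (resp.\ $\mathrm{Spi}^n$) whose underlying Carrollian structure coincides with that of Theorem \ref{Theorem: induced geometry on Sigma} and which satisfies the corresponding normality conditions. First, I would exhibit the relevant holonomy reduction: by Proposition \ref{Proposition: Induced tractor geometry on Sigma}, the parallel pair $(I^A,\Phi_{AB})$ on $\T_\Sigma$, together with the non-degeneracy of $\Phi_{AB}$ on the quotient $\T_\Sigma / I$, reduces the structure group of the projective tractor bundle to the stabiliser of this algebraic data inside $\text{PGL}(n+1)$, i.e.\ precisely the group appearing as the isotropy in $\mathrm{Ti}^n$ or $\mathrm{Spi}^n$ (depending on the signature of the induced bilinear form on $I^\circ / I$, which by Theorem \ref{Theorem: induced geometry on Sigma} is that of $q_{ab}$).

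Second, I would unpack the soldering and linear components of the induced connection in the scale $\lambda$. Using $i^*(I^A) = n^a W_a^A$ and $\Phi_{AB} = \lambda(Y_AY_B + g_{ab}Z^a_A Z^b_B)$ from the proof of Theorem \ref{Theorem: induced geometry on Sigma}, together with the splittings of Section \ref{Projective Tractor Calculus}, the tractor connection components on the reduced bundle precisely realise $N^a = \sqrt{|\lambda|}\,n^a\vert_\Sigma$ as the Carrollian vector field, $q_{ab} = i^*g_{ab}$ as the degenerate metric, and the pulled-back $\nabla_c$ as the compatible affine connection. The compatibility relations $\nabla_c N^a=0$, $\nabla_c q_{ab}=0$ are exactly \eqref{IParallel} and the restriction of the metricity of $\nabla$ in the $\lambda$-scale.

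The main obstacle is the third step: verifying that the induced Cartan connection satisfies the Carrollian normality condition. The remark following Proposition \ref{Proposition: Induced tractor geometry on Sigma} warns that the induced projective tractor connection is generally \emph{not} normal in the projective sense, so one cannot simply quote normality from $M$. Instead, one has to translate between the two normalisations: the Carrollian normality of \cite{hartong_gauging_2015,Herfray:2021qmp} amounts to torsion-freeness together with the vanishing of certain Kostant-type traces of the curvature in the $\mathfrak{iso}(1,n-1)$-valued components. Torsion-freeness is inherited from torsion-freeness of the ambient affine connection on $M$, and the trace conditions can be checked by contracting the ambient tractor curvature \eqref{TractorCurvature} with $I^A$ and $\Phi_{AB}$ and using Lemma \ref{Lemma: n^aF_a =0} together with the identities satisfied by $P_{ab}$ in the $\lambda$-scale (in particular $P_{ab}n^b=0$ on $\Sigma$, which is inherited from Lemma \ref{Lemma: PropertiesAdaptedScales}). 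Once these traces are shown to vanish, Proposition \ref{Proposition: Carollian Cartan geometries} applies and forces the induced connection to agree with the unique normal Cartan connection determined by the Carrollian data.
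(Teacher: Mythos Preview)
Your overall strategy coincides with the paper's: exhibit the holonomy reduction of the induced tractor bundle to the correct isotropy group using the parallel pair $(I^A,\Phi_{AB})$ in the scale $\lambda$, identify the underlying Carrollian data with that of Theorem \ref{Theorem: induced geometry on Sigma}, verify normality, and then appeal to the uniqueness in Proposition \ref{Proposition: Carollian Cartan geometries}.

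The difference lies entirely in the normality step. The paper's proof observes that for the Carrollian models $\mathrm{Ti}^n$ and $\mathrm{Spi}^n$ the normality condition of \cite{Herfray:2021qmp} \emph{reduces to torsion-freeness alone}; there are no further Kostant-type trace constraints to check. Torsion-freeness is then immediately inherited from the ambient projective tractor connection on $M$, and the proof is finished in one line. You instead posit additional trace conditions and attempt to dispatch them via Lemma \ref{Lemma: n^aF_a =0} and identities on $P_{ab}$. This is unnecessary, and also not obviously sufficient as written: Lemma \ref{Lemma: n^aF_a =0} controls contractions of the tractor curvature with $n^a$ in a form index, which is not the same as the internal traces one would need for a Kostant-type argument, and your appeal to Lemma \ref{Lemma: PropertiesAdaptedScales} is slightly off since $\lambda$ is a weight-$2$ density rather than an adapted scale in the sense of Definition \ref{AdaptedScales}. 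None of this is fatal, but it is extra work aimed at conditions that are simply absent for these models. The cleaner route is to note, as the paper does, that normality here is synonymous with torsion-freeness.
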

\begin{proof} In the proof of Theorem \ref{Theorem: induced geometry on Sigma} we already saw that, in the scale given by $\lambda$, the induced tractor pairing $\Phi_{AB}$ is in the diagonal form \eqref{Proof: tractor metric on Sigma} while $I^A$ is simply proportional to $n^a$. Moreover, the linear connection $\nabla_a$ induced by the tractor connection is metric compatible. It follows that the induced tractor connections define Cartan geometries modelled on $\mathcal{M}_{\pm1,0}$.
	
	 We thus only need to prove that the induced tractor connection is normal, which in this case just reduces to torsion-freeness. However it is straightforward to see that the induced tractor connection on $\Sigma$ is torsion-free as a result of the fact that the tractor connection on $M$ is.
\end{proof}

\subsection{The geometry of $\mathcal{H}$}

By definition $\mathcal{H}^{\pm}$ are obtained by taking the quotient of $\Sigma^{\pm}$ by the flow of $n^a$. What is more, by Corollary \ref{Corollary: Densities induced on Q} the boundary defining function $\sigma$ of $\Sigma$  descends to a boundary defining $\bar{\sigma}$ function for $\mathcal{H}$. 

By taking the corresponding quotient by $n^a$ and $I^A$, the short sequences of Proposition \ref{Proposition: short sequences of tractors on Sigma} then descends to
\begin{prop}\mbox{}
\begin{center}\label{Diagramm: tractor bundles along H}
	\begin{tikzcd}
	&0\arrow[d] & 0\arrow[d]\\&\mathcal{E}_{\mathcal{H}}(-1)\arrow[r,"\simeq"] \arrow[d,hook,"X"]& i^*\E(-1)\equiv\E(-1)|_{\mathcal{H}} \arrow[d,hook,"X"] \\ 0 \arrow[r]& \T_{\mathcal{H}}\simeq i^*(\tilde{I}_A)^{\circ} \arrow[r,hook] \arrow[d,two heads, "Z"] & i^*\T_{\Q} \simeq i^*(\tilde{\T}/\tilde{I}) \arrow[r,two heads,"\bar{D}_A\bar{\sigma}"] \arrow[d,two heads, "Z"]& C^\infty(\mathcal{H})\arrow[d,"\simeq"] \arrow[r]& 0
	\\0\arrow[r]& T\mathcal{H}(-1)\arrow[d]\arrow[r,hook]& i^*T\Q(-1)\arrow[d] \arrow[r,"\bar{\nabla}_a\bar{\sigma}", two heads]& C^\infty(\mathcal{H})\arrow[r] & 0\\&0&0
	\end{tikzcd}
\end{center}
\end{prop}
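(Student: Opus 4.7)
The plan is to mirror the argument of Proposition~\ref{Proposition: short sequences of tractors on Sigma} (proved there for the embedding $\Sigma \subset M$), but applied to the embedding $i:\mathcal{H}\hookrightarrow \Q$, and then to rephrase the resulting diagram using the identification $\T_{\Q}\simeq \tilde{\T}/\tilde{I}$ built in Section~\ref{section: Geometry of Q}.

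First, by Corollary~\ref{Corollary: Densities induced on Q}, $\bar{\sigma}$ is a nowhere-vanishing boundary-defining function for $\mathcal{H}$ inside $\Q$, and by Proposition~\ref{Proposition: TractorGeometryofQ} it satisfies $\bar{I}_A = \bar{D}_A\bar{\sigma}$ for the Thomas $D$-operator of $\Q$. The standard jet description of projective tractors on $\Q$ then reads $J^1\mathcal{E}_{\mathcal{H}}(1) = i^*(J^1\mathcal{E}_{\Q}(1))/\langle J^1\bar{\sigma}\rangle$, equivalently $(\T_{\mathcal{H}})^* = i^*(\T_{\Q})^*/\langle \bar{I}_A\rangle$. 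Dualising yields $\T_{\mathcal{H}}\simeq i^*(\bar{I}_A)^{\circ}\subset i^*\T_{\Q}$, which is the middle column of the diagram. Applying the same projection argument to the tangent bundle gives the bottom row, since $\bar{\nabla}_a\bar{\sigma}$ is a non-vanishing conormal to $\mathcal{H}$ in $\Q$.

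Next, I translate the statement into the extended bundle $\tilde{\T}$. By Proposition~\ref{Proposition: tractor isomorphism}, the annihilator $\tilde{I}^{\circ}\subset \tilde{\T}^*$ is canonically identified with $(\T_{\Q})^*$, and correspondingly $\tilde{\T}/\tilde{I}\simeq \T_{\Q}$. Under this identification the parallel cotractor $\bar{I}_A$ corresponds to the element denoted $\tilde{I}_A$ in the statement, so $(\bar{I}_A)^{\circ} \subset \T_{\Q}$ goes over to $(\tilde{I}_A)^{\circ} \subset \tilde{\T}/\tilde{I}$. This gives the desired rewriting $\T_{\mathcal{H}}\simeq i^*(\tilde{I}_A)^{\circ}$ and $i^*\T_{\Q}\simeq i^*(\tilde{\T}/\tilde{I})$ in the middle of the diagram.

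Finally, commutativity of the diagram and the top-left isomorphism of density bundles reduce to the restriction to $\mathcal{H}$ of the density and tractor correspondences established in Proposition~\ref{Proposition: equivalence of densities} and Proposition~\ref{Proposition: Isomorphism of tractors on Q/M}, applied in any adapted scale (Definition~\ref{AdaptedScales}) that restricts nontrivially to $\mathcal{H}$. I do not anticipate a serious geometric obstacle here: the content is essentially functorial, with all heavy lifting done in Sections~\ref{section: Geometry of Q} and~\ref{ssection: the geometry of Sigma}. The only point to verify carefully is that the identifications $\pi^*\mathcal{E}_{\Q}(w)\simeq \mathcal{E}_{M}(w)$ and $\pi^*\T_{\Q}\simeq \T_M/I$ are compatible with the pull-back along the boundary inclusion $\Sigma\hookrightarrow M$, in the sense that the quotient-then-restrict and restrict-then-quotient operations agree; this is immediate from the fact that $n^a$ is tangent to $\Sigma$ (Lemma~\ref{CovariantDerivativesDirectionn}), so that $\Sigma\to\mathcal{H}$ is an $\R$-bundle and the relevant flow commutes with both operations.
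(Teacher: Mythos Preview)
Your proof is correct and uses essentially the same ingredients as the paper: Proposition~\ref{Proposition: tractor isomorphism}, the boundary-defining property of $\bar{\sigma}$, and the identity $\bar{I}_A=\bar{D}_A\bar{\sigma}$ from Proposition~\ref{Proposition: TractorGeometryofQ}. The only stylistic difference is that the paper frames the result as the quotient of the diagram of Proposition~\ref{Proposition: short sequences of tractors on Sigma} by $n^a$ and $I^A$, whereas you re-run the jet argument of that proposition directly for the embedding $\mathcal{H}\hookrightarrow\Q$ and then invoke the identification $\T_{\Q}\simeq\tilde{\T}/\tilde{I}$; your final paragraph on the commutation of restriction and quotient is precisely what reconciles the two viewpoints.
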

\begin{proof}
	This follows from Proposition \ref{Proposition: tractor isomorphism}, Proposition \ref{Proposition: short sequences of tractors on Sigma} and the fact that $\tilde{I}_A = D_A \bar{\sigma}$.
\end{proof}

The relationship between the different tractor bundles that we discussed up to now are summarised by the following commutative diagram
\begin{figure}[h!]
	\centering
	\begin{tikzcd}
i^*(\mathcal{T}_{M}) \arrow[d, two heads] & \arrow[l, hook] \mathcal{T}_{\Sigma} \simeq i^*( I_A)^{\circ}  \arrow[d, two heads]\\ 
i^*\Big(\pi^*\T_{\Q}\Big) \simeq i^*(\mathcal{T}_{M}/I)  & \arrow[l, hook] \pi^*\T_{\mathcal{H}} \simeq i^*( I_A)^{\circ}/I.
\end{tikzcd}
	\caption{\label{Figure: The tractor bundles along the boundary} The tractor bundles along the boundary $\Sigma \to \mathcal{H}$.}
\end{figure}

From Proposition \ref{Proposition: connection on Q} the tractor connection on $\Sigma$ induces a tractor connection on $\mathcal{H}$. Now, by Theorem~\ref{Theorem: Projective compactification order 1}, $\mathcal{H}$ is the projective boundary of a projectively compact Einstein metric which, by \cite{Cap:2014ab}, also induces a tractor connection on $\mathcal{H}$. From the above relationship between the tractor bundle and our previous results these connections coincide and, by the results of \cite{Cap:2014ab} one has:

\begin{coro}\label{Proposition: Cartan geometries on H}
	The projective tractor connections induced on $\T_{\mathcal{H}^-} \to \mathcal{H}^-$ and $\T_{\mathcal{H}^+} \to \mathcal{H}^+$ inherit a holonomy reduction to $SO(n-2,1)$  and respectively define normal Cartan connections modelled on $H^{n-1}$ and $dS^{n-1}$. From \cite{Cap:2014ab} these correspond to projectively compact Einstein manifolds with projective boundary $\mathcal{H}^0$.
\end{coro}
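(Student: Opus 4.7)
The plan is to reduce the statement to a direct application of the Čap--Gover analysis \cite{Cap:2014ab} of the projective boundary of projectively compact order-1 Einstein manifolds, which is precisely the situation of $\mathcal{H}$ inside $\Q$ by Theorem~\ref{Theorem: Projective compactification order 1}. The inherited tractor data on $\T_\mathcal{H}$ will be shown to realise a non-degenerate parallel normal solution of the metrisability equation, whose degeneracy locus inside $\mathcal{H}$ is precisely $\mathcal{H}^0$, which then plays the role of the projective boundary for a subsequent order-2 projective compactification. The sign of $\bar\lambda$ distinguishes the two cases and labels the resulting homogeneous model.

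First, I would identify the induced tractor data. Using the diagram of Figure~\ref{Diagramm: tractor bundles along H} one has $\T_\mathcal{H} \simeq i^*(\bar I_A)^\circ / \bar I$. Since $\bar H^{AB}$ is parallel on $\T_\Q$ with $\bar I_A$ in its radical (Proposition~\ref{Proposition: TractorGeometryofQ}), it descends to a parallel symmetric tractor $\tilde H^{AB}$ on $\T_\mathcal{H}$, and $\bar\sigma$ descends to a boundary-defining density (Corollary~\ref{Corollary: Densities induced on Q}). The argument of Proposition~\ref{Proposition: normal tractor connection on Q}, transposed from the $M\to \Q$ quotient to the $\Sigma \to \mathcal{H}$ quotient, shows that the descended tractor connection on $\T_\mathcal{H}$ is the normal projective Cartan connection of the induced projective structure on $\mathcal{H}$, so that $\tilde H^{AB}$ satisfies the \emph{normal} metrisability equation in the sense of \eqref{DH=0}.

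Second, I would invoke \cite[Theorem 3.14]{Flood:2018aa} and the boundary theorems of \cite{Cap:2014ab}: a parallel normal tractor $\tilde H^{AB}$ on a projective manifold gives rise to a projectively compact Einstein metric on the open locus $\{\det \tilde H \neq 0\}$, with complement serving as its projective boundary. Using Lemma~\ref{Lemma: Relations ISigma, XLambda, etc} applied to $\bar\sigma, \bar\lambda$ on $\Q$ and then pulled back along $i : \mathcal{H} \hookrightarrow \Q$, one identifies the zero locus of $\det \tilde H$ with the zero locus of $\bar\lambda|_\mathcal{H}$, i.e.\ with $\mathcal{H}^0$. The sign of $\bar\lambda$ determines the signature and the sign of the scalar curvature of the resulting metric: $\bar\lambda < 0$ on $\mathcal{H}^-$ yields a Riemannian Einstein metric of negative scalar curvature (hence $H^{n-1}$), and $\bar\lambda > 0$ on $\mathcal{H}^+$ yields a Lorentzian Einstein metric of positive scalar curvature (hence $dS^{n-1}$). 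The pointwise stabiliser of the corresponding non-degenerate normal solution then provides the announced holonomy reduction to a group of type $\mathrm{SO}(n-2,1)$.

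The main obstacle I anticipate is establishing that the descended connection on $\T_\mathcal{H}$ is genuinely normal as a projective Cartan connection (and not merely the naive quotient of a normal one). The template is the proof of Proposition~\ref{Proposition: normal tractor connection on Q}: work in an adapted scale, use Corollary~\ref{SchoutenTensorMagic}-style identifications between the Schouten tensors of $\Q$ and $\mathcal{H}$, and exploit Lemma~\ref{Lemma: n^aF_a =0}-type vanishing of contractions of the tractor curvature with the distinguished null tractor, now with $\bar I$ replacing $I$. A secondary but important technicality is the signature bookkeeping through the two successive null quotients $H^{AB} \mapsto \bar H^{AB} \mapsto \tilde H^{AB}$, each of which strips off a $(1,1)$ block from the original $(2,n)$ pairing and is what ultimately yields the Lorentzian/Riemannian dichotomy along $\mathcal{H}^{\pm}$.
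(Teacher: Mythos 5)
Your overall route is the paper's: by Theorem~\ref{Theorem: Projective compactification order 1} the pair $(\Q_{\mathcal{O}},\mathcal{H})$ is precisely a projectively compact order-one Ricci-flat Einstein manifold together with its projective boundary, and the corollary is then read off from \cite{Cap:2014ab} once the connection obtained on $\T_{\mathcal{H}}$ by quotienting $\T_{\Sigma}$ is identified with the boundary tractor connection which \cite{Cap:2014ab} attaches to that compactification. The paper takes normality, the holonomy reduction, the models and the role of $\mathcal{H}^0$ directly from that identification, whereas you propose to re-prove normality by transposing Proposition~\ref{Proposition: normal tractor connection on Q} to the quotient $\Sigma\to\mathcal{H}$. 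Be aware that this transposition is not verbatim: the tractor connection induced on $\T_{\Sigma}$ is \emph{not} normal (see the remark following Proposition~\ref{Proposition: Induced tractor geometry on Sigma}), so the analogue of the adapted-scale/Schouten comparison has to be re-examined; as a plan it is admissible but it is genuinely more work than the paper's shortcut.

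Two of your concrete steps fail as written. First, the identification of the boundary: the descended pairing $\tilde H^{AB}$ on the rank-$n$ bundle $\T_{\mathcal{H}}$ is invertible \emph{everywhere} on $\mathcal{H}$, because the radical of $\bar H^{AB}$ is exactly the span of $\bar I_A$, which is what gets removed in passing to $\T_{\mathcal{H}}$; this invertibility \emph{is} the holonomy reduction. Hence $\{\det\tilde H\neq 0\}$ is all of $\mathcal{H}$, its complement is empty, and your appeal to \cite[Theorem 3.14]{Flood:2018aa} (the degenerate, order-one mechanism used for $\Q$ itself) is the wrong result here. What separates $\mathcal{H}^{\pm}$ from $\mathcal{H}^{0}$ is the sign, respectively vanishing, of $\bar\lambda$, i.e.\ of the contraction of the inverse $\tfrac{1}{2}D_AD_B\bar\lambda$ of $\tilde H$ with $X^AX^B$ (equivalently, by the algebraic lemma recalled at the end of Section~\ref{Projective Tractor Calculus}, the vanishing of $\detd$ of the induced metrisability solution, since $X^AX^B\Phi_{AB}=\det(H)\detd(\zeta)$); this is the order-two mechanism of \cite{Cap:2014ab}, which is exactly what the corollary quotes. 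Second, the signature bookkeeping: only \emph{one} hyperbolic block is stripped in going from the $(2,n)$ pairing on $\T_M$ to $\tilde H$ on $\T_{\mathcal{H}}$ — the second quotient merely factors out the radical already present in $\bar H$ — so the parallel tractor metric on $\T_{\mathcal{H}^{\pm}}$ is Lorentzian of signature $(1,n-1)$ on a rank-$n$ bundle, its stabiliser being the $n$-variable Lorentz group, in agreement with the displayed models $H^{n-1}=SO(1,n-1)/SO(n-1)$ and $dS^{n-1}=SO(1,n-1)/SO(1,n-2)$ of \eqref{Boundary Homogeneous space} (so the group in the statement should be read accordingly). Your \enquote{two successive null quotients, each stripping a $(1,1)$ block} would leave a definite form of rank $n-2$, which could never produce $dS^{n-1}$; the Riemannian/Lorentzian dichotomy on $\mathcal{H}^{\pm}$ comes instead from restricting the $(1,n-1)$ form to the orthogonal complement of $X^A$, according to the sign of $\bar\lambda$ — that part of your argument is the correct one, and it is incompatible with your block count.
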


\subsection{The geometry of $\mathcal{I}\to \mathcal{H}$}

According to Corollary \ref{Corollary: Densities induced on Q}, $\tilde{\sigma}\in \Gamma(\mathcal{E}_{\Q}(1))$ is a boundary defining function for $\mathcal{H} = \mathcal{H}^{-}\cup\mathcal{H}^{0}\cup \mathcal{H}^{+}$. This implies the short sequence (dual to the second line of \eqref{Diagramm: tractor bundles along H}):
\begin{center}
	\begin{tikzcd}
C^{\infty}(\mathcal{H}) \arrow[r,hook,"J^1 \bar{\sigma}"]& i^*(\T_{\Q})^* \simeq i^*(J^1 \mathcal{E}_{\Q}(1)) \arrow[r,two heads,"\Pi"]  & \frac{i^*(J^1 \mathcal{E}_{\Q}(1))}{ i^* (J^1\bar{\sigma})} \simeq J^1 \mathcal{E}_{\mathcal{H}}(1) \simeq (\T_{\mathcal{H}})^*.
	\end{tikzcd}
\end{center}

 What is more, from Corollary \ref{Corollary: Densities induced on Q},  $\sqrt{\bar{\lambda}}\in \Gamma(\mathcal{E}_{\mathcal{H^{\pm}}}(1))$ is a nowhere vanishing smooth density on $\mathcal{H}^{\pm}$. This defines a section
 
 \begin{equation}\label{Jlambda section}
 	\phi:  \mathcal{H}^{\pm} \xrightarrow{J^1 \sqrt{\bar{\lambda}}} J^1 \mathcal{E}_{\mathcal{H}}(1) \simeq \frac{i^*(J^1 \mathcal{E}_{\Q}(1))}{ i^* (J^1\bar{\sigma})}.
 \end{equation}
 
 However, there is no canonical way to extend $\sqrt{\bar{\lambda}}$ as a density in a neighbourhood of $\mathcal{H}$ in $\Q$ and therefore no preferred way to extend $J^1 \sqrt{\bar{\lambda}}$ to a section of $i^*(J^1 \mathcal{E}_{\Q}(1))$, that we will view as a line bundle over $J^1\E_{\mathcal{H}}(1)$. This fact is the basis of the following definition.
\begin{defi}
The line bundle $\mathcal{I}^{\pm} \to \mathcal{H}^{\pm}$ is defined as the pull-back bundle
\begin{equation}
\phi^* \Big( i^*(J^1 \mathcal{E}_{Q}(1)) \Big) \to \mathcal{H}^{\pm}.
\end{equation}
\end{defi}
Sections of $\mathcal{I}^{\pm}$ therefore correspond to choosing an extension of $J^1 \sqrt{\bar{\lambda}}$ as a section of $i^*(J^1 \mathcal{E}_{\Q}(1))$. This is naturally a $\mathbb{R}$-principal bundle: if one notes $J^1\sqrt{\Lambda}$ the 1-jet extension of $\sqrt{\bar{\lambda}}$ in $i^*(J^1 \mathcal{E}_{\Q}(1))$ corresponding to a point in $\mathcal{I}^{\pm}$, then the $\mathbb{R}$-action is given by
\begin{equation}\label{R action on I}
\begin{array}{ccc}
\mathbb{R}\,\times\, \mathcal{I}^{\pm}  & \to & \mathcal{I}^{\pm}\\[0.4em]
(k\,,\, J^1\sqrt{\Lambda}) & \mapsto &  J^1\sqrt{\Lambda} + k \,J^1\sigma.
\end{array}
\end{equation}

This bundle is intrinsic to $\mathcal{H}^{\pm}$ as a submanifold of $\Q$ in the sense that it only depends on data coming from the projective compactification. Nevertheless it also nicely ties up with the geometry of $M$.
\begin{prop}\label{Proposition: scrI.Sigma equivalence}
$\mathcal{I}^{\pm}$ is canonically isomorphic to $\Sigma^{\pm}$. 

Furthermore, the action \eqref{R action on I} coincides, through this isomorphism, to the $\mathbb{R}$-action induced on $\Sigma^{\pm}$ by the flow of $\sqrt{\lambda}n^a$.
\end{prop}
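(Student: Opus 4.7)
The plan is to produce an explicit smooth $\mathbb{R}$-equivariant bundle map $\Psi:\Sigma^\pm\to\mathcal{I}^\pm$ covering the identity on $\mathcal{H}^\pm$. Since both are $\mathbb{R}$-principal bundles over $\mathcal{H}^\pm$, equivariance will automatically promote $\Psi$ to a bundle isomorphism, yielding both the canonical identification and the statement about the $\mathbb{R}$-action.

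For $p\in\Sigma^\pm$, let $\Psi(p):=(J^1\sqrt{|\lambda|})_p\in(\mathcal{T}^*_M)_p$, viewed as an element of $J^1_{\pi(p)}\mathcal{E}_{\Q}(1)$ via the tractor isomorphism $(I^\circ)_p\simeq(\mathcal{T}^*_{\Q})_{\pi(p)}$ of Proposition~\ref{Proposition: Isomorphism of tractors on Q/M}. Two points must be verified for $\Psi(p)$ to lie in the affine fibre of $\mathcal{I}^\pm$ over $\pi(p)$. First, from $2\sqrt{|\lambda|}\,D_A\sqrt{|\lambda|}=\pm D_A\lambda=\pm 2\Phi_{AB}X^B$ (an immediate consequence of Lemma~\ref{Lemma: Relations ISigma, XLambda, etc}) one obtains $I^AD_A\sqrt{|\lambda|}=\pm\sigma/\sqrt{|\lambda|}$, which vanishes on $\Sigma^\pm$; hence $(J^1\sqrt{|\lambda|})_p\in(I^\circ)_p$. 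Second, the class of $\Psi(p)$ modulo $J^1\bar{\sigma}$ equals $\phi(\pi(p))=(J^1\sqrt{|\bar{\lambda}|})_{\pi(p)}$, because the $D_n$-closedness of $\sqrt{|\lambda|}$ on $\Sigma$ just established forces it to descend, along $\Sigma$, to $\sqrt{|\bar{\lambda}|}$ on $\mathcal{H}$ (Proposition~\ref{Proposition: equivalence of densities}).

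Equivariance is the crucial step. Using $\sigma=\pi^*\bar{\sigma}$ and $I_A=D_A\sigma$, the $\mathbb{R}$-action on $\mathcal{I}^\pm$ translates under the tractor identification into the affine action $(J^1\sqrt{|\lambda|})_p\mapsto(J^1\sqrt{|\lambda|})_p+k\,I_A|_p$ in $(I^\circ)_p$. Since $I_A$ is parallel, this action commutes with the parallel transport along fibres of $\pi$ that underlies the identifications $(I^\circ)_p\simeq(\mathcal{T}^*_{\Q})_{\pi(p)}$ at varying $p\in\pi^{-1}(q)$ (cf.\ Corollary~\ref{Proposition: explicit tractor isomorphism}), and equivariance of $\Psi$ reduces to the infinitesimal identity $\nabla_N D_A\sqrt{|\lambda|}|_\Sigma=\pm I_A|_\Sigma$ with $N^a=\sqrt{|\lambda|}n^a$.

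This infinitesimal identity is a direct computation in an adapted scale (Lemma~\ref{Lemma: PropertiesAdaptedScales}): differentiating $D_A\sqrt{|\lambda|}=\nabla_a\sqrt{|\lambda|}Z^a_A+\sqrt{|\lambda|}Y_A$ yields $\nabla_cD_A\sqrt{|\lambda|}=(\nabla_c\nabla_a\sqrt{|\lambda|}+P_{ca}\sqrt{|\lambda|})Z^a_A$; the Schouten term dies upon contraction with $N^c$ because $P_{ab}n^b=0$, and the Hessian is handled by commuting derivatives (legal since the density bundle is flat in a special scale) and using $\nabla_a n^b=0$ together with the identity $N^a\nabla_a\sqrt{|\lambda|}=\pm\sigma$, itself an immediate corollary of Lemma~\ref{CovariantDerivativesDirectionn}. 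On $\Sigma$, where $\sigma=0$, one is left with $\nabla_a\sigma$, hence with $I_A$. The true obstacle is thus not this calculation but the careful bookkeeping of identifications: one must recognise that the affine structure on $\mathcal{I}^\pm$ parametrised by $J^1\bar{\sigma}$ is matched via $\Psi$ with an affine structure parametrised by the \emph{parallel} cotractor $I_A$, so that the infinitesimal relation integrates, without curvature corrections, to the claimed finite equivariance.
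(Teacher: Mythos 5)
Your proof is correct and rests on the same two pillars as the paper's own argument: the identification of $i^*(J^1\mathcal{E}_{\Q}(1))$ with cotractor sections in $I^{\circ}$ that are covariantly constant along the fibres of $\Sigma^{\pm}\to\mathcal{H}^{\pm}$, and the key identity $\nabla_{N}D_A\sqrt{\lambda}\,\big|_{\Sigma}=I_A$ (the paper computes $\nabla_{n}D_A\sqrt{\lambda}\,\big|_{\Sigma}=\lambda^{-1/2}I_A$ directly with Thomas $D$-operators and Lemma~\ref{Lemma: Relations ISigma, XLambda, etc}; your adapted-scale computation yields the same result). The packaging, however, is genuinely different: the paper parametrises points of $\mathcal{I}^{\pm}$ as $D_A\sqrt{\lambda}+\chi I_A$ with $\nabla_N\chi=-1$, solves this in adapted coordinates as $\chi=\xi(y)-s$, matches zero sets of $\chi$ with sections of $\Sigma^{\pm}\to\mathcal{H}^{\pm}$, and reads the $\R$-action off the coordinate formula; you instead define the pointwise map $p\mapsto (J^1\sqrt{|\lambda|})_p$, check it lands in the correct affine subbundle, and promote the infinitesimal identity to finite equivariance (legitimate, since $I_A$ is parallel and the transport is along the one-dimensional fibres), concluding via the fact that an equivariant morphism of principal $\R$-bundles covering the identity is an isomorphism. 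Your route buys a coordinate-free, manifestly canonical isomorphism and a structural treatment of the $\R$-action; the paper's buys brevity and an explicit dictionary between sections of $\mathcal{I}^{\pm}$ and sections of $\Sigma^{\pm}\to\mathcal{H}^{\pm}$. One presentational point: your justification that the class of $\Psi(p)$ modulo $J^1\bar{\sigma}$ equals $\phi(\pi(p))$ invokes only the descent of the density $\sqrt{|\lambda|}$ along $\Sigma$, whereas what is needed is that $[D_A\sqrt{\lambda}]$ modulo $I_A$ is covariantly constant along the fibres and is identified with $J^1\sqrt{\bar{\lambda}}$ --- this is exactly the infinitesimal identity you establish two sentences later (and the same ``easily proved'' identification the paper also leaves implicit), so it should be stated, or forward-referenced, before that step; this is an ordering fix, not a gap.
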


\begin{proof}
In the previous sections we found that points in the fibre $i^*(J^1\mathcal{E}_{\Q}(1))_x$ at $x\in \mathcal{H}^{\pm}$ correspond to covariantly constant sections of $(I^A)^{\circ}_{\pi^{-1}(x)} \subset (\T_{M})^*_{\pi^{-1}(x)}$ while points in the fibre $J^1\mathcal{E}_{\mathcal{H}}(1)_x$ correspond to covariantly constant sections of $\big((I^A)^{\circ}/I_A\big)_{\pi^{-1}(x)}$. In particular, since we have
\begin{equation}
\nabla_{n} D_{A}\sqrt{\lambda} = I^B D_{B}\Big( \tfrac{1}{2} \lambda^{-\frac{1}{2}}D_{A}\lambda\Big)=  -\tfrac{\lambda^{-\frac{3}{2}}}{4}I^B D_{B}\lambda \;D_{A}\lambda+\lambda^{-\frac{1}{2}}I^B D_{B}D_{A}\lambda
\end{equation}
which implies (by making use of Lemma \ref{Lemma: Relations ISigma, XLambda, etc})
\begin{equation}
\Big(\nabla_{n} D_{A}\sqrt{\lambda}\Big)\Big|_{\Sigma} =  0+\lambda^{-\frac{1}{2}}I_A
\end{equation}
it follows that $D_{A}\sqrt{\lambda}\big|_{\Sigma}$ defines a covariantly constant section of $i^*\big((I^{A})^{\circ}/I_A\big) \simeq (\T_{\mathcal{H}})^*$. One can easily prove that this section is identified with the section \eqref{Jlambda section}.

From this remark and the definition of $\mathcal{I}^{\pm}$ we thus find that sections of $\mathcal{I}^{\pm}$ are identified with sections of $i^*(I^{A})^{\circ} \subset i^*(\T_{M})^*$ of the form
 \begin{equation}\label{proof: scrI.Sigma equivalence chi def}
 D_{A}\sqrt{\lambda} + \chi I_A,
 \end{equation} where $\chi$ is a function on $\Sigma$, and which are covariantly constant along the fibres of $\Sigma^{\pm} \to \mathcal{H}^{\pm}$. This last condition is equivalent to requiring that $\chi$ satisfies
\begin{equation}\label{proof: scrI.Sigma equivalence edp}
\nabla_N \chi = -1
\end{equation}
where $N^a = \lambda^{\frac{1}{2}}n^a$. In a coordinate system $(s,y)$ adapted to $\Sigma^{\pm} \to \mathcal{H}^{\pm}$, and such that $N^a= \partial_s$, solutions are of the form
\begin{equation}\label{proof: scrI.Sigma coordinate expression for chi}
\chi = \xi(y) - s.
\end{equation}
It follows that the set of points where $\chi$ vanishes defines a section $y \mapsto (\xi(y), y)$ of $\Sigma^{\pm} \to \mathcal{H}^{\pm}$. Such sections are thus in one-to-one correspondence with solutions of \eqref{proof: scrI.Sigma equivalence edp} and therefore in one-to-one correspondence with sections of $\mathcal{I}^{\pm}$. 

Finally comparing \eqref{R action on I} with \eqref{proof: scrI.Sigma equivalence chi def} one sees that the $\mathbb{R}$-action $\eqref{R action on I}$ amounts to adding $k\in \mathbb{R}$ to $\chi$. The coordinate expression \eqref{proof: scrI.Sigma coordinate expression for chi} then shows that this is equivalent to the action of the flow of $N^a= \partial_s$.
\end{proof}

\begin{defi}
	The $\mathbb{R}$-action allows to parallel transport 1-jets $J^1\mathcal{E}_{\mathcal{I}^{\pm}}(1)$ on $\mathcal{I}^{\pm}$, we note \begin{equation*}
	(\bm{T}_{\mathcal{H}})^* \to \mathcal{H}
	\end{equation*} the resulting bundle on $\mathcal{H}$. 
\end{defi}

\begin{prop}\label{Proposition: Cartan geometries on I}	
$\bm{T}_{\mathcal{H}} \to \mathcal{H}^{\mp}$ coincides with the bundle $\tilde{\T}_{\mathcal{H}} \to \mathcal{H}^{\mp}$ obtained by taking the quotient of $\T_{\Sigma} \to \Sigma^{\mp}$ by the flow of $n^a$ (as in section \eqref{sssection: Tractors of Q}). In particular it is equipped with a normal tractor connection modelled on the homogeneous space $\mathrm{Ti}^n$, $\mathrm{Spi}^n$  (see \eqref{Boundary Homogeneous space}). 
\end{prop}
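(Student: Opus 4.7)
The plan is to identify both $(\bm{T}_{\mathcal{H}})^*$ and $(\tilde{\T}_{\mathcal{H}})^*$ with the quotient of $(\T_{\Sigma^{\pm}})^*$ by parallel transport along the flow of $n^a$, and then dualise. The key input is Proposition~\ref{Proposition: scrI.Sigma equivalence}, which identifies $\mathcal{I}^{\pm}$ with $\Sigma^{\pm}$ in a manner that carries the defining $\R$-action on $\mathcal{I}^{\pm}$ to the flow of $N^a = \sqrt{\lambda}\,n^a$ on $\Sigma^{\pm}$.

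First, I would note that under this diffeomorphism the density bundles are canonically identified, $\mathcal{E}_{\mathcal{I}^{\pm}}(1)\simeq\mathcal{E}_{\Sigma^{\pm}}(1)$, and consequently so are their 1-jets: $J^1\mathcal{E}_{\mathcal{I}^{\pm}}(1)\simeq J^1\mathcal{E}_{\Sigma^{\pm}}(1)\simeq(\T_{\Sigma^{\pm}})^*$, the last isomorphism being the standard description of the projective cotractor bundle. Unwinding the representation of points of $\mathcal{I}^{\pm}$ as cotractor fields $D_A\sqrt{\lambda}+\chi I_A$ that are parallel along fibres (with $\nabla_N\chi=-1$), the $\R$-action~\eqref{R action on I}, which sends $\chi\mapsto\chi+k$, coincides precisely with the connection-wise parallel-transport action on cotractors along the flow of $N$; indeed $I_A$ is itself parallel and $D_A\sqrt{\lambda}+\chi I_A$ is parallel along the fibre by construction.

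Taking quotients, $(\bm{T}_{\mathcal{H}})^*$ is therefore the quotient of $(\T_{\Sigma^{\pm}})^*$ by parallel transport along the flow of $n^a$, which by the construction of Section~\ref{sssection: Tractors of Q} (applied to $\Sigma^{\pm}\to\mathcal{H}^{\pm}$ in place of $M\to\Q$) is exactly $(\tilde{\T}_{\mathcal{H}})^*$; dualising gives $\bm{T}_{\mathcal{H}}\simeq\tilde{\T}_{\mathcal{H}}$. The tractor connection on $\tilde{\T}_{\mathcal{H}}$ then descends from $\T_{\Sigma^{\pm}}$ by the same argument as in Proposition~\ref{Proposition: InducedConnectionOnT*Q}, the vanishing $n^a\Omega_{ab}{}^A{}_B=0$ (Lemma~\ref{Lemma: n^aF_a =0}) restricting to $\Sigma^{\pm}$ so as to guarantee that the induced connection on $\tilde{\T}_{\mathcal{H}}$ is well-defined. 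By Proposition~\ref{Proposition: Carollian Cartan geometries} the tractor connection on $\T_{\Sigma^{\pm}}$ is the normal Cartan connection of the Carrollian geometry modelled on $\mathrm{Ti}^n$, respectively $\mathrm{Spi}^n$, so the descended connection inherits the same model.

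The principal technical point requiring care is the second paragraph: matching the algebraic $\R$-action on 1-jets (adding $k\,J^1\bar{\sigma}$) with the connection-wise parallel transport of cotractors along $N^a$. Once the identifications from Proposition~\ref{Proposition: scrI.Sigma equivalence} are in place this reduces to the two facts that $I_A$ is $\nabla$-parallel and that $\nabla_N(D_A\sqrt{\lambda}+\chi I_A)=0$ along each fibre, both of which were essentially established in the proof of that proposition; all remaining steps are essentially formal transcriptions of the quotient construction of Section~\ref{sssection: Tractors of Q} and an invocation of Proposition~\ref{Proposition: Carollian Cartan geometries}.
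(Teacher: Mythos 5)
Your overall route is the paper's: the paper's own proof is essentially the one-line remark that the statement follows from the definition of $\bm{T}_{\mathcal{H}}$ together with Proposition~\ref{Proposition: scrI.Sigma equivalence}, and your identification $J^1\mathcal{E}_{\mathcal{I}^{\pm}}(1)\simeq J^1\mathcal{E}_{\Sigma^{\pm}}(1)\simeq(\T_{\Sigma^{\pm}})^*$, the quotient construction, the descent of the connection via $n^a\Omega_{ab}{}^{A}{}_{B}=0$ (Lemma~\ref{Lemma: n^aF_a =0}, as in Proposition~\ref{Proposition: InducedConnectionOnT*Q}) and the appeal to Proposition~\ref{Proposition: Carollian Cartan geometries} together with the proposition that follows it are all in line with what the authors intend.

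The weak point is exactly the step you flag as the crux, and your justification does not settle it. The two facts you cite --- that $I_A$ is parallel and that $D_A\sqrt{\lambda}+\chi I_A$ is covariantly constant along the fibres --- concern the \emph{ambient} cotractor representatives of points of $\mathcal{I}^{\pm}$, i.e.\ sections of $i^*(\T_{M})^*$; they only re-derive the equivariance part of Proposition~\ref{Proposition: scrI.Sigma equivalence} (the action \eqref{R action on I} corresponds to the flow of $\sqrt{\lambda}\,n^a$ on points of $\Sigma^{\pm}$) and say nothing about the induced action on $J^1\mathcal{E}_{\mathcal{I}^{\pm}}(1)\simeq(\T_{\Sigma^{\pm}})^*$, which is the bundle actually being quotiented. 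Moreover, if by ``the $\R$-action on $1$-jets'' you mean the natural push-forward lift of the flow to $J^1\mathcal{E}_{\Sigma^{\pm}}(1)$, the asserted coincidence with tractor parallel transport is false: working in the scale $\sqrt{\lambda}$ the two lifts of the flow differ by a nontrivial nilpotent endomorphism whose $Y_A$-component on $\xi Y_A+\mu_a Z^a_A$ is $N^a\mu_a$, so the two quotients are not literally the same and your ``therefore'' would need a genuinely different argument. The reading that makes the proposition (and the paper's one-line proof) go through is that the definition of $(\bm{T}_{\mathcal{H}})^*$ already transports the $1$-jets along the fibres by \emph{parallel transport} for the tractor connection carried over from $\T_{\Sigma^{\pm}}$ through the isomorphism of Proposition~\ref{Proposition: scrI.Sigma equivalence}; under that reading your middle paragraph is superfluous rather than wrong, and the remainder of your argument matches the intended proof.
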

\begin{proof}
It follows from the definition and Proposition \ref{Proposition: scrI.Sigma equivalence}.
\end{proof}
From the diagram in Figure \ref{Figure: The tractor bundles along the boundary} we then find:
\begin{lemm}
	One has a canonical isomorphism
	\begin{equation*}
	\bm{T}_{\mathcal{H}} \simeq i^*(\tilde{I}_A)^{\circ} \subset i^* \tilde{\mathcal{T}}.
	\end{equation*}
\end{lemm}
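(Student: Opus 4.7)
The plan is to assemble the identifications established in preceding propositions. By Proposition~\ref{Proposition: Cartan geometries on I}, $\bm{T}_\mathcal{H}$ is identified with $\tilde{\T}_\mathcal{H}$, itself defined as the quotient of $\T_\Sigma$ by parallel transport along integral curves of $n^a$. Likewise, $\tilde{\T} \to \Q$ is defined as the quotient of $\T_M$ by the $\R$-action of parallel transport along $n^a$, so that $i^*\tilde{\T}$ is the quotient of $i^*\T_M$ by the restricted action. Via Proposition~\ref{Proposition: short sequences of tractors on Sigma}, $\T_\Sigma$ is canonically identified with the subbundle $i^*(I_A)^\circ$ of $i^*\T_M$.

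The key observation is that, since $I_A = D_A\sigma$ is parallel on $M$, the subbundle $(I_A)^\circ \subset \T_M$ is preserved under parallel transport, and in particular under the $\R$-action defining $\tilde{\T}$. Hence the restricted $\R$-action on $i^*(I_A)^\circ$ agrees, under the identification $\T_\Sigma \simeq i^*(I_A)^\circ$, with the $\R$-action used to define $\bm{T}_\mathcal{H}$. Taking quotients of both sides of the inclusion $i^*(I_A)^\circ \hookrightarrow i^*\T_M$ then produces a natural embedding
\[ \bm{T}_\mathcal{H} \simeq \tilde{\T}_\mathcal{H} \hookrightarrow i^*\tilde{\T}. \]
To identify its image, we use Lemma~\ref{correspondence_sections_T_tilde}, which identifies fibres of $\tilde{\T}$ over $x \in \Q$ with parallel sections of $\T_M$ along $\pi^{-1}(x)$; under this identification the image of $\bm{T}_\mathcal{H}$ consists of those classes whose representatives annihilate $I_A$. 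Since $I_A$ descends under the quotient to the parallel cotractor $\tilde{I}_A \in \tilde{\T}^*$, this is precisely $i^*(\tilde{I}_A)^\circ$.

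The main subtle point is that the $\R$-action on $\T_\Sigma$ entering the construction of $\bm{T}_\mathcal{H}$ must truly coincide, under the isomorphism $\T_\Sigma \simeq i^*(I_A)^\circ$, with the restriction of the ambient $\R$-action on $i^*\T_M$. This is where Proposition~\ref{Proposition: Induced tractor geometry on Sigma} intervenes: it records that the tractor connection on $\T_\Sigma$ is induced by restriction of the ambient tractor connection to the parallel subbundle $(I_A)^\circ$, so that parallel transport along $n^a$ (which is tangent to $\Sigma$) agrees in the two pictures. Once this compatibility is secured, the remainder of the argument reduces to a formal quotient diagram chase.
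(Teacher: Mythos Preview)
Your argument is correct and follows essentially the same route as the paper, which simply points to the commutative diagram in Figure~\ref{Figure: The tractor bundles along the boundary} and leaves the details implicit. You have spelled out the diagram chase explicitly: identify $\bm{T}_\mathcal{H}$ with $\tilde{\T}_\mathcal{H}$ via Proposition~\ref{Proposition: Cartan geometries on I}, use $\T_\Sigma \simeq i^*(I_A)^\circ$ from Proposition~\ref{Proposition: short sequences of tractors on Sigma}, observe that this parallel subbundle is stable under the $\R$-action so that the quotient embeds in $i^*\tilde{\T}$, and recognise the image as the annihilator of the descended cotractor $\tilde{I}_A$.
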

\noindent Since, on $\Q_{\mathcal{O}}$ and by Proposition \ref{Proposition: ambient tractors extend conformal},  $\tilde{\mathcal{T}}$ is isomorphic to the conformal tractor bundle,  the preceding Lemma means that $\bm{T}_{\mathcal{H}}$ is in a certain sense the extension by continuity of this conformal tractor bundle along the projective boundary. Similarly the Cartan connection induced on these bundles by Proposition \ref{Proposition: Cartan geometries on I}	must be an extension by continuity of the conformal tractor connection.

The importance of these last results stems from the fact that normal tractor connections modelled on the homogeneous space $\mathrm{Ti}^n$ and $\mathrm{Spi}^n$ are not unique, see e.g. \cite{Herfray:2021qmp}, not even after fixing a flat normal tractor connection on $\mathcal{H}^{\pm}$. The resulting freedom encodes asymptotic data at space/time-like infinity induced by the projective compactification.

\section{Asymptotic symmetries of projectively compact Einstein manifolds of order 1}\label{section: Asymptotic symmetries of projectively compact Einstein manifolds}

Let $(\Q, \bar{\nabla}, \bar{H}^{AB}, \bar{I}_{A} )$ be a projectively compact order 1 Einstein manifold (i.e. as obtained in the previous sections\footnote{We have not shown in this article that all such manifolds arise in this way. There is, however, a sense in which this is true; it turns out that all such manifolds admit a canonical non-effective Cartan geometry of the type conjectured in Section~\ref{AdditionalTractorBundle}. This result will appear in~\cite{BorthwickHerfray2}. }).  We will denote by $Sym(\Q)$ the space of diffeomorphisms $\Phi : \Q \to \Q$ preserving the boundary $\Phi(\mathcal{H}^{\pm}) = \mathcal{H}^{\pm}$ and such that
\begin{align}
\left(\Phi^* \bar{H}^{AB}\right) \big|_{\mathcal{H}^{\pm}} & = \bar{H}^{AB}\big|_{\mathcal{H}^{\pm}}, & \left(\Phi^* \bar{I}_{A}\right) \big|_{\mathcal{H}^{\pm}} & = \bar{I}_{A}\big|_{\mathcal{H}^{\pm}}.
\end{align}
In particular, these diffeomorphisms must induce an isometry $\phi : \mathcal{H}^{\pm} \to  \mathcal{H}^{\pm}$ of $(\mathcal{H}^{\pm} , \lambda \zeta^{ab})$ and therefore preserve the induced Cartan geometry given by Proposition \ref{Proposition: Cartan geometries on H} (but will not in general preserve the Cartan geometry induced on $\mathcal{I}^{\pm}$ by Proposition \ref{Proposition: Cartan geometries on I}).
We will note $Sym_0(\Q)$ the space of diffeomorphisms $\Phi : \Q \to \Q$ which act trivially on the boundary up to first order $i^*(d\Phi) = Id$. 

\begin{defi}
We define the set of asymptotic symmetries of a projectively compact Einstein manifold $(\Q, \bar{\nabla}, \bar{H}^{AB}, \bar{I}_{A})$ to be the quotient $Asym(\Q) = Sym(\Q)/Sym_0(\Q)$.
\end{defi}

\begin{theo}
The action of the asymptotic symmetries on $\mathcal{I}^{\pm}$ coincides with the action of the principal bundle automorphisms of $\mathcal{I}^{\pm}$ preserving the Einstein metric on $\mathcal{H}^{\pm}$ and we thus have
\begin{align*}
Sym(Q)/Sym_0(Q) &\simeq C^{\infty}(\mathcal{H}^{\pm}) \rtimes ISO(\lambda\zeta^{ab})
\end{align*}

In particular, the group of asymptotic symmetries naturally acts on the normal Cartan connections induced on $\mathcal{I}^{\pm}$ by Proposition \ref{Proposition: Cartan geometries on I}.
\end{theo}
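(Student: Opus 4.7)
The plan is to construct a group homomorphism $\Psi : Sym(\Q) \to \mathrm{Aut}(\mathcal{I}^{\pm})$ into the group of $\R$-equivariant diffeomorphisms of the principal bundle $\mathcal{I}^{\pm}$ covering isometries of $(\mathcal{H}^{\pm}, \lambda\zeta^{ab})$, to show $\ker \Psi = Sym_0(\Q)$, and to establish surjectivity. The semi-direct product structure then follows from the standard exact sequence $0 \to C^\infty(\mathcal{H}^{\pm}) \to \mathrm{Aut}(\mathcal{I}^{\pm}) \to ISO(\lambda\zeta^{ab}) \to 1$ whose kernel (the gauge subgroup) is parametrised by the fiber-translations of \eqref{R action on I} and which splits by choice of a section of $\mathcal{I}^{\pm}$.

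First I would check that any $\Phi \in Sym(\Q)$ induces a principal bundle automorphism of $\mathcal{I}^{\pm}$. By Corollary \ref{Proposition: Cartan geometries on H} and the condition $\Phi^* \bar{H}^{AB}|_{\mathcal{H}^{\pm}} = \bar{H}^{AB}|_{\mathcal{H}^{\pm}}$, the restriction $\phi := \Phi|_{\mathcal{H}^{\pm}}$ is an isometry of $(\mathcal{H}^{\pm}, \lambda\zeta^{ab})$. Since $\bar{I}_A = D_A\bar\sigma$, the condition $\Phi^*\bar{I}_A|_{\mathcal{H}^{\pm}} = \bar{I}_A|_{\mathcal{H}^{\pm}}$ ensures that pullback on $i^*(J^1 \mathcal{E}_{\Q}(1))$ preserves the subline $\R\cdot J^1\bar\sigma$, so it descends to a smooth map of the pullback bundle $\mathcal{I}^{\pm}$ defined via \eqref{Jlambda section}. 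Equivariance with respect to the $\R$-action \eqref{R action on I} is immediate from linearity of 1-jet pullback.

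For injectivity, note that if $\Phi \in Sym_0(\Q)$ then $\phi = \mathrm{Id}$ and $i^*(d\Phi) = \mathrm{Id}$ together imply that $\Phi$ preserves every 1-jet $J^1\sqrt{\Lambda}$ of an extension of $\sqrt{\bar\lambda}$, hence acts trivially on each fiber of $\mathcal{I}^{\pm}$. Conversely, if $\Psi(\Phi)$ is the identity of $\mathcal{I}^{\pm}$, then $\phi = \mathrm{Id}$ and the normal component of $d\Phi$ along $\mathcal{H}^{\pm}$ is forced to be trivial by equivariance; combined with tangential triviality this gives $i^*(d\Phi) = \mathrm{Id}$ and so $\Phi \in Sym_0(\Q)$.

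The main obstacle is surjectivity, which I would treat by lifting the two factors separately. For the $ISO(\lambda\zeta^{ab})$-factor, the normal form for projectively compact order one Einstein metrics near the boundary established in \cite{Cap:2014ab} provides adapted collar coordinates in which $\bar\sigma$ is a defining function and the tractor data takes a canonical form; a boundary isometry $\phi$ then extends in these coordinates to a diffeomorphism of a collar neighbourhood preserving $\bar\sigma$, $\bar{H}^{AB}$ and $\bar{I}_A$ to first order, and is glued to the identity away from $\mathcal{H}^{\pm}$ via a cutoff. For the fiber-translation subgroup, any $f \in C^\infty(\mathcal{H}^{\pm})$ is realised by a diffeomorphism supported in such a collar which fixes $\mathcal{H}^{\pm}$ pointwise and modifies the normal 1-jet of $\sqrt{\bar\lambda}$ by precisely $f \cdot J^1\bar\sigma$, concretely by a second-order reparametrisation of the $\bar\sigma$-coordinate with profile $f$ tapered away from the boundary. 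Composing these two types of lifts realises the full principal bundle automorphism group, and the action on the Cartan connections of Proposition \ref{Proposition: Cartan geometries on I} then follows from the fact that principal bundle automorphisms act naturally on tractor bundles and their connections over $\mathcal{H}^{\pm}$.
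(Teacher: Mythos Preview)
Your approach is essentially the same as the paper's: both construct the map $Sym(\Q)\to\mathrm{Aut}(\mathcal{I}^{\pm})$ by letting $\Phi$ act on $i^*(J^1\mathcal{E}_{\Q}(1))$ and checking compatibility with the $\R$-action \eqref{R action on I}, then argue that this induces a bijection onto the principal bundle automorphisms covering isometries of $(\mathcal{H}^{\pm},\lambda\zeta^{ab})$. The paper compresses your injectivity and surjectivity steps into the single observation that a principal bundle automorphism of $\mathcal{I}^{\pm}$ \emph{is} precisely the datum of the first-order jet of $\Phi$ along $\mathcal{H}^{\pm}$, hence determines and is determined by the class in $Sym(\Q)/Sym_0(\Q)$; your collar-neighbourhood construction makes the surjectivity direction more concrete than the paper does.

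One point you pass over that the paper makes explicit: to know that pullback by $\Phi$ really sends the fibre of $\mathcal{I}^{\pm}$ over $J^1\sqrt{\bar\lambda}$ to the fibre over $J^1\sqrt{\bar\lambda}$ (and not merely to some other fibre of $i^*(J^1\mathcal{E}_{\Q}(1))\to J^1\mathcal{E}_{\mathcal{H}}(1)$), you need that $\phi$ preserves $D_A\sqrt{\bar\lambda}$ on $\mathcal{H}^{\pm}$. The paper obtains this from the algebraic fact that $\bar{H}^{AB}$ restricted to $\mathcal{T}_{\mathcal{H}}$ has inverse $\tfrac{1}{2}D_AD_B\bar\lambda$ and $D_A\bar\lambda=(D_AD_B\bar\lambda)X^B$, so preservation of $i^*\bar{H}^{AB}$ forces preservation of $D_A\sqrt{\bar\lambda}$. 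Your argument implicitly relies on the same conclusion (via $\phi$ being an isometry, hence preserving $\bar\lambda$), but it would strengthen your write-up to state this step.
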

\begin{proof}
	
Let $\Phi:\Q \to \Q$ be an asymptotic symmetry, we note $\phi: \mathcal{H} \to \mathcal{H}$ the induced action on $\mathcal{H}$. Since $\Phi$ preserves $i^*(\bar{H}^{AB})$ it follows that $\phi$ must be an isometry of $\lambda \zeta^{ab}$.

We recall that along the boundary $\mathcal{H}$ the restriction of the tractor metric $\bar{H}^{AB}$ to $\mathcal{T}_\mathcal{H}$ is invertible of inverse $\tfrac{1}{2}D_AD_B\bar{\lambda}$ and that $D_A\bar{\lambda} = (D_AD_B\bar{\lambda}) X^B$. It follows that an asymptotic symmetry must preserve both $i^*I_A$ and $D_A\sqrt{\bar{\lambda}}$. 

Let $J^1_x\sqrt{\Lambda}$ be a point in $\mathcal{I}^{\pm}$ i.e. a point in $J^1_x \mathcal{E}_{\Q}(1)$ whose quotient by $J^1_x\bar{\sigma}$ coincides with  $J^1_x\sqrt{\lambda}$. 
By the previous remark, the image of this point by an asymptotic symmetry must of the form
\begin{equation}
\Phi^* (J^1\sqrt{\Lambda})_x = J^1_{x}\sqrt{\Lambda} + \chi J^1_{x} \sigma
\end{equation} 
where $\chi$ is some function that might at this stage depend on the points of the fibres of $J^1_x\sqrt{\Lambda} \to J^1_x\sqrt{\lambda}$. To conclude we need to prove that symmetry is compatible with the $\mathbb{R}$ action $J^1\sqrt{\Lambda} \mapsto J^1_x\sqrt{\Lambda} + k J^1_{x} \sigma$, $k\in \mathbb{R}$. However,
\begin{equation}
\Phi^* (J^1_x\sqrt{\Lambda} + k J^1_{x} \sigma ) = J^1_{\phi(x)}\sqrt{\Lambda} + (\chi + k)  J^1_{\phi(x)} \sigma
\end{equation} 
follows from the fact that a symmetry must preserve $i^*(I_A)$.  Therefore, the asymptotic symmetry induces an automorphism of the bundle. One concludes the proof by observing that such automorphism completely fixes the first order jets of the symmetry and is thus equivalent to an asymptotic symmetry.
\end{proof}

\printbibliography[title=Bibliography]

 \end{document}